\title{Strain-Gradient Plasticity as the $\Gamma$-Limit of a Nonlinear Dislocation Energy with Mixed Growth}
\author{Janusz Ginster}
\date{}
\newcommand{\R}{\mathbb{R}}
\newcommand{\1}{\mathbf{1}}
\newcommand{\lm}{\left\|}
\renewcommand{\rm}{\right\|}
\newcommand{\del}{\partial}
\theoremstyle{definition}
\newtheorem{definition}{Definition}[section]
\newtheorem{remark}{Remark}[section]
\theoremstyle{theorem}
\newtheorem{theorem}{Theorem}[section]
\newtheorem*{theorem*}{Theorem}
\newtheorem{proposition}{Proposition}[section]
\newtheorem{lemma}{Lemma}[section]
\providecommand{\keywords}[1]{\textbf{\textit{Keywords: \,}} #1}
\providecommand{\msc}[1]{\textbf{\textit{2000 Mathematics Subject Classification: \,}} #1}
\begin{document}

\maketitle

\begin{abstract}
  In this paper a we derive by means of $\Gamma$-convergence a macroscopic strain-gradient plasticity from a semi-discrete model for dislocations in an infinite cylindrical crystal. In contrast to existing work, we consider an energy with subquadratic growth close to the dislocations. This allows to treat the stored elastic energy without the need to introduce an ad-hoc cut-off radius. As the main tool to prove a complementing compactness statement, we present a generalized version of the geometric rigidity result for fields with non-vanishing $\operatorname{curl}$. A main ingredient is a fine decomposition result for $L^1$-functions whose divergence is in certain critical Sobolev spaces.
\end{abstract}

\noindent
\keywords{$\Gamma$-convergence, plasticity, dislocations, geometric rigidity estimate} \\
%
\msc{49J45, 58K45, 74C05}

\section{Introduction}

The permanent deformation of metals is caused by several defects of the atomic structure. 
Dislocations are topological defects of the metal lattice, \cite{Vo07,Po34,Ta34}, which play an important role in the effect of plastic slip i.e., the relative slip of atom layers which result in a permanent deformation of the metal lattice. Additionally to phenomenologically derived models (see, for example, \cite{FlHu93,KoCuOr02,Gu02} and references therein), there has been extensive research in the mathematical community to derive macroscopic plasticity models from microscopic or mesoscopic dislocation models (see \cite{GaLePo10, Po07, ScZe12, MuScZe14, CeLe05, dLGaPo12, MuGa06, MuGa05, CoGaMu11, CoGaMu16, CoGaMa15, CoGaOr15}).

Many of these dislocation models are formulated in a semi-discrete setting which means that the dislocations are modeled by discrete quantities whereas the elastic strains are averaged.
We consider the situation of an infinite cylindrical crystal with straight parallel edge dislocations.
The most natural setting for this situation is plane elasticity i.e., we restrict our analysis to a plane $\Omega$ which is orthogonal to the dislocation lines. The relevant quantities are then the in-plane components of the strain whereas the dislocations are characterized by their intersection with this plane and the Burgers vector, a vector-valued quantity describing the difference in relative slip, \cite{Bu95}. The presence of dislocations is then expressed by the incompatibility of the elastic strain $\beta: \Omega \rightarrow \R^{2\times 2}$, precisely
\begin{equation} \label{eq: disdens}
\operatorname{curl } \beta = \sum_i \xi_i \delta_{x_i},
\end{equation}
where $x_i$ are the intersection points between $\Omega$ and the dislocation lines and $\xi_i$ the corresponding Burgers vectors, see \cite{Ny53}.
In the linearized setting the stored elastic energy of an admissible planar deformation $\beta$ and a corresponding dislocation density $\mu = \operatorname{curl }\beta$ is given by
\[
\int_{\Omega_{\varepsilon}(\mu)} \frac12\mathcal{C}\beta :\beta \, dx,
\]
where $\mathcal{C} \in \R^{2\times2\times2\times2}$ is an elastic tensor and $\Omega_{\varepsilon}(\mu)$ is the domain given by $\Omega$ after removing discs of radius $\varepsilon$, the so--called core regions, whose size is comparable to the interatomic distance (see also \cite{GaLePo10, CeLe05, MuScZe14}).
This regularization is necessary as strains satisfying \eqref{eq: disdens} typically behave like $\frac1r$ around the dislocations.  Hence, the incompatibility \eqref{eq: disdens} leads to a logarithmic divergence of the quadratic energy around the dislocations.\\
In \cite{GaLePo10}, the authors argue that in the situation of approximately $|\log \varepsilon|$ dislocations the dislocation density and a corresponding elastic strain are of the same order. 
Precisely, they show that the suitably rescaled energy $\Gamma$-converges to a strain-gradient model (see, for example, \cite{FlHu01} and references therein) of the form
\begin{equation}\label{eq: straingrad}
\int_{\Omega} \frac12\mathcal{C} \beta : \beta \, dx + \int_{\Omega} \varphi\left(\frac{d\mu}{d|\mu|}\right) \, d\mu,
\end{equation}
where $\operatorname{curl }\beta = \mu$ and $\varphi$ is the limit self-energy per unit dislocation.
The analogue result for a nonlinear, rotationally invariant energy density $W(\beta) \sim \operatorname{dist}(\beta,SO(2))^2$ was established in \cite{MuScZe14}.
Note that in both papers the authors assume the separateness of the dislocations on an intermediate scale.
First results which do not need this additional assumption were established in \cite{dLGaPo12} in the subcritical regime with finitely many dislocations and in \cite{Gi17} in the regime with $|\log \varepsilon|$ dislocations.
\\
Additionally, it would be desirable to have an energetic description without the need to introduce the core-regions around the dislocations.
It is mainly caused by the use of a quadratic energy density.
At the same time, the use of a linearized elastic energy is justified a few atoms away from the dislocations as the distortion induced by a single dislocation is rather small far away from the dislocation.  
Within the radius of a few atoms around the dislocation it is at least discussible. 
In \cite{ScZe12} the authors consider a nonlinear elastic energy which behaves approximately like $\operatorname{dist}(\beta,SO(2))^2$ as long as $\beta$ is not too large, and like $|\beta|^p$, for some $1<p<2$, if $\beta$ is large, in particular close to the dislocations.
For this growth of the energy density, the contribution inside the cores is finite for a typical strain since $\int_{B_{\varepsilon}} \frac1{|x|^p} \,dx < \infty$. \\
In a regime with finitely many dislocations at given points $x_1,\dots,x_N \in \Omega$ with Burgers vectors $\varepsilon b_1, \dots, \varepsilon b_N$, encoded in the dislocation density $\mu_{\varepsilon} = \sum_{i=1}^N \varepsilon b_i \delta_{x_i}$, the authors prove that the suitably rescaled stored elastic energy $\int_{\Omega} W(\beta) \, dx$ $\Gamma$-converges to
\[
\int_{\Omega} \frac12\mathcal{C} \beta : \beta \, dx + \sum_{i=1}^N \psi(R^T b_i),
\]
where $\mathcal{C} = \frac{\del^2 W}{\del^2 F} W(Id)$, $\operatorname{curl }\beta = 0$, $R$ is a global rotation, which is the footprint of the rotational invariance of the energy, and $\psi$ the self energy per dislocation with a given Burgers vector.
Note that in this regime the limit variables $\beta$ and $\mu$ are completely decoupled, see also \cite{GaLePo10} and \cite{dLGaPo12}. \\
Our contribution will be to study a model with mixed growth as above in the critical regime with $|\log \varepsilon|$ dislocations and derive a strain-gradient energy similar to the one in \eqref{eq: straingrad}, Theorem \ref{theorem: critical}.
Mathematically, the transition from finitely many to infinitely many dislocations is far from trivial.
To ensure compactness, in \cite{GaLePo10} and \cite{MuScZe14} the authors develop generalizations of the classical Korn's inequality, \cite{Ko09}, and its nonlinear counterpart, \cite{FrJaMu02}, for fields which do not have a vanishing $\operatorname{curl}$.
Generalizations to dimensions higher than 2 can be found in \cite{LaLu17}. 
Analogously, we prove in our setting the following, Theorem \ref{theorem: generalizedrigidity}. 
Let $\Omega \subseteq \R^2$ be a simply connected, bounded domain with Lipschitz boundary. 
Then for every $\beta \in L^p(\Omega;\R^{2\times2})$ such that $\operatorname{curl} \beta$ is a bounded vector-valued Radon measure there exists a rotation $R$ such that
\begin{align*}
&\int_{\Omega} \min\{|\beta - R|^2,|\beta-R|\}^p\} \, dx \\ \leq &C \left( \int_{\Omega} \min\{\operatorname{dist}(\beta,SO(2))^2,\operatorname{dist}(\beta,SO(2))^p\} \, dx + |\operatorname{curl }\beta|(\Omega)^2 \right).
\end{align*}
If $\beta$ is a gradient then this estimates reduces to the geometric rigidity result in \cite{MuPa13}.
This estimate allows us to control the rotational invariance of the energy in order to obtain a compactness result for a sequence of suitably rescaled elastic strains which induce a uniformly bounded rescaled elastic energy, Theorem \ref{prop: compactness}. \\
The proof of the generalized (nonlinear) Korn's inequality in \cite{GaLePo10} and \cite{MuScZe14} rely on a fine estimate due to Bourgain and Br\'ezis (see \cite{BoBr03,BoBr07} and also \cite{BrVS07}). 
It states that an $L^1$-function in two dimensions, whose divergence is in $H^{-2}$, is already in $H^{-1}$ and
\[
\| f\|_{H^{-1}} \leq C \left( \| f \|_{L^1} + \| \operatorname{div }f\|_{H^{-2}} \right).
\]
For our mixed-growth situation we prove a corresponding result, namely we show that an $L^1$-function whose divergence is in the space $H^{-2} + W^{-2,p}$, for $1<p<2$, belongs to the space $H^{-1} + W^{-1,p}$.
Corresponding estimates hold, Theorem \ref{thm: bourgainbrezis}.\\ \newline
The paper is organized as follows. 
In Subsection \ref{sec: settingmixed} we introduce notation and the mathematical setting of the problem. 
The main results are presented in Subsection \ref{sec: mainresults}.
In Section \ref{sec: bourgainbrezis} we prove the generalized Bourgain-Br\'ezis decomposition result which we use in Section \ref{sec: generalizedrgidity} to show the generalized geometric rigidity result in the mixed growth case.
Finally, the proof of the $\Gamma$-convergence result can be found in Section \ref{sec: mixed}.

\subsection{Setting of the Problem}\label{sec: settingmixed}
Let $\Omega \subseteq \R^2$ be a simply-connected, bounded domain with Lipschitz boundary representing the cross section of an infinite cylindrical crystal.
The set of (normalized) minimal Burgers vectors for the given crystal is denoted by $S = \{b_1, b_2\}$ for two linearly independent vectors $b_1,b_2 \in \R^2$.
Moreover, set
\begin{equation*}
 \mathbb{S} = \operatorname{span}_{\mathbb{Z}} S = \{ \lambda_1 b_1 + \lambda_2 b_2: \lambda_1,\lambda_2 \in \mathbb{Z}\}
\end{equation*}
to be the the set of (renormalized) admissible Burgers vectors.

\noindent Let $\varepsilon  > 0$ the interatomic distance for the given crystal.
We define the set of admissible dislocation densities as a subset of the $\R^2$-valued Radon measures  $\mathcal{M}(\Omega;\R^2)$, namely
\begin{align*}
  X_{\varepsilon} = \bigg\{ &\mu \in \mathcal{M}(\Omega;\R^2): \mu = \sum_{i=1}^M \varepsilon \xi_i \delta_{x_i}, \, M \in \mathbb{N}, B_{\rho_{\varepsilon}}(x_i) \subseteq \Omega, \, |x_j - x_k| \geq 2 \rho_{\varepsilon}
  \\ &\text{ for } j \neq k, \, 0\neq\xi_i \in \mathbb{S} \bigg\},
 \end{align*}
where we assume that $\rho_{\varepsilon}$ satisfies
\begin{enumerate}
 \item $\lim_{\varepsilon \to 0} \rho_{\varepsilon} / \varepsilon^s = \infty$ for all fixed $s \in (0,1)$ and
 \item $\lim_{\varepsilon \to 0} |\log \varepsilon| \rho_{\varepsilon}^2 = 0$.
\end{enumerate}
This means that we assume the dislocations to be separated on an intermediate scale $\varepsilon \ll \rho_{\varepsilon} \to 0$. \\
Furthermore, we define the set of admissible strains generating $\mu \in X_{\varepsilon}$ by
\begin{equation}\label{eq: strains}
 \mathcal{A}\mathcal{S}_{\varepsilon}(\mu) = \left\{ \beta \in L^p(\Omega,\R^{2\times 2}): \operatorname{curl} \beta = \mu  \text{ in the sense of distributions} \right\}.
\end{equation}
We denote by $SO(2) = \{R \in \R^{2\times2}: R^T R = R R^T = Id, \, \operatorname{det}(R) > 0\}$ the set of rotations and by $|F| = \sqrt{\operatorname{tr}(F^T F)}$ the usual Euclidean norm for $F \in \R^{n\times n}$. \\ \newline
\noindent The energy density $W: \R^{2 \times 2} \rightarrow [0,\infty)$ satisfies the usual assumptions of nonlinear elasticity:
\begin{enumerate}
 \item $W \in C^0(\R^{2\times2})$ and $W \in C^2$ in a neighbourhood of $SO(2)$; \label{item: property1}
 \item stress-free reference configuration: $W(Id) = 0$; \label{item: property2}
 \item frame indifference: $W(RF) = W(F)$ for all $F \in \R^{2 \times 2}$ and $R \in SO(2)$. \label{item: property3}
\end{enumerate}
In addition, we assume that $W$ satisfies the following growth condition:
\begin{enumerate}\setcounter{enumi}{3} 
 \item there exists $\mathbf{1 < p < 2}$ and $0 < c \leq C$ such that for every $F \in \R^{2 \times 2}$ it holds 
 \begin{align}\label{item: property4} 
  c \big(\operatorname{dist}(F,SO(2))^2 \wedge & \operatorname{dist}(F,SO(2))^p\big) \\ \leq &W(F) \leq C \left(\operatorname{dist} (F,SO(2))^2 \wedge \operatorname{dist}(F,SO(2))^p\right). \nonumber
 \end{align} 
\end{enumerate}
Here, $\operatorname{dist}(F,SO(2)) = \min_{S \in SO(2)} |F - S|$ and for $a,b \in \R$ we write $a \wedge b = \min\{a,b\}$. \\ \newline
Finally, we define the critically rescaled energy for $\varepsilon >0$ by
\begin{equation} \label{definitition: energy}
E_{\varepsilon}(\mu,\beta) = \begin{cases}
                   \frac{1}{\varepsilon^2 |\log \varepsilon|^2} \int_{\Omega} W(\beta) \, dx &\text{ if } (\mu,\beta) \in X_{\varepsilon} \times \mathcal{A}\mathcal{S}_{\varepsilon}(\mu), \\
                   + \infty &\text{ else in } \mathcal{M}(\Omega;\R^2) \times L^p(\Omega;\R^{2\times2}).
                  \end{cases}
\end{equation}
We note here that the rescaling by $\varepsilon^2 |\log \varepsilon|^2$ corresponds to a system with $|\log \varepsilon|$ dislocations (see \cite{GaLePo10}). 
This is the only regime in which the strains and the dislocation densities are of the same order and therefore lead to a strain-gradient energy in the limit.\\
The topology which we will use for our $\Gamma$-convergence result is the following.
\begin{definition}\label{def: convcrit}
Let $\varepsilon \to 0$.
 We say that a sequence $(\mu_{\varepsilon},\beta_{\varepsilon}) \subseteq \mathcal{M}(\Omega;\R^2) \times L^p(\Omega;\R^{2\times2})$ converges to a triplet 
 $(\mu,\beta,R) \in  \mathcal{M}(\Omega;\R^2) \times L^p(\Omega;\R^{2\times2}) \times SO(2)$ if there exists a sequence $(R_{\varepsilon})_{\varepsilon} \subseteq SO(2)$ such that
 \begin{equation}
  \frac{\mu_{\varepsilon}}{\varepsilon |\log \varepsilon|} \stackrel{*}{\rightharpoonup} \mu \text{ in } \mathcal{M}(\Omega;\R^2),
 \end{equation}
\begin{equation}
 \frac{R_{\varepsilon}^T \beta_{\varepsilon} - Id}{\varepsilon |\log \varepsilon|} \rightharpoonup \beta \text{ in } L^p(\Omega;\R^{2\times 2}), \text{ and } R_{\varepsilon} \to R.
\end{equation}
\end{definition}
This definition will be justified by the compactness result, Theorem \ref{prop: compactness}.

\subsection{Main Results}\label{sec: mainresults}
In this paper, we prove the following $\Gamma$-convergence result.

\begin{theorem} \label{theorem: critical}
 The energy functionals $E_{\varepsilon}$ defined as in \eqref{definitition: energy} $\Gamma$-converge with respect to the notion of convergence given in Definition \ref{def: convcrit} to the functional $E^{crit}$ 
 defined on $\mathcal{M}(\Omega;\R^2) \times L^p(\Omega; \R^{2 \times 2}) \times SO(2)$ as
 \begin{equation*}
  E^{crit}(\mu,\beta,R) = \begin{cases}
                    \frac12 \int_{\Omega} \mathcal{C} \beta : \beta \,dx + \int_{\Omega} \varphi\left(R, \frac{d \mu}{d |\mu|}\right) d|\mu| &\text{if } \mu \in H^{-1}(\Omega;\R^2) \cap \mathcal{M}(\Omega;\R^2), \\ &\beta \in L^2(\Omega; \R^{2 \times 2}),
                    \operatorname{curl }\beta = R^T \mu, \\
                    +\infty &\text{otherwise },
                   \end{cases}
 \end{equation*}
where $\mathcal{C} = \frac{\del^2 W}{\del^2 F}(Id)$. 
The function $\varphi$ is the relaxed self-energy density per unit dislocation. 
It will be defined in \eqref{definition: phi}.
\end{theorem}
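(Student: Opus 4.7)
The plan is to establish the $\Gamma$-convergence in the usual two steps, a liminf inequality and the construction of a recovery sequence, relying on the compactness result (Theorem \ref{prop: compactness}) to guarantee that every sequence of bounded rescaled energy admits a limit triplet $(\mu,\beta,R)$ in the sense of Definition \ref{def: convcrit}. The role of the classical geometric rigidity of \cite{FrJaMu02} used in the quadratic case \cite{MuScZe14} is played here by the generalized rigidity of Theorem \ref{theorem: generalizedrigidity}, which is tailored to the mixed $2/p$ growth of $W$ and removes the need for an ad-hoc cut-off around the dislocations.

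For the liminf inequality I fix a sequence $(\mu_{\varepsilon},\beta_{\varepsilon})$ converging to $(\mu,\beta,R)$ with $\liminf E_{\varepsilon} < +\infty$ and write $R_{\varepsilon}^T\beta_{\varepsilon} = Id + \varepsilon|\log\varepsilon|\,\eta_{\varepsilon}$, so that $\eta_{\varepsilon} \rightharpoonup \beta$ in $L^p(\Omega;\R^{2\times2})$. I split $\Omega$ into a good set $G_{\varepsilon} = \{\varepsilon|\log\varepsilon||\eta_{\varepsilon}| < \delta\}$ and its complement for a small parameter $\delta > 0$. On $G_{\varepsilon}$, frame indifference and a Taylor expansion of $W$ at $Id$ give, up to $o(\varepsilon^2|\log\varepsilon|^2)$, the quadratic density $\tfrac12 \mathcal{C}\eta_{\varepsilon}^{\mathrm{sym}}:\eta_{\varepsilon}^{\mathrm{sym}}$; standard lower semicontinuity of convex integrals then produces $\tfrac12 \int_{\Omega} \mathcal{C}\beta:\beta\,dx$. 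The bad set has measure $o(1)$ by the $L^p$ bound on $\eta_{\varepsilon}$ and its energetic contribution is controlled by the $p$-growth of $W$. To recover the self-energy, I localize around each dislocation $x_i$ on an annulus $B_{s_{\varepsilon}}(x_i)\setminus B_{\varepsilon}(x_i)$ with $\varepsilon \ll s_{\varepsilon} \ll \rho_{\varepsilon}$, apply the cell-problem lower bound defining $\varphi$, and pass to the limit by combining the weak$^*$ convergence $\mu_{\varepsilon}/(\varepsilon|\log\varepsilon|) \stackrel{\ast}{\rightharpoonup} \mu$ with a standard measure-theoretic localization to obtain $\int_{\Omega} \varphi(R, d\mu/d|\mu|)\,d|\mu|$.

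For the upper bound I first construct a recovery sequence when $\mu = \sum_{j=1}^N \xi_j \delta_{x_j}$ is an atomic measure supported on finitely many well separated points with $\xi_j \in \mathbb{S}$ and $\beta$ is smooth off these points with $\operatorname{curl}\beta = R^T\mu$. Around each $x_j$, on the annulus $B_{\rho_{\varepsilon}}(x_j)\setminus B_{\varepsilon}(x_j)$, I glue in an almost-optimal profile achieving $\varphi(R,\xi_j/|\xi_j|)$ from the cell problem, and outside these annuli I take $\beta_{\varepsilon} := R\,(Id + \varepsilon|\log\varepsilon|\,\widetilde\beta)$ where $\widetilde\beta$ is a smooth truncation of $\beta$ with the correct curl. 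The interfaces are matched using an extension that preserves the curl constraint, while the contribution on the cores $B_{\varepsilon}(x_j)$ is kept of the right order thanks to the subquadratic $p$-growth; in contrast to the quadratic case, no cut-off is needed. A diagonal argument together with the density, in $H^{-1}\cap \mathcal{M}(\Omega;\R^2)$, of finite atomic measures with coefficients in $\mathbb{S}$, combined with a simultaneous correction of $\beta$ via the elliptic equation $\operatorname{curl}\beta = R^T\mu$, handles the general case.

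The main technical obstacle I anticipate is the careful decoupling, in the liminf step, of the bulk quadratic energy from the near-dislocation self-energy. In the purely quadratic setting of \cite{GaLePo10,MuScZe14} this separation is built in by the ad-hoc cut-off $\Omega_{\varepsilon}(\mu)$, whereas here one must control the transition between the quadratic bulk regime and the core regime where the strain behaves like $1/|x|$. Pushing the truncated rigidity estimate of Theorem \ref{theorem: generalizedrigidity} down to scales comparable to $\varepsilon$, handling the cross terms between the two growth regimes, and verifying that the cell-problem definition of $\varphi$ is consistent with both the lower bound and the recovery construction are the most delicate points of the proof.
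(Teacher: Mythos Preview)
Your liminf outline is close to what the paper does: split into a region far from the dislocations, Taylor-expand there using a good/bad set truncation of $\eta_\varepsilon$, and obtain the self-energy from dyadic annuli around the cores.  One point you gloss over is that the good-set argument must be performed on $\Omega\setminus\bigcup_i B_{\rho_{\varepsilon}}(x_i)$, not on all of $\Omega$, so that the bulk and self-energy contributions are disjoint; otherwise you double-count the near-core energy.  The paper carries this out carefully in Proposition~\ref{prop: liminf}, and your sketch can be fixed along those lines.

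The genuine gap is in your limsup strategy.  You propose to build the recovery sequence first for limit data of the form $\mu=\sum_{j=1}^N \xi_j\delta_{x_j}$ and then appeal to the ``density, in $H^{-1}\cap\mathcal M(\Omega;\R^2)$, of finite atomic measures with coefficients in $\mathbb S$''.  In two dimensions a Dirac mass is \emph{not} in $H^{-1}$; it lies only in $W^{-1,p}$ for $p<2$.  Hence for every atomic $\mu$ one has $E^{crit}(\mu,\beta,R)=+\infty$ (equivalently, there is no $\beta\in L^2$ with $\operatorname{curl}\beta=R^T\mu$), so this class is useless as a dense family in the energy.  Concretely, if $\beta$ is ``smooth off the points with $\operatorname{curl}\beta=R^T\mu$'' as you write, then $\beta$ behaves like $1/|x-x_j|$ near each $x_j$ and is not in $L^2$.

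The paper (Proposition~\ref{prop: limsup}) proceeds in the opposite direction: the dense class consists of \emph{absolutely continuous, locally constant} measures $\mu=\sum_l \xi^l\,\mathbf 1_{\Omega^l}\,dx$, for which $\beta\in L^2$ with the correct curl is available.  For $\mu=\xi\,dx$ one scatters $\sim|\log\varepsilon|$ dislocations on a grid of spacing $r_\varepsilon\sim|\log\varepsilon|^{-1/2}$ (this is how a single Dirac in $X_\varepsilon$ arises at level $\varepsilon$, not in the limit), glues in the explicit $1/|x|$ profiles on $B_{r_\varepsilon}(x_i)$, and corrects the curl mismatch by an auxiliary $L^2$-small field obtained from an elliptic problem.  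The general case is then reached by the mollification/approximation argument of \cite[Theorem~12, Step~3]{GaLePo10}, which is continuous for $E^{crit}$.  You should rebuild your upper bound along these lines; the atomic-limit route cannot work.
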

Note that the limit energy is the same as also found in \cite{GaLePo10} and \cite{MuScZe14}.
We complement the $\Gamma$-convergence result with a corresponding compactness statement.

\begin{theorem}[Compactness] \label{prop: compactness}
 Let $\varepsilon_j \rightarrow 0$. Let $(\beta_j, \mu_j)_j$ be a sequence in $L^p(\Omega, \R^{2\times2}) \times \mathcal{M}(\Omega;\R^2)$ such that $\sup_j E_{\varepsilon_j}(\mu_j,\beta_j) < \infty$. 
 Then there exist a sequence $(R_j) \subseteq SO(2)$, a rotation $R \in SO(2)$, a measure $\mu \in \mathcal{M}(\Omega;\R^2) \cap H^{-1}(\Omega;\R^2)$, a function $\beta \in L^2(\Omega;\R^{2\times2})$ and a subsequence (not relabeled) such that \\
 \centering
 \begin{tabular}{ll}
 \parbox{7cm}{
\begin{enumerate}
 \item $\frac{\mu_j}{\varepsilon_j |\log \varepsilon_j|} \stackrel{*}{\rightharpoonup} \mu$ in $\mathcal{M}(\Omega;\R^2)$,
  \item $\frac{R_j^T \beta_j - Id}{\varepsilon_j |\log \varepsilon_j|} \rightharpoonup \beta$ in $L^p(\Omega;\R^{2\times2})$,
 \end{enumerate}
}
 &
 \parbox{5cm}{
 \begin{enumerate}\setcounter{enumi}{2}
 \item $R_j \rightarrow R$,
  \item $\operatorname{curl} \beta = R^T \mu$.
 \end{enumerate}
}
\end{tabular} 
 \end{theorem}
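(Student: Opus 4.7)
The plan is to combine the generalized rigidity estimate (Theorem~\ref{theorem: generalizedrigidity}) with a self-energy lower bound to produce both the rotations $R_j$ and the bound on the dislocation count, and then use a truncation to separate the $L^2$ and $L^p$ contributions of $\beta_j - R_j$. First, apply Theorem~\ref{theorem: generalizedrigidity} to each $\beta_j$ to obtain $R_j \in SO(2)$ satisfying
\begin{equation*}
\int_\Omega \min\{|\beta_j - R_j|^2, |\beta_j - R_j|^p\}\, dx \leq C\left(\int_\Omega W(\beta_j)\, dx + |\mu_j|(\Omega)^2\right),
\end{equation*}
and, since $SO(2)$ is compact, pass to a subsequence so that $R_j \to R$.

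Next, I would establish the count bound $|\mu_j|(\Omega) \leq C \varepsilon_j |\log \varepsilon_j|$ by a self-energy argument. For each dislocation core $B_{\varepsilon_j}(x_i)$ with Burgers vector $\varepsilon_j \xi_i$, the circulation of $\beta_j$ around $\partial B_r(x_i)$ equals $\varepsilon_j\xi_i$; after truncating $\beta_j$ onto $\{|\beta_j - Id| \leq \delta\}$ so that the quadratic lower bound on $W$ is effective and applying a Jensen-type estimate on each dyadic annulus inside $B_{\rho_{\varepsilon_j}}(x_i) \setminus B_{\varepsilon_j}(x_i)$, one obtains $\int_{B_{\rho_{\varepsilon_j}}(x_i) \setminus B_{\varepsilon_j}(x_i)} W(\beta_j)\, dx \geq c \varepsilon_j^2 |\log(\rho_{\varepsilon_j}/\varepsilon_j)|$. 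The assumptions $\rho_\varepsilon / \varepsilon^s \to \infty$ for each $s<1$ and $|\log\varepsilon|\rho_\varepsilon^2 \to 0$ yield $|\log(\rho_{\varepsilon_j}/\varepsilon_j)| = (1-o(1))|\log \varepsilon_j|$; summing over the disjoint cores and using $E_{\varepsilon_j}(\mu_j,\beta_j) \leq C$ bounds the number of dislocations by $O(|\log\varepsilon_j|)$. This yields the count bound, and Banach--Alaoglu produces $\mu$ with $\mu_j/(\varepsilon_j|\log\varepsilon_j|) \stackrel{*}{\rightharpoonup} \mu$ in $\mathcal{M}(\Omega;\R^2)$.

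Set $\tilde\beta_j := (R_j^T \beta_j - Id)/(\varepsilon_j|\log\varepsilon_j|)$. Inserting the energy and count bounds into the rigidity estimate gives $\int_\Omega \min\{|\beta_j-R_j|^2,|\beta_j-R_j|^p\}\,dx \leq C\varepsilon_j^2|\log\varepsilon_j|^2$. Splitting $\Omega = \Omega_j^< \cup \Omega_j^>$ with $\Omega_j^< = \{|\beta_j - R_j| \leq 1\}$, the first summand controls $\tilde\beta_j \chi_{\Omega_j^<}$ uniformly in $L^2$, while on $\Omega_j^>$ one has $\int|\tilde\beta_j|^p\,dx \leq C(\varepsilon_j|\log\varepsilon_j|)^{2-p} \to 0$. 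Along a further subsequence the $L^2$-bounded part converges weakly in $L^2$, and the vanishing $L^p$-contribution from $\Omega_j^>$ forces the full sequence to converge weakly in $L^p$ to the same limit $\beta \in L^2(\Omega;\R^{2\times 2})$. For the curl, $\operatorname{curl}(R_j^T\beta_j) = R_j^T\mu_j$, so passing to the distributional limit gives $\operatorname{curl}\beta = R^T \mu$; since $\beta\in L^2$, this yields $\mu = R\operatorname{curl}\beta \in H^{-1}(\Omega;\R^2)$.

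The main obstacle I anticipate is the self-energy lower bound in the mixed-growth regime: unlike the purely quadratic setting one cannot minorize the integrand by $c|\beta_j|^2$ globally, and a truncation combined with local rigidity on dyadic annuli is needed to recover the full logarithmic divergence $c\varepsilon_j^2|\log\varepsilon_j|$ per core. The separation assumptions on $\rho_\varepsilon$ are tailored exactly so that the quadratic regime of $W$ dominates on the relevant annuli and this argument closes.
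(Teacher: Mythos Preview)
Your overall architecture matches the paper's: a self-energy lower bound on the annuli $B_{\rho_{\varepsilon_j}}(x_i)\setminus B_{\varepsilon_j}(x_i)$ to control the rescaled dislocation measure, the generalized rigidity estimate (Theorem~\ref{theorem: generalizedrigidity}) to produce the rotations $R_j$, and the splitting of $\Omega$ into the quadratic and $p$-growth regions to identify the weak $L^p$-limit as an $L^2$-function. Steps~2--4 of your outline are essentially the paper's Steps~2--3.

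There is, however, a genuine gap in your self-energy step. Your per-core lower bound
\[
\int_{B_{\rho_{\varepsilon_j}}(x_i)\setminus B_{\varepsilon_j}(x_i)} W(\beta_j)\,dx \;\geq\; c\,\varepsilon_j^2\,|\log(\rho_{\varepsilon_j}/\varepsilon_j)|
\]
with a fixed constant $c$ only bounds the \emph{number} $M_j$ of dislocations by $C|\log\varepsilon_j|$. It does \emph{not} bound $|\mu_j|(\Omega)=\varepsilon_j\sum_i|\xi_{i,j}|$, because the Burgers vectors $\xi_{i,j}\in\mathbb{S}$ are a~priori unbounded. To control the total variation you need the quadratic dependence $c\,\varepsilon_j^2|\xi_{i,j}|^2|\log\varepsilon_j|$ per core, and this is only available once you know that the quadratic branch of the min is active on most of the annulus, i.e.\ that $\varepsilon_j|\xi_{i,j}|/(2\pi t)\le 1$ for $t$ in a range of length $\sim|\log\varepsilon_j|$. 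The paper addresses this via an intermediate Claim: it first shows by contradiction (using the $p$-branch of the integrand) that $\varepsilon_j|\xi_{i,j}|\le \varepsilon_j^{\gamma}$ for every $i$ and some fixed $\alpha<\gamma<1$, and only then integrates the quadratic branch over $[\varepsilon_j^{\gamma},\varepsilon_j^{\alpha}]$ to obtain $\sum_i|\xi_{i,j}|^2\le C|\log\varepsilon_j|$, whence $\sum_i|\xi_{i,j}|\le C|\log\varepsilon_j|$.

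A second, related issue: the device of ``truncating $\beta_j$ onto $\{|\beta_j-Id|\le\delta\}$'' does not work here, since such a truncation destroys the circulation condition and you lose the identity $\int_{\partial B_r}\beta_j\cdot\tau=\varepsilon_j\xi_i$ that drives the lower bound. The paper instead applies the mixed-growth rigidity estimate for gradients \cite{MuPa13} on the annuli (via a covering by overlapping simply connected sets) to obtain a pointwise control $|\beta_j-R_{i,j}|^2\wedge|\beta_j-R_{i,j}|^p$, and then uses Jensen's inequality for the convex function $s\mapsto s^2/2\wedge s^p/p$ on circles $\partial B_t$ to extract the circulation. Your closing paragraph hints at ``local rigidity on dyadic annuli'', which is the right idea; but the truncation you describe in the body of the argument should be replaced by this annular rigidity, and the Claim on individual $|\xi_{i,j}|$ needs to be proved before the logarithmic lower bound with the correct $|\xi_{i,j}|^2$ factor can be established.
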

The main tool to obtain the compactness result is a generalized rigidity result for energies with mixed growth. 
It generalizes the rigidity result in \cite{MuPa13} to fields whose $\operatorname{curl }$ is a vector-valued Radon measure.

\begin{theorem} \label{theorem: generalizedrigidity}
Let $1 < p<2$ and $\Omega \subseteq \R^2$ open, simply connected, bounded with Lipschitz boundary. 
 Then there exists a constant $C>0$ such that for every $\beta \in L^p(\Omega;\R^{2 \times 2})$ with $\operatorname{curl} \beta \in \mathcal{M}(\Omega;\R^2)$ there exists a rotation $R \in SO(2)$ such that
 \begin{equation*}
  \int_{\Omega} | \beta - R|^2 \wedge |\beta - R|^p \, dx \leq C \left( \int_{\Omega} \operatorname{dist}(\beta, SO(2))^2 \wedge \operatorname{dist}(\beta, SO(2))^p dx + | \operatorname{curl} \beta|(\Omega)^2 \right).
 \end{equation*}
\end{theorem}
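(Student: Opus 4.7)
The plan is to follow the blueprint of the generalized (nonlinear) Korn-type rigidity estimates of \cite{GaLePo10, MuScZe14}, systematically replacing the classical tools by their mixed-growth analogues. The three main ingredients will be: simple-connectedness of $\Omega$, the generalized Bourgain-Br\'ezis estimate of Theorem \ref{thm: bourgainbrezis}, and the mixed-growth geometric rigidity of \cite{MuPa13}.

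First, set $\mu := \operatorname{curl}\beta \in \mathcal{M}(\Omega;\R^2)$. The heart of the argument is the construction of a lift $\gamma \in L^p(\Omega;\R^{2\times 2})$ satisfying $\operatorname{curl}\gamma = \mu$ together with the mixed estimate
\begin{equation*}
\int_\Omega |\gamma|^2 \wedge |\gamma|^p \, dx \,\leq\, C\,|\mu|(\Omega)^2.
\end{equation*}
As a first candidate I would take $\gamma_0 = \nabla^\perp v$ with $-\Delta v = \mu$ (componentwise, after extending $\mu$ suitably to a larger simply-connected domain); the Riesz-potential representation yields $\|\gamma_0\|_{L^q}^q \leq C\,|\mu|(\Omega)^q$ for every $1 \leq q < 2$, which is of the wrong order in $|\mu|(\Omega)$ in the small-concentration regime. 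To repair this I would invoke Theorem \ref{thm: bourgainbrezis} to decompose the lift as $\gamma_0 = \gamma_{(2)} + \gamma_{(p)}$ with $\|\gamma_{(2)}\|_{L^2} \leq C\,|\mu|(\Omega)$ and an $L^p$-bound on $\gamma_{(p)}$ whose $p$-th power is controlled by $|\mu|(\Omega)^2$. Subadditivity of $t \mapsto t^2 \wedge t^p$ then delivers the displayed estimate on $\gamma := \gamma_0$.

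Once the lift is constructed, simple-connectedness of $\Omega$ together with $\operatorname{curl}(\beta - \gamma) = 0$ provides a $u \in W^{1,p}(\Omega;\R^2)$ with $\nabla u = \beta - \gamma$. The pointwise inequality $\operatorname{dist}(\nabla u, SO(2)) \leq \operatorname{dist}(\beta, SO(2)) + |\gamma|$ and subadditivity of $t \mapsto t^2 \wedge t^p$ yield
\begin{equation*}
\int_\Omega \operatorname{dist}(\nabla u, SO(2))^2 \wedge \operatorname{dist}(\nabla u, SO(2))^p \, dx \leq C \!\int_\Omega \operatorname{dist}(\beta, SO(2))^2 \wedge \operatorname{dist}(\beta, SO(2))^p \, dx + C\,|\mu|(\Omega)^2.
\end{equation*}
Applying the mixed-growth geometric rigidity of \cite{MuPa13} to $\nabla u$ yields $R \in SO(2)$ such that $\int_\Omega |\nabla u - R|^2 \wedge |\nabla u - R|^p\, dx$ is bounded by the left-hand side above. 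A final triangle inequality $|\beta - R| \leq |\nabla u - R| + |\gamma|$, combined with one more appeal to the subadditivity of $t^2 \wedge t^p$, closes the argument.

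The main obstacle is the construction of $\gamma$ with the quadratic dependence $|\mu|(\Omega)^2$ on the right-hand side. This scaling is not seen by the naive Riesz-potential lift, since in the small-mass regime an $L^p$-bound of order $|\mu|(\Omega)^p$ is strictly weaker than what is needed. Matching the quadratic scaling is precisely what the mixed Bourgain-Br\'ezis decomposition of Theorem \ref{thm: bourgainbrezis} is designed to provide, and the delicate technical point is to verify that it yields pieces with the correct interaction between the $L^2$ and $L^p$ norms (in particular the exponent $2$ on $|\mu|(\Omega)$ in the $L^p$ piece's bound).
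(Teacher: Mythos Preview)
Your overall architecture matches the paper's: construct a lift $\gamma$ of $\mu=\operatorname{curl}\beta$, apply the mixed-growth rigidity of \cite{MuPa13} to the gradient $\nabla u=\beta-\gamma$, then assemble via the triangle inequality of Lemma~\ref{lemma: triangle}. The genuine gap is in the construction of $\gamma$, precisely at the point you flag as ``the delicate technical point''.

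The displayed estimate $\int_\Omega |\gamma|^2\wedge|\gamma|^p\,dx\le C\,|\mu|(\Omega)^2$, depending on $\mu$ alone, cannot be obtained from Theorem~\ref{thm: bourgainbrezis} in the way you suggest, and is in fact false for the canonical lift. For $\mu=\xi\,\delta_{x_0}$ with $|\xi|$ small, the Riesz lift satisfies $|\gamma_0|\sim|\xi|/|x-x_0|$ and one computes $\int_\Omega|\gamma_0|^2\wedge|\gamma_0|^p\,dx\sim|\xi|^2\,|\log|\xi||$, not $C|\xi|^2$. More to the point: Theorem~\ref{thm: bourgainbrezis} outputs $\|B\|_{W^{-1,p}}\le C\|b\|_{W^{-2,p}}$, so to get $\|\gamma_{(p)}\|_{L^p}^p\le C|\mu|(\Omega)^2$ you would need $\|b\|_{W^{-2,p}}\le C|\mu|(\Omega)^{2/p}$. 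From $\mu$ alone the only available choice is $b=\operatorname{div}(\mu^\perp)$ with $\|b\|_{W^{-2,p}}\le C|\mu|(\Omega)$, which yields the exponent $p$, not $2$.

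What the paper does instead is bound the lift not by $|\mu|(\Omega)^2$ alone but by the full right-hand side $\delta$, and to manufacture the correct input for Theorem~\ref{thm: bourgainbrezis} it exploits the structure of $\beta$, not just of $\mu$. Concretely: the lift $\tilde\beta$ is first placed only in $L^{2,\infty}$ with $\|\tilde\beta\|_{L^{2,\infty}}\le C|\mu|(\Omega)$; a preliminary \emph{weak-type} rigidity estimate (Proposition~\ref{prop weaktyperigidity}, combining the $L^{2,\infty}$ and the $L^p+L^q$ rigidity of \cite{CoDoMu14}) is applied to $\nabla u$ to produce $Q\in SO(2)$ with $\nabla u-Q\in L^{2,\infty}+L^p$ controlled by $\delta$. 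One then takes $\vartheta$ so that $R(\vartheta)$ is the pointwise closest rotation to $\beta$, shows $\|\vartheta\|_{L^{2,\infty}}^2\le C\delta$, and uses the $SO(2)$ identity $\operatorname{curl}R_{lin}(\vartheta)=-\nabla\vartheta$ (a gradient, hence killed by $\operatorname{div}(\,\cdot\,)^\perp$) to write
\[
\operatorname{div}\bigl((\operatorname{curl}\beta)^\perp\bigr)=\operatorname{div}\bigl((\operatorname{curl}h_1)^\perp\bigr)+\operatorname{div}\bigl((\operatorname{curl}h_2)^\perp\bigr),
\]
where $\beta-R_{lin}(\vartheta)=h_1+h_2\in L^2+L^p$ is controlled by $\delta$. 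Only now does Theorem~\ref{thm: bourgainbrezis} apply with the right scaling, producing a decomposition of $\operatorname{curl}\beta$ in $H^{-1}+W^{-1,p}$ and, via linearity of \eqref{eq: laplace}, a splitting $\tilde\beta=\tilde\beta_1+\tilde\beta_2$ with $\|\tilde\beta_1\|_{L^2}^2+\|\tilde\beta_2\|_{L^p}^p\le C\delta$. The proof then concludes with the strong-type rigidity of \cite{MuPa13} exactly as you outlined. The two missing ideas in your sketch are the weak-type rigidity pass and the linearized-rotation identity that together furnish the decomposition of $\operatorname{div}((\operatorname{curl}\beta)^\perp)$.
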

In order to show the rigidity result we generalize a fine estimate from Bourgain and Br\'ezis (see \cite{BoBr03} and \cite{BoBr07}).
We prove that an $L^1$-function whose divergence is in the space $H^{-2} + W^{-2,p}$, for $1<p<2$, can be decomposed into a part in $H^{-1}$ and $W^{-1.p}$. 
Corresponding estimates hold.

\begin{theorem}[Bourgain-Br\'{e}zis type estimate] \label{thm: bourgainbrezis}
Let $1 < p < 2$ and $\Omega \subseteq \R^2$ open, simply connected, and bounded with Lipschitz boundary.
 Then there exists a constant $C>0$ such that for every $f \in L^1(\Omega;\R^2)$ satisfying $\operatorname{ div } f = a + b \in H^{-2}(\Omega) + W^{-2,p}(\Omega)$ there exist $A \in H^{-1}(\Omega;\R^2)$ and  $B \in W^{-1,p}(\Omega;\R^2)$ such that the following holds:
 \begin{enumerate}
  \item $f = A + B$,
  \item $\| A \|_{H^{-1}(\Omega;\R^2)} \leq C ( \| f \|_{L^1(\Omega;\R^2)} + \| a \|_{H^{-2}(\Omega)})$,
  \item $\| B \|_{W^{-1,p}(\Omega;\R^2)} \leq C \| b \|_{W^{-2,p}(\Omega)}$.
 \end{enumerate}
\end{theorem}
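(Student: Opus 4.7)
The strategy is dual to the direct approach of Bourgain and Br\'ezis. By a Hahn-Banach argument, producing the decomposition $f = A + B$ with the stated norm bounds is equivalent to the single scalar estimate
\begin{equation*}
|\langle f, \varphi\rangle| \leq C\big(\|f\|_{L^1} + \|a\|_{H^{-2}}\big) \|\varphi\|_{H^1_0} + C\|b\|_{W^{-2,p}} \|\varphi\|_{W^{1,p'}_0}
\end{equation*}
for all $\varphi \in C_c^\infty(\Omega;\R^2)$. Indeed, given such an inequality the map $\varphi \mapsto \langle f, \varphi\rangle$ is a continuous linear functional on the diagonal of $H^1_0(\Omega;\R^2) \oplus W^{1,p'}_0(\Omega;\R^2)$ endowed with the norm $(u,w) \mapsto C_1\|u\|_{H^1_0} + C_2\|w\|_{W^{1,p'}_0}$; an extension via Hahn-Banach identifies $A$ and $B$ as the two coordinate components.

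The heart of the proof is a refined Bourgain-Br\'ezis decomposition of vector test fields at the mixed scale: for every $\varphi \in H^1_0(\Omega;\R^2) \cap W^{1,p'}_0(\Omega;\R^2)$ one produces a scalar $\psi \in H^2_0(\Omega) \cap W^{2,p'}_0(\Omega)$ and a remainder $v \in L^\infty(\Omega;\R^2)$ with $\varphi = \nabla \psi + v$ and
\begin{equation*}
\|\psi\|_{H^2} + \|v\|_{L^\infty} \leq C\|\varphi\|_{H^1_0}, \qquad \|\psi\|_{W^{2,p'}} \leq C\|\varphi\|_{W^{1,p'}_0}.
\end{equation*}
For $p = 2$ this is precisely the classical decomposition of Bourgain and Br\'ezis. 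I would obtain the refined version by extending $\varphi$ to $\R^2$ via a Calder\'on-type extension that respects both Sobolev scales (which is possible thanks to the Lipschitz hypothesis on $\partial \Omega$), then performing a Littlewood-Paley decomposition $\varphi = \sum_j \varphi_j$ and running the iterative scheme of Brezis and Van Schaftingen. At each dyadic step one has to verify that the correction to $\psi$ produced by $\varphi_j$ satisfies the required Bernstein-type bounds not only in $L^2$ but also in $L^{p'}$; since $p' \in (2,\infty)$ and only linear Fourier multipliers are involved, the estimates carry through, and a standard cut-off step restores zero boundary traces.

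Granting this decomposition, the proof of the scalar inequality is a direct computation. Integrating by parts,
\begin{equation*}
\langle f, \varphi\rangle = -\langle a, \psi\rangle - \langle b, \psi\rangle + \int_{\Omega} f\cdot v\, dx \leq \|a\|_{H^{-2}}\|\psi\|_{H^2} + \|b\|_{W^{-2,p}}\|\psi\|_{W^{2,p'}} + \|f\|_{L^1}\|v\|_{L^\infty},
\end{equation*}
and inserting the bounds on $\psi$ and $v$ from the previous step yields the target estimate.

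The main obstacle will be the refined decomposition lemma. The Bourgain-Br\'ezis iteration is genuinely nonlinear and a priori tuned to the $L^2$ scale; obtaining simultaneous $W^{2,p'}$ control forces a careful re-examination of every ingredient (Bernstein inequalities, Mikhlin-type multiplier bounds, convergence of the dyadic corrections) to confirm that they transfer from $L^2$ to $L^{p'}$, and that the linear combinations chosen along the iteration do not spoil the $L^\infty$ bound on $v$. Treating the Lipschitz domain $\Omega$ rather than the torus adds an extra technical layer that I would dispatch by the extension-and-truncation argument mentioned above, exploiting the simple connectedness of $\Omega$.
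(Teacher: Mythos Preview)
Your plan is correct and is essentially the paper's approach. The primal decomposition you call the ``refined Bourgain--Br\'ezis decomposition'' --- writing $\varphi = \nabla\psi + v$ with $\|\psi\|_{H^2} + \|v\|_{L^\infty} \le C\|\varphi\|_{H^1_0}$ and $\|\psi\|_{W^{2,p'}} \le C\|\varphi\|_{W^{1,p'}_0}$ --- is exactly the paper's Theorem~2.3 (``Primal result''), and the paper proves it by the route you outline: first on the torus via the Bourgain--Br\'ezis nonlinear iteration, checking at each step that the relevant Littlewood--Paley and multiplier bounds hold in $L^{p'}$ as well as $L^2$ (this uses the Rubio de Francia square-function inequality and the Coifman--Rubio de Francia--Semmes multiplier theorem), then transferring to Lipschitz domains.

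The one genuine difference is in how the separated bounds on $A$ and $B$ are extracted from the primal lemma. You pair $f$ against $\varphi = \nabla\psi + v$, observe that the three resulting terms already group as $(\|f\|_{L^1} + \|a\|_{H^{-2}})\|\varphi\|_{H^1_0} + \|b\|_{W^{-2,p}}\|\varphi\|_{W^{1,p'}_0}$, and invoke Hahn--Banach on the diagonal of $H^1_0 \times W^{1,p'}_0$ to split $f$ accordingly. The paper instead first deduces only the combined sum-space estimate $\|f\|_{H^{-1}+W^{-1,p}} \le C(\|f\|_{L^1} + \|a\|_{H^{-2}} + \|b\|_{W^{-2,p}})$, then applies the primal decomposition on the rescaled domain $\Omega_R = R\cdot\Omega$ (its constant is scale-invariant for homogeneous norms), scales back, and chooses $R$ to balance the powers of $R$ appearing in front of the $H^{-2}$ and $W^{-2,p}$ terms. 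Your Hahn--Banach step is cleaner and avoids this scaling detour; the paper's route has the minor advantage of making the scale-invariance explicit, which is sometimes useful elsewhere.
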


\begin{remark}
The original statement by Bourgain and Br\'ezis, \cite{BoBr07}, in our setting is recovered for $b=0$
but their statement holds in a much more general setting.
For $\Pi^n$ being the $n$-torus and $r \in \mathbb{N}$ it is shown in \cite[Theorem 10]{BoBr07} that it is sufficient that for an operator $S: W^{1,n}(\Pi^n;\mathbb{R}^r) \to X$ with closed range, where $X$ is a Banach space, there exists for each $1 \leq s \leq r$ an index $1 \leq i_s \leq d$ such that for all functions $f\in W^{1,n}(\Pi^n;\mathbb{R}^r)$ it holds that
\begin{equation*}
\lm Sf \rm \leq C \max_{1 \leq s \leq r} \max_{i \neq i_s} \lm \partial_i f_s \rm_{L^n}
\end{equation*}
to guarantee that for any $f \in W^{1,n}(\Pi^n;\mathbb{R}^r)$ there exists a function $g \in W^{1,n}(\Pi^n;\mathbb{R}^r) \cap L^{\infty}(\Pi^n;\mathbb{R}^r)$ satisfying $S(f)=S(g)$ and corresponding bounds.
Clearly, this condition holds true for the $\operatorname{div}$-operator.
\end{remark}

\section{A Bourgain-Br\'ezis-Inequality in Two Dimensions}\label{sec: bourgainbrezis}
In this section we prove the Bourgain-Br\'ezis-type decomposition result, Theorem \ref{thm: bourgainbrezis}. 
We start with the primal result on the torus which we will then localize and later extend to Lipschitz domains.
The main result then follows by dualization.
\subsection{The Case of a Torus}
In this section, we prove a primal version of the Bourgain-Br\'{e}zis type estimate on the two dimensional torus, which we simply denote by $\Pi\cong [-\pi,\pi)^2$ in the following.
To be precise, we show the following statement.
\begin{theorem} \label{theorem: iteration}
 Let $2 < q < \infty$. Then there exists a constant $C>0$ such that for all functions $f \in L^2(\Pi) \cap L^q(\Pi)$ satisfying $\int_{\Pi} f = 0$ 
 there exists a function $F \in L^{\infty}(\Pi;\R^2) \cap H^1(\Pi;\R^2) \cap W^{1,q}(\Pi;\R^2)$ such that \\
\begin{tabular}{ll}
 \parbox{5cm}{
\begin{enumerate}
  \item $\operatorname{div} F = f$,
  \item $\| F \|_{L^{\infty}} \leq C \| f \|_{L^2}$,
 \end{enumerate}
}
 &
 \parbox{5cm}{
 \begin{enumerate}\setcounter{enumi}{2}
  \item $\| F \|_{H^1} \leq C \| f \|_{L^2}$,
  \item $\| F \|_{W^{1,q}} \leq C \| f \|_{L^q}$.
 \end{enumerate}
}
\end{tabular} 
\end{theorem}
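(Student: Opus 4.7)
The plan is to adapt the iterative construction of Bourgain and Brézis so that the resulting vector field $F$ satisfies the $L^\infty$ and $H^1$ estimates (as in the classical result, cf. the remark citing \cite[Theorem 10]{BoBr07}) together with the new $W^{1,q}$ bound controlled by $\lm f \rm_{L^q}$. I would build $F$ as a telescoping sum $F = \sum_{k \geq 0} F_k$, where each $F_k$ is the Bourgain-Brézis single-step correction at dyadic scale $N_k = 2^k$ applied to the residual $f_k = f - \operatorname{div}(F_0 + \dots + F_{k-1})$ of the previous iteration.

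The core is a one-step lemma asserting that, for $f \in L^2(\Pi)\cap L^q(\Pi)$ with zero mean and any dyadic $N$, one can produce $F_N \in L^\infty \cap H^1 \cap W^{1,q}$ and a residual $r_N = f - \operatorname{div} F_N$ satisfying
\begin{equation*}
\lm r_N \rm_{L^2} \leq \tfrac{1}{2} \lm f \rm_{L^2}, \qquad \lm r_N \rm_{L^q} \leq C \lm f \rm_{L^q},
\end{equation*}
together with $\lm F_N \rm_{L^\infty} + \lm F_N \rm_{H^1} \leq C \lm f \rm_{L^2}$ and $\lm F_N \rm_{W^{1,q}} \leq C \lm f \rm_{L^q}$. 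I would construct $F_N$ via a Littlewood-Paley split: on the high-frequency part use the canonical solution $\nabla \Delta^{-1}$, for which the $W^{1,r}$ bound ($1<r<\infty$) is Calderón-Zygmund and the $L^\infty$ bound follows from Bernstein together with the frequency localization; on the low-frequency part apply the nonlinear Bourgain-Brézis construction, which on smooth, finitely frequency-supported inputs yields all $L^r$ estimates automatically. A careful inspection shows that the one-step operation is a sum of frequency-localized Fourier multipliers of Calderón-Zygmund type, so $L^q$ bounds propagate alongside the $L^2$ ones.

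Iterating produces sequences $(F_k)$ and $(f_k)$ with $\lm f_k \rm_{L^2} \leq 2^{-k}\lm f \rm_{L^2}$ while $\lm f_k \rm_{L^q} \leq C \lm f \rm_{L^q}$ uniformly. The bounds $\lm F_k \rm_{L^\infty} + \lm F_k \rm_{H^1} \leq C 2^{-k} \lm f \rm_{L^2}$ sum geometrically, giving the $L^\infty$ and $H^1$ parts of the claim. The $W^{1,q}$ estimate does not sum termwise; instead one uses that each $F_k$ is essentially frequency-supported near scale $N_k$, so the sum $F=\sum_k F_k$ is itself a Littlewood-Paley-type decomposition and a square-function argument yields
\begin{equation*}
\lm F \rm_{W^{1,q}}^q \lesssim \sum_k \lm F_k \rm_{W^{1,q}}^q \lesssim \sum_k \lm f_k \rm_{L^q}^q,
\end{equation*}
whose boundedness follows from the uniform $L^q$-control of the residuals.

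The main obstacle is precisely this uniform $L^q$-control of $f_k$ through the iteration: the classical Bourgain-Brézis construction is nonlinear and is tailored to the $L^2$ scale, so the delicate point is to organize it (via the Littlewood-Paley split above) as a composition of operators that are simultaneously bounded on $L^2$ and $L^q$. Once this is achieved, everything else is summation and standard Calderón-Zygmund theory. After the torus case is settled, the localization to a bounded Lipschitz domain proceeds by extension, cutoff, and dualization as announced in the introduction of Section \ref{sec: bourgainbrezis}.
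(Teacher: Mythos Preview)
Your iteration scheme diverges at the $W^{1,q}$ step. With only the uniform bound $\lm f_k \rm_{L^q} \leq C \lm f \rm_{L^q}$ (no contraction), the sum $\sum_k \lm f_k \rm_{L^q}^q$ is infinite, so the displayed estimate $\lm F \rm_{W^{1,q}}^q \lesssim \sum_k \lm f_k \rm_{L^q}^q$ gives nothing. The attempted rescue via ``each $F_k$ is essentially frequency-supported near scale $N_k$'' is not available: the Bourgain--Br\'ezis single-step construction is genuinely nonlinear (it involves products $\sum_j F_j \prod_{k>j}(1-G_k)$), so it is \emph{not} a Fourier multiplier of Calder\'on--Zygmund type and does not localize the output near a single dyadic shell. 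Consequently neither the square-function almost-orthogonality nor the claim that ``$L^q$ bounds propagate alongside the $L^2$ ones'' by multiplier theory is justified.

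The paper closes exactly this gap by strengthening the one-step lemma: the approximate solution satisfies not only $\lm \operatorname{div}F - f \rm_{L^2} \leq \delta \lm f \rm_{L^2}$ but also $\lm \operatorname{div}F - f \rm_{L^q} \leq \delta \lm f \rm_{L^q}$ (Lemma~\ref{lemma: linear estimate}). Establishing this $L^q$ contraction of the residual is the real work; it requires going back into the nonlinear construction and estimating the error $\sum_j \del_1(G_jH_j)$ in $L^q$, which the paper does using the Rubio de Francia square-function inequality for arbitrary intervals and the Coifman--de Francia--Semmes sharpening of the Marcinkiewicz multiplier theorem. Once both residuals contract by $\tfrac12$, all four bounds sum geometrically and no frequency-orthogonality argument is needed. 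Your proposal correctly identifies ``uniform $L^q$-control through the iteration'' as the delicate point, but the resolution is contraction, not a square-function trick.
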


In the proof we will adapt the strategy from \cite{BoBr03}.
 \\
The main ingredient to prove Theorem \ref{theorem: iteration} is the following lemma which gives a first approximation to Theorem \ref{theorem: iteration}.
It shows that the equation $\operatorname{div} F = f$ can be almost solved by a function $F$ which satisfies estimates with a good linear term and a bad nonlinear term. 
\begin{lemma}[Nonlinear approximation] \label{lemma: nonlinear approximation}
 Let $2 < q < \infty$. Then there exists $c>0$ such that for all $f \in L^2(\Pi) \cap L^q(\Pi)$  satisfying $\| f \|_{L^2} \leq c$ and $\int_{\Pi} f = 0$ the following holds: \\ 
 For every $\delta > 0$ there exist $C_{\delta} > 0$ and $F \in L^{\infty}(\Pi;\R^2)\cap H^1(\Pi;\R^2) \cap W^{1,q}(\Pi;\R^2)$
 such that 
\begin{enumerate}
  \item $\| F \|_{L^{\infty}} \leq C_{\delta}$,
  \item $\| F \|_{H^1} \leq C_{\delta} \| f \|_{L^2}$,
  \item $\| \operatorname{div} F - f \|_{L^2} \leq \delta \| f \|_{L^2} + C_{\delta} \| f \|_{L^2}^2$,
  \item $\| F \|_{W^{1,q}} \leq C_{\delta} \| f \|_{L^q}$,
    \item $\| \operatorname{div} F - f \|_{L^q} \leq \delta \| f \|_{L^2} + C_{\delta} \| f \|_{L^2} \| f \|_{L^q}$. \label{eq: 1 5}
 \end{enumerate} 
\end{lemma}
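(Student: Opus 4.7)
The strategy is to adapt the construction of Bourgain--Br\'ezis \cite{BoBr03}, which produces an approximate primitive of $f$ lying in $L^\infty \cap H^1$, so that it additionally respects the $L^q$--$W^{1,q}$ scaling. The construction is atomic: write $F = \sum_{j,Q} F_{j,Q}$, where $F_{j,Q}$ is attached to a dyadic cube $Q$ of sidelength $\sim 2^{-j}$ at Littlewood--Paley frequency $2^j$. First I would make a smooth Littlewood--Paley decomposition $f = \sum_{j \ge 0} f_j$ with $f_j$ Fourier-localised at $|k| \sim 2^j$, and truncate the high frequencies at a level $J = J(\delta)$. By Bernstein's inequality the truncation tail is $\le \delta \| f\|_{L^2}$ in $L^2$, which accounts for the linear $\delta \|f\|_{L^2}$-term in (3); the $L^q$ Littlewood--Paley square-function equivalence together with $\|f\|_{L^2}\le C\|f\|_{L^q}$ on $\Pi$ yields the analogous contribution in (5). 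On each cube $Q$ at scale $j$ the function $f_j$ is essentially the constant $c_{j,Q}$, and I attach the atom $F_{j,Q} = v_{j,Q}\,\psi_{j,Q}$, where $\psi_{j,Q}$ is a smooth bump adapted to $Q$ and $v_{j,Q}\in\R^2$ is chosen so that $\operatorname{div} F_{j,Q}$ reproduces $c_{j,Q}\chi_Q$ up to a harmless remainder.

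The main obstacle is controlling $\|F\|_{L^\infty}$: a naive summation of $O(1)$-size atoms over $j \le J$ loses a logarithmic factor in $J$. Following \cite{BoBr03}, one resolves this by a greedy/stopping-time selection of the directions $v_{j,Q}$, which produces cancellation across scales and pins $\|F\|_{L^\infty}$ by an absolute constant $C_\delta$. Simultaneously, one discards from the construction those cubes on which $|c_{j,Q}|$ exceeds a threshold $\lambda = \lambda(\delta,\|f\|_{L^2})$; Chebyshev bounds the measure of the excluded set $E$ by $C\|f\|_{L^2}^2/\lambda^2$, so the omitted contribution to $\operatorname{div} F - f$ is at most $C_\delta\|f\|_{L^2}^2$ in $L^2$, yielding the quadratic term in (3). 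The $H^1$-bound is then a consequence of Plancherel applied levelwise together with the frequency-orthogonality across levels and the almost disjointness of the spatial supports within each level.

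The $W^{1,q}$-estimate and the $L^q$-error (5) follow by running the \emph{same} selection through $L^q$-based norms: the Littlewood--Paley square-function equivalence in $L^q$ replaces Plancherel, and the atom $F_{j,Q}$ carries $W^{1,q}$-mass controlled by the $L^q$-norm of $f_j$ on $Q$. Since the selection threshold is set by $\|f\|_{L^2}$ rather than $\|f\|_{L^q}$, the measure of the excluded set is still $\le C\|f\|_{L^2}^2/\lambda^2$, and a H\"older estimate on this set produces the cross-term $C_\delta \|f\|_{L^2}\|f\|_{L^q}$ in (5). The hard part will be verifying that the Bourgain--Br\'ezis selection, designed for $L^2$ input, preserves the $L^q$ bounds simultaneously; concretely, that the atoms chosen by the $L^2$-greedy procedure have their $W^{1,q}$-masses additively controlled by $\|f\|_{L^q}$. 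Once this is established, the five estimates follow in parallel from the atomic decomposition, with $\delta$ governing the frequency cut-off $J$ and $C_\delta$ absorbing all resulting multiplicative constants.
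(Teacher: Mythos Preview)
Your proposal has a genuine gap and also misidentifies the Bourgain--Br\'ezis construction.

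First, the frequency truncation step is wrong. You write that by cutting off at level $J=J(\delta)$ ``the truncation tail is $\le \delta\|f\|_{L^2}$ in $L^2$ by Bernstein's inequality''. Bernstein's inequality says nothing of the sort: it controls $L^\infty$ by $L^2$ on a fixed frequency block, not the $L^2$-mass of the high-frequency tail of an arbitrary $f$. There is no reason $\|\sum_{j>J} f_j\|_{L^2}$ should be small --- all of $f$ may live above level $J$. The actual Bourgain--Br\'ezis argument involves \emph{no} frequency cut-off; every scale is treated.

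Second, the construction you attribute to \cite{BoBr03} --- a spatial atomic decomposition over dyadic cubes with a greedy stopping-time selection of directions $v_{j,Q}$ and a Chebyshev exclusion set --- is not what they do. Their construction is entirely Fourier-side: one splits $\mathbb{Z}^2\setminus\{0\}$ into dyadic rectangles $\Lambda_j^\alpha$, sets $F_j=\sum_{n\in\Lambda_j^1} n_1^{-1}\hat f(n)e^{in\cdot x}$, and then forms the \emph{multiplicative} combination
\[
Y_1=\sum_j F_j\prod_{k>j}(1-G_k),
\]
where $G_j\ge|F_j|$ is a nonnegative Fej\'er-regularisation. The $L^\infty$ bound comes from the telescoping inequality $\sum_j|F_j|\prod_{k>j}(1-|F_k|)\le 1$, valid once $\|f\|_{L^2}\le c$; there is no stopping time and no spatial selection. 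The $\delta$-parameter enters through a second subdivision of each $\Lambda_j^1$ into thin stripes of relative width $\varepsilon$ (with $\varepsilon\to0$ as $\delta\to0$), which makes $\partial_1\tilde F_j$ small.

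The paper's new contribution for the $L^q$-estimates is to push this same multiplicative construction through $L^q$ using two specific tools you do not mention: Rubio de Francia's one-sided Littlewood--Paley inequality for arbitrary interval decompositions (to handle the non-dyadic stripes $\Lambda_{j,r}^1$), and the Coifman--de Francia--Semmes sharpening of Marcinkiewicz (to get the small multiplier norm $\varepsilon^{(t-1)/t}$ rather than just $O(1)$). These are what produce items (4) and (5); the cross-term $\|f\|_{L^2}\|f\|_{L^q}$ comes from the estimate $\|\nabla H_j\|_{L^\infty}\le 2^j\|f\|_{L^2}$ applied inside the $L^q$ square function, not from a H\"older bound on an exclusion set. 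If you want to rewrite the proof, you should follow the multiplicative construction and identify where Rubio de Francia enters.
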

\begin{proof}
 \begin{figure}[t]
 \centering
 \begin{tikzpicture}[scale = 0.4]
  \draw[->] (-10,0) -- (10,0);
  \draw[->] (0,-10) -- (0,10);
  \draw (10,0) node [anchor=west]{$n_1$};
  \draw (0,10) node [anchor=west]{$n_2$};
  
  \draw (1,-2) -- (1,2) -- (2,2) -- (2,-2) -- (1,-2);
  \draw (-1,-2) -- (-1,2) -- (-2,2) -- (-2,-2) -- (-1,-2);
  
  \draw (2,0) -- (2,4) -- (4,4) -- (4,-4) -- (2,-4) -- (2,0);
  \draw (-2,0) -- (-2,4) -- (-4,4) -- (-4,-4) -- (-2,-4) -- (-2,0);
  
  \draw (4,-8) -- (4,8) -- (8,8) -- (8,-8) -- (4,-8);
  \draw (-4,-8) -- (-4,8) -- (-8,8) -- (-8,-8) -- (-4,-8);
  
  \fill[draw=black] (1,0) circle[radius=0.1];
  \fill[draw=black] (2,0) circle[radius=0.1];
  \fill[draw=black] (4,0) circle[radius=0.1];
  \fill[draw=black] (8,0) circle[radius=0.1];
  \fill[draw=black] (-1,0) circle[radius=0.1];
  \fill[draw=black] (-2,0) circle[radius=0.1];
  \fill[draw=black] (-4,0) circle[radius=0.1];
  \fill[draw=black] (-8,0) circle[radius=0.1];
  \fill[draw=black] (0,2) circle[radius=0.1];
  \fill[draw=black] (0,4) circle[radius=0.1];
  \fill[draw=black] (0,8) circle[radius=0.1];
  \fill[draw=black] (0,-2) circle[radius=0.1];
  \fill[draw=black] (0,-4) circle[radius=0.1];
  \fill[draw=black] (0,-8) circle[radius=0.1];
  
  \draw (1,0) node [anchor=north,font=\tiny]{$2^{j-1}$};
  \draw (2,0) node [anchor=north,font=\tiny]{$2^{j}$};
  \draw (4,0) node [anchor=north,font=\tiny]{$2^{j+1}$};
  \draw (8,0) node [anchor=north,font=\tiny]{$2^{j+2}$};
  
  \draw (0,2) node [anchor=north,font=\tiny]{$2^{j}$};
  \draw (0,4) node [anchor=north,font=\tiny]{$2^{j+1}$};
  \draw (0,8) node [anchor=north,font=\tiny]{$2^{j+2}$};
  
  \draw (1.5,1) node [font=\tiny]{$\Lambda^1_j$};
  \draw (-1.5,1) node [font=\tiny]{$\Lambda^1_j$};
  
  \draw (3,1) node [font=\tiny]{$\Lambda^1_{j+1}$};
  \draw (-3,1) node [font=\tiny]{$\Lambda^1_{j+1}$};
  
   \draw (6,1) node [font=\tiny]{$\Lambda^1_{j+2}$};
  \draw (-6,1) node [font=\tiny]{$\Lambda^1_{j+2}$};
 \end{tikzpicture}
 \caption{Sketch of $\Lambda^1$.}
 \label{figure: Lambda^1}
\end{figure}
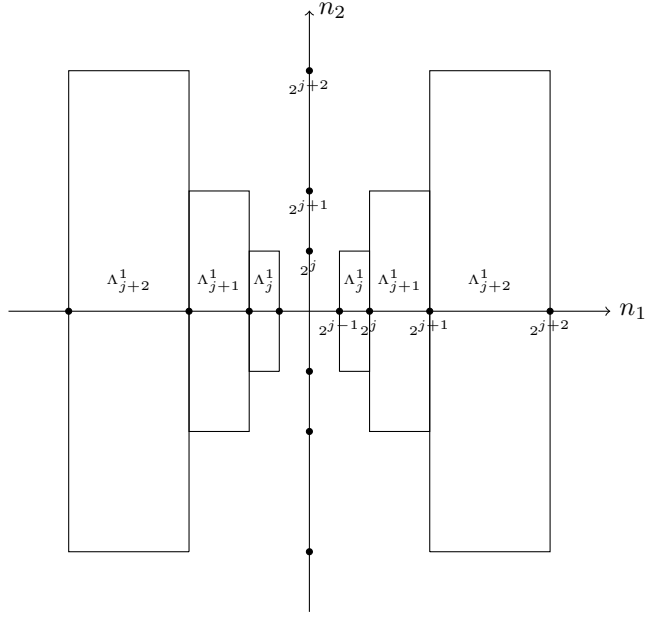
Let $f \in L^2(\Pi) \cap L^q(\Pi)$  such that $\int_{\Pi} f = 0$ and $\| f \|_{L^2} \leq c$ where $c>0$ will be fixed later.\\
\noindent Consider the following decomposition of $\mathbb{Z}^2\setminus\{0\}$, see Figure \ref{figure: Lambda^1},
 \begin{align} \label{eq: Lambda1jr}
  &\Lambda_j^1 = \left\{ 2^{j-1} < |n_1| \leq 2^j; |n_2| \leq 2^j \right\}\\  \text{ and } &\Lambda_j^2 = \left\{ 2^{j-1} < |n_2| \leq 2^{j}; |n_1| \leq 2^{j-1} \right\} \text{ for } j \in \mathbb{N}. \nonumber
 \end{align}
For $\alpha = 1,2$ set $\Lambda^{\alpha} = \bigcup_j \Lambda_j^{\alpha}$.
Correspondingly, let $f^{\alpha} = P_{\Lambda^{\alpha}}f = \sum_{n \in \Lambda^{\alpha}} \hat{f}(n) e^{in\cdot x}$ and decompose $f = f^1 + f^2$.
In the following, we construct functions $Y_{\alpha}: \Pi \rightarrow \R$ which satisfy 
\begin{enumerate}
  \item $\| Y_{\alpha} \|_{L^{\infty}} \leq C_{\delta}$, \label{item: Y1}
  \item $\| Y_{\alpha} \|_{H^1} \leq C_{\delta} \| f \|_{L^2}$,
  \item $\| \del_{\alpha} Y_{\alpha} - f^{\alpha} \|_{L^2} \leq \delta \| f \|_{L^2} + C_{\delta} \| f \|_{L^2}^2$, \label{item: Y3}
  \item $\| Y_{\alpha} \|_{W^{1,q}} \leq C_{\delta} \| f \|_{L^q}$, \label{item: Y4}
  \item $\| \del_{\alpha} Y_{\alpha} - f^{\alpha}\|_{L^q} \leq \delta \| f \|_{L^2} + C_{\delta} \| f \|_{L^2} \| f \|_{L^q}$. \label{item: Y5}
 \end{enumerate}
  Without loss of generality we may assume that $f=f^1$ and construct only $Y_1$.
  We follow the construction in \cite{BoBr03}. \\
Define
  \begin{equation*}
   f_j = P_{\Lambda_j^1} f = \sum_{n\in \Lambda_j^1} \hat{f}(n) \, e^{in\cdot x} \text{ and } F_j = \sum_n \frac1{n_1} \hat{f_j}(n) e^{in\cdot x} = \sum_{n \in \Lambda_j^1} \frac1{n_1} \hat{f}(n) e^{in\cdot x}.
  \end{equation*}
 Moreover, fix a small $\varepsilon > 0$ and subdivide $\Lambda^1_j$ in stripes of length $\sim \varepsilon 2^{j-1}$ by setting
  \begin{equation*}
   \Lambda_j^1 = \bigcup_{0 \leq r \leq 2 \lfloor \varepsilon^{-1} \rfloor + 1} \Lambda^1_{j,r},
  \end{equation*}
where for $0 \leq r \leq  \lfloor \varepsilon^{-1} \rfloor$ we set $\Lambda_{j,r}^1 = I_j^r \times [-2^j,2^j]$ whereas  for $\lfloor \varepsilon^{-1} \rfloor + 1 \leq r \leq 2 \lfloor \varepsilon^{-1} \rfloor + 1$ we set $\Lambda_{j,r}^1 = I_j^{r - \lfloor \varepsilon^{-1} \rfloor -1} \times [-2^j,2^j]$ where $I_k^r$ and $J_k^r$ are defined as 
\begin{align}
I^r_k &= (2^{k-1} + r \varepsilon 2^{k-1},2^{k-1} + (r+1) \varepsilon 2^{k-1}] \cap \mathbb{Z}, \label{eq: IkrJkr} \\
J^r_k &= [-2^{k-1} - (r+1) \varepsilon 2^k,-2^{k-1} - r \varepsilon 2^{k-1}) \cap \mathbb{Z} \text{ for } r=0, \dots, \lfloor \varepsilon^{-1} \rfloor - 1, \nonumber   \\
I_k^{\lfloor \varepsilon^{-1} \rfloor} &=  \left(2^{k-1} + \lfloor \varepsilon^{-1} \rfloor \varepsilon 2^{k-1}, 2^{k} \right] \cap \mathbb{Z}, \nonumber \\
\text{ and } J_k^{\lfloor \varepsilon^{-1} \rfloor} &= \left[-2^{k},-2^{k-1} - \lfloor \varepsilon^{-1} \rfloor  \varepsilon 2^{k-1}\right) \cap \mathbb{Z}. \nonumber
\end{align}
For a sketch of the situation, see Figure \ref{figure: stripes}.
  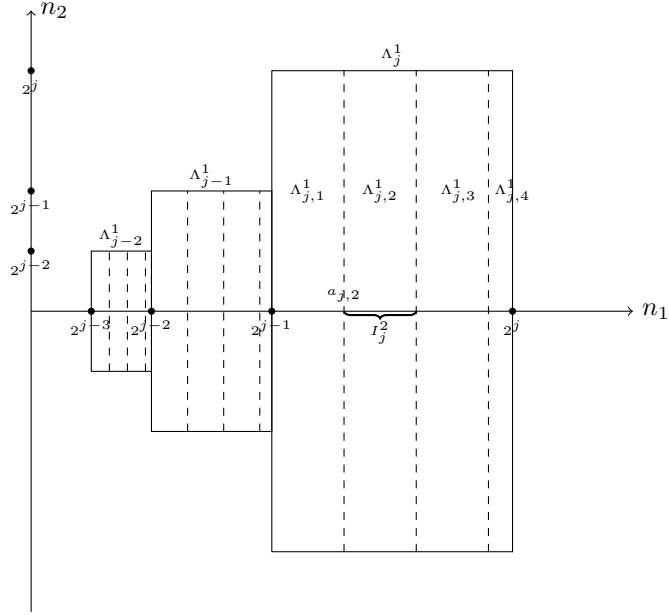
\begin{figure}
 \centering
 \begin{tikzpicture}[scale = 0.4]
  \draw[->] (0,0) -- (20,0);
  \draw[->] (0,-10) -- (0,10);
  \draw (20,0) node [anchor=west]{$n_1$};
  \draw (0,10) node [anchor=west]{$n_2$};
  
  \draw (2,-2) -- (2,2) -- (4,2) -- (4,-2) -- (2,-2);
  \draw[dashed] (2.6,-2) -- (2.6,2);
  \draw[dashed] (3.2,-2) -- (3.2,2);
  \draw[dashed] (3.8,-2) -- (3.8,2);

  \draw (4,0) -- (4,4) -- (8,4) -- (8,-4) -- (4,-4) -- (4,0);
  \draw[dashed] (5.2,-4) -- (5.2,4);
  \draw[dashed] (6.4,-4) -- (6.4,4);
  \draw[dashed] (7.6,-4) -- (7.6,4);
  
  \draw (8,-8) -- (8,8) -- (16,8) -- (16,-8) -- (8,-8);
  \draw[dashed] (10.4,-8) -- (10.4,8);
  \draw[dashed] (12.8,-8) -- (12.8,8);
  \draw[dashed] (15.2,-8) -- (15.2,8);
  
  \fill[draw=black] (2,0) circle[radius=0.1];
  \fill[draw=black] (4,0) circle[radius=0.1];
  \fill[draw=black] (8,0) circle[radius=0.1];
  \fill[draw=black] (16,0) circle[radius=0.1];  
  \fill[draw=black] (0,2) circle[radius=0.1];
  \fill[draw=black] (0,4) circle[radius=0.1];
  \fill[draw=black] (0,8) circle[radius=0.1];

  \draw (3,2.5) node [font=\tiny]{$\Lambda^1_{j-2}$};

  \draw (6,4.5) node [font=\tiny]{$\Lambda^1_{j-1}$};

   \draw (12,8.5) node [font=\tiny]{$\Lambda^1_{j}$};
  
  \draw [
    thick,
    decoration={
        brace,
        mirror,
        raise=0.2
    },
    decorate
] (10.4,0) -- (12.8,0) 
node [font=\tiny, pos=0.5,anchor=north] {$I_j^2$};
  
\draw (10.4,0) node[anchor=south,font=\tiny] {$a_{j,2}$};
  \draw (2,0) node [anchor=north,font=\tiny]{$2^{j-3}$};
  \draw (4,0) node [anchor=north,font=\tiny]{$2^{j-2}$};
  \draw (8,0) node [anchor=north,font=\tiny]{$2^{j-1}$};
  \draw (16,0) node [anchor=north,font=\tiny]{$2^{j}$};
  
  \draw (0,2) node [anchor=north,font=\tiny]{$2^{j-2}$};
  \draw (0,4) node [anchor=north,font=\tiny]{$2^{j-1}$};
  \draw (0,8) node [anchor=north,font=\tiny]{$2^{j}$};
  
  \draw (9.2,4) node [font=\tiny]{$\Lambda^1_{j,1}$};
  \draw (11.6,4) node [font=\tiny]{$\Lambda^1_{j,2}$};
  \draw (14.2,4) node [font=\tiny]{$\Lambda^1_{j,3}$};
  \draw (16,4) node [font=\tiny]{$\Lambda^1_{j,4}$};
  
 \end{tikzpicture}
 \caption{Sketch of the subdivision of $\Lambda^1$ into the stripes $\Lambda^1_{j,r}$ for positive $n_1$.}
 \label{figure: stripes}
\end{figure}
  Next, define 
  \begin{equation*}
   \tilde{F}_j (x) = \sum_r \left| \sum_{n \in \Lambda^1_{j,r}} \frac1{n_1} \hat{f}_j(n) e^{in\cdot x} \right|.
  \end{equation*}
The main property of $\tilde{F}_j$ is the smallness of its partial derivative in $x_1$-direction. 
In fact, we can rewrite $\tilde{F}_j (x) = \sum_r \left| \sum_{n \in \Lambda^1_{j,r}} \frac1{n_1} \hat{f}_j(n) e^{in\cdot x} e^{-ia_{j,r} x_1}\right|$ where $a_{j,r}$ is the left endpoint of $I_j^r$ and the right endpoint of $J_j^{r - \lfloor \varepsilon^{-1} \rfloor - 1}$, respectively. 
Differentiation leads to
\begin{equation}
|\del_1 \tilde{F}_j| = \sum_r \left| \sum_{n \in \Lambda^1_{j,r}} \frac{n_1 - a_{j,r}}{n_1} \hat{f}_j(n) e^{in\cdot x} \right|. \label{eq: tildeF}
\end{equation}
  As $0 \leq \frac{n_1 - a_{j,r}}{n_1} \leq \varepsilon$, we derive, using using Plancharel's identity and H\"older's inequality for the sum over $r$, that 
\begin{align*}
\| \del_1 \tilde{F}_j \|_{L^2} &\leq C \varepsilon^{-\frac12} \sum_r \lm \sum_{n \in \Lambda_{j,r}^1}  \frac{n_1 - a_{j,r}}{n_1} \hat{f}(n) e^{i n\cdot x} \rm_{L^2} \\
&\leq C \varepsilon^{\frac12} \| f_j \|_{L^2}.
\end{align*}  
  \noindent As we also need an appropriate localization in Fourier space of $\tilde{F}_j$, let us recall that the $n$-th one-dimensional F\'{e}jer-kernel is given by
  \begin{equation*}
   K_n(t) = \sum_{|k| < n} \frac{n - |k|}{n} e^{ikt} = \frac1n \frac{1 - \cos(nt)}{1 - \cos(t)} \geq 0.
  \end{equation*}
  If we define
  \begin{equation*}
   G_j = 9 \tilde{F}_j * \left( K_{2^{j+1}} \otimes K_{2^{j+1}} \right),
  \end{equation*}
  we obtain by the properties of the F\'{e}jer kernel that
  \begin{equation}\label{eq: localization Gj}
   \operatorname{supp} \hat{G}_j \subseteq [-2^{j+1},2^{j+1}] \times [-2^{j+1},2^{j+1}] \subseteq \{ |n| \leq C 2^j \} \text{ and } |F_j| \leq |\tilde{F}_j| \leq G_j.
  \end{equation}
  Moreover, in the proof of \cite[Theorem 1]{BoBr03} it is shown that
  \begin{align}
   &\| G_j \|_{L^{\infty}} \leq 9 \| \tilde{F}_j \|_{\infty} \leq C \| f_j \|_{L^2}, 
   &\| G_j \|_{L^2} \leq C \varepsilon^{-\frac12} 2^{-j} \| f_j \|_{L^2}, \label{eq: Gjest1} \\ 
   &\| \del_1 G_j \|_{L^2} \leq C \varepsilon^{\frac12} \| f_j \|_{L^2}, 
   &\| \nabla G_j \|_{L^2} \leq C \varepsilon^{-\frac12} \| f_j \|_{L^2}. \label{eq: Gjest3}
  \end{align}
As in \cite{BoBr03}, we define
  \begin{equation*}
   Y_1 = \sum_j F_j \prod_{k > j} (1-G_k).
  \end{equation*}
  By \eqref{eq: localization Gj} and \eqref{eq: Gjest1}, it holds $|F_j| \leq C \|f_j\|_{L^2} \leq C \lm f \rm_{L^2}$.
We assume that $\lm f \rm_{L^2}$, respectively $c$ in the formulation of the theorem, is so small that $C \lm f \rm_{L^2} < 1$.
 Then, it follows that
  \begin{equation}
   | Y_1 | \leq \sum_j | F_j | \prod_{k > j} (1 - | F_k |) \leq 1. \label{eq: bound Y_1}
  \end{equation}
  Another calculation, see \cite[equation (5.19)]{BoBr03}, shows that
  \begin{equation*}
   Y_1 = \sum_j F_j - \sum_j G_j H_j,
  \end{equation*}
  where 
  \begin{equation*}
   H_j = \sum_{k < j} F_k \prod_{k < l < j} (1 - G_l).
  \end{equation*}
  Thus,
  \begin{equation} \label{eq: del1Y}
   \del_1 Y_1 = \sum_j f_j - \sum_j \del_1(G_j H_j) = f - \sum_j \del_1(G_j H_j).
  \end{equation}
  Moreover, by definition of $H_j$ and $F_j$, and \eqref{eq: localization Gj} it can be seen that 
  \begin{align}\label{eq: localizationGjHj}
&|H_j|\leq 1, \, \,\,  \operatorname{supp} \hat{H}_j \subseteq \{|n| \leq C 2^{j}\}, \,\,\, P_k(G_j H_j) = 0 \text{ for all } k>j+m, \\&\text{ and } G_jH_j = \sum_{k \leq j + m} P_k(G_j H_j), \nonumber
  \end{align}
  where the $P_k$ are smooth Littlewood-Paley-projections on $\{|n| \sim 2^k\}$, and $m$ is independent of $j$.
  In \cite[proof of Theorem 1]{BoBr03}, Bourgain and Br\'ezis show, using \eqref{eq: localization Gj} - \eqref{eq: localizationGjHj}, that
  \begin{equation}
   \| \del_1 Y_1 - f\|_{L^2} \leq C \log(\varepsilon^{-1}) \left(\varepsilon^{\frac12} \| f \|_{L^2} + \varepsilon^{-\frac12} \| f \|_{L^2}^2\right), \text{ and } \| Y_1 \|_{H^1} \leq C_{\varepsilon} \lm f \rm_{L^2}. \label{eq: L2 estimate}
  \end{equation}
Hence, properties \ref{item: Y1}.--\ref{item: Y3}.~for $Y_1$ are already shown. 
  In what follows, we adopt the ideas of their proof to show the corresponding estimates in $L^q$, namely \eqref{item: Y4} and \eqref{item: Y5}.\\
  First, we estimate 
  \begin{equation} 
   \| \nabla Y_1 \|_{L^q} \leq \| \nabla \sum_j F_j \|_{L^q} + \| \nabla \sum_j G_j H_j \|_{L^q}. \label{eq: decomposenablaY}
  \end{equation}
  For the first term on the right hand side, we observe
  \begin{align}
   \left\| \sum_j \nabla F_j \right\|_{L^q} &= \left\| \sum_j \sum_{n \in \Lambda^1_j} \frac{n}{n_1} \hat{f}(n) e^{in\cdot x} \right\|_{L^q} \nonumber \\
   &\leq C \left\| \sum_j \sum_{n \in \Lambda^1_j} \hat{f}(n) e^{in\cdot x} \right\|_{L^q} \nonumber \\
   &= C \left\| f \right\|_{L^q}. \label{eq: nablaYeasy}
  \end{align}
 Here, we used for the inequality that $\frac{n}{n_1} \1_{\bigcup_j \Lambda_j^1}$ is an $L^p$-multiplier. 
This can be shown by multiplier transference and the Marcinkiewicz multiplier theorem (note that in $\Lambda_j$ the second variable $n_2$ is controlled by $2 n_1$).
  \noindent For the second term of the right hand side in \eqref{eq: decomposenablaY} we obtain, using classical Littlewood-Paley estimates, that
  \begin{align}
   \lm \sum_j \nabla (G_j H_j) \rm_{L^q} &\leq C \lm \left( \sum_k \left| P_{k} \sum_j \nabla(G_jH_j) \right|^2 \right)^{\frac12}  \rm_{L^q}. \label{eq: gradient GjHj1}\\
\intertext{As the operator $\nabla$ is a Fourier multiplication operator, it clearly commutes with the Littlewood-Paley projections $P_k$. In particular, the locality in Fourier space of $G_jH_j$ in \eqref{eq: localizationGjHj} also holds for $\nabla G_jH_j$.
Then the triangle inequality and the change of variables $j \to k+s$ yield }
&\leq C \sum_{s \geq -m} \lm \left( \sum_k \left| P_{k} \nabla(G_{k+s}H_{k+s}) \right|^2 \right)^{\frac12}  \rm_{L^q}. \\
\intertext{The change $k \to k-s$ and the Littlewood-Paley inequality for gradients lead to}
   &= C \sum_{s \geq -m} \lm \left( \sum_{k\geq s} |P_{k-s} \nabla (G_k H_k)|^2 \right)^{\frac12} \rm_{L^q}. \label{eq: inbetween}\\
   &\leq C \sum_{s \geq -m} 2^{-s} \lm \left( \sum_k |2^k G_k \underbrace{H_k}_{|\cdot|\leq 1}|^2 \right)^{\frac12} \rm_{L^q} \\
   &\leq C \sum_{s \geq -m} 2^{-s} \lm \left( \sum_j |2^k G_k|^2 \right)^{\frac12} \rm_{L^q} \label{eq: 2^jGj1}. \\ 
   \intertext{By definition, $G_k$ is the convolution of $\tilde{F_k}$ with a Fej\'{e}r kernel which can be bounded from above by the maximal operator. Vector-valued estimates for the maximal operator then give} \nonumber \\
   &\leq C \sum_{s \geq -m} 2^{-s} \lm \left( \sum_k | 2^k \tilde{F}_k |^2 \right)^{\frac12} \rm_{L^q}. \\
   &= C \sum_{s \geq -m} 2^{-s} \lm \left( \sum_k \left( \sum_{r \leq 2 \lfloor \varepsilon^{-1} \rfloor -1} \left| \sum_{n \in \Lambda^1_{k,r}} \frac{2^k}{n_1} \hat{f}(n) e^{in \cdot x} \right| \right)^2 \right)^{\frac12} \rm_{L^q}. \\   
   \intertext{Using H\"older's inequality for the sum over $r$ yields} \nonumber \\
   &\leq C \varepsilon^{-\frac12} \sum_{s \geq -m} 2^{-s} \lm \left( \sum_k \sum_{r \leq 2 \lfloor \varepsilon^{-1} \rfloor -1} \left| \sum_{n \in \Lambda^1_{k,r}} \frac{2^k}{n_1} \hat{f}(n) e^{in \cdot x} \right|^2  \right)^{\frac12} \rm_{L^q}. \label{eq: 20} \\
   \intertext{Now, we use a one-sided Littlewood-Paley-type inequality for non-dyadic decompositions which goes back to  Rubio de Francia, \cite[Theorem 8.1]{Ru85}.
 For the case of a torus, see \cite[Theorem 2.5]{GiTo04} or \cite{Bo85} for the dual statement. 
   The statement is the following: For $q>2$ there exists a constant $C> 0$ such that for all partitions of $\mathbb{Z}$ into intervals $(I_k)_k$ it holds that
   \begin{equation*}
    \lm ( \sum_k |S_k f|^2 )^{\frac12} \rm_{L^q(\Pi^1)} \leq C \lm f \rm_{L^q(\Pi^1)},
   \end{equation*}
   where $S_k f = \sum_{l \in I_k} \hat{f}(l) e^{il\cdot x}$.
   We use this inequality in the first variable for the decomposition of the $n_1$-axis given by $\Lambda_{k,r}^1$, more precisley by $I_k^r$ and $J_k^r$, } \nonumber \\
   \eqref{eq: 20}&\leq C \varepsilon^{-\frac12} \sum_{s \geq -m} 2^{-s} \lm  \sum_{k,r \leq 2 \lfloor \varepsilon^{-1} \rfloor - 1}  \sum_{n \in \Lambda^1_{k,r}} \frac{2^k}{n_1}\hat{f}(n) e^{in \cdot x}  \rm_{L^q}. \label{eq: 21}\\
   \intertext{Finally, we use that $\sum_k \frac{2^k}{n_1} \1_{\Lambda_k^1}$ is an $L^q$-multiplier to obtain }
   &\leq C \varepsilon^{-\frac12} \sum_{s \geq -m} 2^{-s} \bigg\|  \underbrace{\sum_{k, r \leq 2 \lfloor \varepsilon^{-1} \rfloor - 1}  \sum_{n \in \Lambda^1_{k,r}}}_{\sum_{n \in \Lambda^1}} \hat{f}(n) e^{in \cdot x}  \bigg\|_{L^q} \\
   &= C \varepsilon^{-\frac12} \sum_{s \geq -m} 2^{-s} \lm f \rm_{L^q} \label{eq: gradient HjHj last-1} \\
   &\leq C \varepsilon^{-\frac12} \lm f \rm_{L^q}. \label{eq: 2^jGjlast}
  \end{align}
 Collecting \eqref{eq: decomposenablaY}, \eqref{eq: nablaYeasy}, and  \eqref{eq: gradient GjHj1} - \eqref{eq: 2^jGjlast} leads to
  \begin{equation*}
   \lm \nabla Y_1 \rm_{L^q} \leq C \varepsilon^{-\frac12} \lm f \rm_{L^q}.
  \end{equation*}
  As we may assume without loss of generality that $\int_{\Pi} Y_1 = 0$, this implies
  \begin{equation}
   \lm Y \rm_{W^{1,q}} \leq C \varepsilon^{-\frac12} \lm f \rm_{L^q}. \label{eq: YW1q}
  \end{equation}
  Hence, it is left to prove property \ref{item: Y5}.~for $Y_1$.
  By \eqref{eq: del1Y} it remains to control  the term $\lm \del_1 \sum_j (G_j H_j) \rm_{L^q}$. 
  As in \eqref{eq: gradient GjHj1}--\eqref{eq: inbetween}, we can estimate
  \begin{equation*}
   \lm \del_1 \sum_j G_j H_j \rm_{L^q} \leq \sum_{s \geq -m} \lm \left(\sum_j \left|P_{j-s} \del_1 (G_j H_j) \right|^2 \right)^{\frac12} \rm_{L^q}. 
  \end{equation*}
  Now, fix $s_* \in \mathbb{N}$ and estimate for $s > s_*$ as in (\ref{eq: inbetween})--(\ref{eq: gradient HjHj last-1})
  \begin{equation}
   \lm \left(\sum_j \left|P_{j-s} \del_1 (G_j H_j) \right|^2 \right)^{\frac12} \rm_{L^q} \leq C \varepsilon^{-\frac12} 2^{-s} \lm f \rm_{L^q}. \label{eq: del1 GjHj slarge}
  \end{equation}
  For $s \leq s_{*}$ we estimate, using that $|H_j| \leq 1$,
  \begin{align}\label{eq: productrule ssmall}
   &\lm \left(\sum_j \left|P_{j-s} \del_1 (G_j H_j) \right|^2 \right)^{\frac12} \rm_{L^q} \\ \leq &C \lm \left( \sum_j |\del_1 G_j |^2 \right)^{\frac12} \rm_{L^q} 
   +  C \lm \left( \sum_j | G_j \del_1 H_j |^2 \right)^{\frac12} \rm_{L^q}. \nonumber
  \end{align}
 Again, as $G_j$ is the convolution of $\tilde{F}_j$ with a Fej\'{e}r kernel, we may estimate the first term on the right hand side by
  \begin{align}
   \lm \left( \sum_j |\del_1 G_j |^2 \right)^{\frac12} \rm_{L^q} &\leq C \lm \left( \sum_j |\del_1 \tilde{F}_j |^2 \right)^{\frac12} \rm_{L^q}. \nonumber \\
\intertext{Using equation \eqref{eq: tildeF} and H\"older's inequality for the sum over $r$, we find }
   &\leq C \varepsilon^{-\frac12} \lm \left( \sum_j \sum_{r \leq 2 \lfloor \varepsilon^{-1} \rfloor - 1} \left| \sum_{n \in \Lambda_{j,r}^1} \frac{n_1 - a_{j,r}}{n_1} \hat{f}(n) e^{in \cdot x} \right|^2 \right)^{\frac12} \rm_{L^q}. \nonumber \\
   \intertext{Using the Rubio-de-Francia-inequality for arbitrary intervals in the first variable as in \eqref{eq: 20}--\eqref{eq: 21} yields  } \nonumber \\
   &\leq C \varepsilon^{-\frac12} \lm  \sum_j \sum_{r \leq 2 \lfloor \varepsilon^{-1} \rfloor - 1} \sum_{n \in \Lambda_{j,r}^1}\frac{n_1 - a_{j,r}}{n_1} \hat{f}(n) e^{in \cdot x} \rm_{L^q}. \nonumber \\
   \intertext{By the improvement of the Marcinkiewicz multiplier theorem due to Coifman, de Francia, and Semmes (see \cite{CoFrSe88}) the function $\frac{n_1-a_{j,r}}{n_1}$ defines a multiplier whose associated operator-norm from $L^q$ to $L^q$ can be estimated by $C_t \varepsilon^{\frac{t-1}{t}}$ for any $t$ such that $|\frac12 - \frac1q| < \frac1t$. In particular, there exists $t > 2$ such that 
} \nonumber  \\
   &\leq C \varepsilon^{\frac{t-1}t - \frac12} \lm \sum_{n \in \Lambda^1} \hat{f}(n) e^{in \cdot x} \rm_{L^q} =  C \varepsilon^{\frac{t-1}t - \frac12} \lm f \rm_{L^q}. \label{eq: delGj}
  \end{align}
  For the second term of the right hand side of \eqref{eq: productrule ssmall}, note that in \cite{BoBr03} the authors show that
  \begin{equation*}
  \lm \nabla H_j \rm_{L^{\infty}} \leq 2^j \lm f \rm_{L^2}.
  \end{equation*}
  Consequently,
  \begin{equation*}
   \lm \left( \sum_j | G_j \del_1 H_j |^2 \right)^{\frac12} \rm_{L^q} \leq \lm \left( \sum_j |2^j G_j|^2 \right)^{\frac12} \rm_{L^q} \lm f \rm_{L^2}. \label{eq: del1 Hj}
  \end{equation*}
  The right hand side can now be treated as in \eqref{eq: 2^jGj1}--\eqref{eq: gradient HjHj last-1} to obtain 
  \begin{equation}
   \lm \left( \sum_j | G_j \del_1 H_j |^2 \right)^{\frac12} \rm_{L^q} \leq C \varepsilon^{-\frac12} \lm f \rm_{L^q} \lm f \rm_{L^2}. \label{eq: delHj final}
  \end{equation}
  Combining \eqref{eq: del1 GjHj slarge}, \eqref{eq: delGj}, and \eqref{eq: delHj final} yields
  \begin{equation}
   \lm \del_1 \sum_j G_j H_j \rm_{L^q} \leq C 2^{-s_{*}} \varepsilon^{-\frac12} \lm f \rm_{L^q} + \sum_{-m \leq s \leq s_*} \left(\varepsilon^{-\frac12} \lm f \rm_{L^q} \lm f \rm_{L^2} + \varepsilon^{\frac{t-1}t - \frac12} \lm f \rm_{L^q}\right). \label{eq: delGjHj final}
  \end{equation}
  Eventually, choose $s_*$ such that $2^{-s_*} \sim \varepsilon$. 
  Then 
  \begin{equation}
   \lm \del_1 \sum_j G_j H_j \rm_{L^q} \leq C \log (\varepsilon^{-1}) \left( \varepsilon^{\frac{t-1}t - \frac12} \lm f \rm_{L^q} + \varepsilon^{-\frac12} \lm f \rm_{L^q} \lm f \rm_{L^2} \right).
  \end{equation}
Here, we used that $\varepsilon^{\frac12} \leq \varepsilon^{\frac{t-1}{t}-\frac12}$ for $\varepsilon < 1$.
Recall that $t>2$ and therefore $\log (\varepsilon^{-1}) \varepsilon^{\frac{t-1}t - \frac12} \to 0$ as $\varepsilon \to 0$.
  Comparing with \eqref{eq: L2 estimate} and \eqref{eq: YW1q} shows that for given $\delta>0$ the properties \ref{item: Y1}.--\ref{item: Y5}.~for $Y_1$ can be achieved for $\varepsilon>0$ small enough.
This finishes the proof.
\end{proof}

From the nonlinear estimate we can now derive a linear estimate.

\begin{lemma}[Linear estimate]\label{lemma: linear estimate}
 Let $2 < q < \infty$. 
 Then for every $\delta > 0$ there exists a constant $C_{\delta}>0$ such that for every function $f \in L^2(\Pi) \cap L^q(\Pi)$ satisfying $\int_{\Pi} f = 0$ there exists $F \in L^{\infty}(\Pi;\R^2) \cap H^1(\Pi;\R^2) \cap W^{1,q}(\Pi;\R^2)$ such that \\
 \begin{tabular}{ll}
 \parbox{5cm}{
\begin{enumerate}
  \item $\| F \|_{L^{\infty}} \leq C_{\delta} \| f \|_{L^2}$,
  \item $\| F \|_{H^1} \leq C_{\delta} \| f \|_{L^2}$,
  \item $\| \operatorname{div} F - f \|_{L^2} \leq \delta \| f \|_{L^2}$,
 \end{enumerate}
}
 &
 \parbox{5cm}{
 \begin{enumerate}\setcounter{enumi}{3}
  \item $\| F \|_{W^{1,q}} \leq C_{\delta} \| f \|_{L^q}$,
  \item $\| \operatorname{div} F - f \|_{L^q} \leq \delta \| f \|_{L^q}$.
 \end{enumerate}
}
\end{tabular} 
\end{lemma}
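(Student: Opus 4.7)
The plan is to reduce to Lemma \ref{lemma: nonlinear approximation} by a single scaling step that simultaneously handles the smallness hypothesis and absorbs both nonlinear error terms. The trivial case $f\equiv 0$ is covered by $F=0$, so I assume $f\not\equiv 0$.

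For parameters $\delta_0>0$ and $c'\in(0,c]$ (with $c$ as in Lemma \ref{lemma: nonlinear approximation}) to be chosen below, I set $\lambda:=c'/\|f\|_{L^2}$ and $\tilde f:=\lambda f$. Then $\|\tilde f\|_{L^2}=c'\le c$ and $\int_\Pi \tilde f=0$, so Lemma \ref{lemma: nonlinear approximation} applied with parameter $\delta_0$ yields $\tilde F\in L^\infty(\Pi;\R^2)\cap H^1(\Pi;\R^2)\cap W^{1,q}(\Pi;\R^2)$ satisfying its five conclusions. I then define $F:=\tilde F/\lambda$; the $1$-homogeneity of every norm involved and the linearity of $\operatorname{div}$ immediately give
\[
\|F\|_{L^\infty}\le (C_{\delta_0}/c')\|f\|_{L^2},\quad \|F\|_{H^1}\le C_{\delta_0}\|f\|_{L^2},\quad \|F\|_{W^{1,q}}\le C_{\delta_0}\|f\|_{L^q},
\]
together with the error bounds
\[
\|\operatorname{div} F-f\|_{L^2}\le (\delta_0+C_{\delta_0}c')\|f\|_{L^2},\quad \|\operatorname{div} F-f\|_{L^q}\le \delta_0\|f\|_{L^2}+C_{\delta_0}c'\|f\|_{L^q}.
\]
The decisive point here is that in each originally quadratic term of Lemma \ref{lemma: nonlinear approximation} exactly one factor $\|\tilde f\|_{L^2}=c'$ survives after division by $\lambda$ and turns into a linear quantity with small prefactor $c'$.

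To obtain estimate 5 of the present lemma in its stated form (with $\|f\|_{L^q}$ on the right-hand side), I additionally invoke the continuous embedding $L^q(\Pi)\hookrightarrow L^2(\Pi)$ on the finite-measure torus, $\|f\|_{L^2}\le C_\Pi\|f\|_{L^q}$, which upgrades the preceding $L^q$-bound to $\|\operatorname{div} F-f\|_{L^q}\le (\delta_0 C_\Pi+C_{\delta_0}c')\|f\|_{L^q}$. Given the target $\delta>0$, I first choose $\delta_0>0$ so small that $\delta_0(1+C_\Pi)\le\delta/2$, and then, for this fixed $\delta_0$, pick $c'\in(0,c]$ so small that $C_{\delta_0}c'\le\delta/2$. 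With these choices all five conclusions of the lemma hold, with $C_\delta:=\max(C_{\delta_0}/c',C_{\delta_0})$ depending only on $\delta$.

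I do not anticipate a genuine obstacle: no iteration of the nonlinear estimate is needed, since the single rescaling $f\mapsto c' f/\|f\|_{L^2}$ already reduces every quadratic error term to a linear one with arbitrarily small prefactor. The one subtle point is the mild asymmetry in estimate 5 of Lemma \ref{lemma: nonlinear approximation}, whose linear part involves $\|f\|_{L^2}$ rather than $\|f\|_{L^q}$; this is precisely what forces the finite-measure embedding $L^q(\Pi)\hookrightarrow L^2(\Pi)$ to be invoked in the last step.
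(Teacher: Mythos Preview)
Your proof is correct and follows essentially the same single-rescaling idea as the paper: normalize $\|f\|_{L^2}$ to a small value, apply Lemma~\ref{lemma: nonlinear approximation}, and observe that the quadratic error terms become linear with an arbitrarily small prefactor. The paper simply fixes $\|f\|_{L^2}=\delta C_\delta^{-1}$ and asserts the conclusion with $\tilde\delta=2\delta$, whereas you separate the choice of the small parameter $\delta_0$ in the nonlinear lemma from the scaling level $c'$; this is a cosmetic difference. Your treatment of estimate~5 is in fact more careful than the paper's: the linear term $\delta_0\|f\|_{L^2}$ coming from Lemma~\ref{lemma: nonlinear approximation}(5) does require the finite-measure embedding $L^q(\Pi)\hookrightarrow L^2(\Pi)$ to be converted into a multiple of $\|f\|_{L^q}$, a point the paper's one-line proof passes over in silence.
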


\begin{proof}
As we want to prove a linear estimate, we may assume without loss of generality that $\|f \|_{L^2} = \delta C_{\delta}^ {-1} < c$ where $C_{\delta}$ and $c>0$ are the constants from Lemma \ref{lemma: nonlinear approximation}. 
The application of Lemma \ref{lemma: nonlinear approximation} for $\delta > 0$ directly implies the claim for $\tilde{\delta} = 2 \delta$ and $C_{\tilde{\delta}} = \delta^{-1} C_{\delta}^2$.
\end{proof}
Armed with this approximation we are now able to prove Theorem \ref{theorem: iteration} by iteration.

\begin{proof}[Proof of Theorem \ref{theorem: iteration}]
Let $f \in L^2(\Pi) \cap L^q(\Pi)$ such that $\int_{\Pi} f = 0$. 
 We apply Lemma \ref{lemma: linear estimate} for $\delta = \frac12$. Hence, there exists $F_1$ such that \\
 \begin{tabular}{ll}
 \parbox{6cm}{
 \begin{itemize}
  \item $\| F_1 \|_{L^{\infty}} \leq C_{\frac12} \| f \|_{L^2}$,
  \item $\| F_1 \|_{H^1} \leq C_{\frac12} \| f \|_{L^2}$,
  \item $\| \operatorname{div} F_1 - f \|_{L^2} \leq \frac12 \| f \|_{L^2}$,
  \end{itemize}
 }
 &
 \parbox{6cm}{
\begin{itemize}
  \item $\| F_1 \|_{W^{1,q}} \leq C_{\frac12} \| f \|_{L^q}$,
  \item $\| \operatorname{div} F_1 - f \|_{L^q} \leq \frac12 \| f \|_{L^q}$.
 \end{itemize} 
 }
 \end{tabular}

We define $F_i$ for $i \geq 2$ inductively: let $\tilde{f}_i = f -  \operatorname{div} \sum_{j = 1}^{i-1} F_j$. Note that by the periodicity of the $F_j$ it holds $\int_{\Pi} \tilde{f}_i = 0$. The inductive application of Lemma \ref{lemma: linear estimate} for $\delta = \frac12$ and
$\tilde{f}_i$ provides the existence of $F_i$ such that
\begin{enumerate}
 \item $\| F_i \|_{L^{\infty}} \leq C_{\frac12} \| f - \operatorname{div} \sum_{j = 1}^{i-1} F_j \|_{L^2} \leq C_{\frac12} (\frac12)^{i-1} \| f \|_{L^2}$,
  \item $\| F_i \|_{H^1} \leq  C_{\frac12} \| f - \operatorname{div} \sum_{j = 1}^{i-1} F_j \|_{L^2} \leq C_{\frac12} (\frac12)^{i-1} \| f \|_{L^2}$,
  \item $\| \operatorname{div} F_i + \operatorname{div} \sum_{j=1}^{i-1} F_j - f \|_{L^2} \leq \frac12 \| \operatorname{div} \sum_{j=1}^{i-1} F_j - f \|_{L^2} \leq (\frac12)^i \| f\|_{L^2} $,
  \item $\| F_i \|_{W^{1,q}} \leq C_{\frac12} \| f - \operatorname{div} \sum_{j = 1}^{i-1} F_j \|_{L^q} \leq C_{\frac12} (\frac12)^{i-1} \| f \|_{L^q}$,
  \item $\| \operatorname{div} F_i + \operatorname{div} \sum_{j=1}^{i-1} F_j - f \|_{L^q} \leq \frac12 \| \operatorname{div} \sum_{j=1}^{i-1} F_j - f \|_{L^q} \leq (\frac12)^i \| f\|_{L^q}$.
\end{enumerate}
Define $F = \sum_{j=1}^{\infty} F_j$. Then, $\operatorname{div} F = f$ and the claimed estimates follow by the triangle inequality with $C = 2 C_{\frac12}$.
\end{proof}
\subsection{Lipschitz Domains}

In \cite{BoBr03}, the authors prove that for $f \in L^2(\Pi)$ such that $\int_{\Pi} f \,dx = 0$ there exists $Y \in H^1(\Pi)$ satisfying $\operatorname{div } Y = f$ and $\| Y \|_{L^{\infty}} + \| Y \|_{H^1} \leq C \| f \|_{L^2}$.
Moreover, in \cite[Section 7]{BoBr03} the authors present an argumentation to transfer this result to Lipschitz domains.
One can adopt their strategy with minor changes to find

\begin{theorem}\label{theorem: lipschitz}
 Let $2 < q < \infty$ and $\Omega \subseteq \R^2$ open, bounded with Lipschitz boundary. 
 Then there exists a constant $C > 0$ such that for every $f \in L^2(\Omega) \cap L^q(\Omega)$ satisfying $\int_{\Omega} f \,dx = 0$ there exists $Y \in L^{\infty}(\Omega;\R^2) \cap H_0^1(\Omega;\R^2) \cap W^{1,q}_0(\Omega;\R^2)$ such that $  \operatorname{div }Y = f$,
 \begin{equation*}
\| Y \|_{L^{\infty}(\Omega;\R^2)} + \| Y \|_{H_0^1(\Omega;\R^2)} \leq C \| f \|_{L^2(\Omega)}, \text{ and } \| Y \|_{W_0^{1,q}(\Omega;\R^2)} \leq C \| f \|_{L^q(\Omega)}.
 \end{equation*}
\end{theorem}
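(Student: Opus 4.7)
The strategy is to follow the localization argument in \cite[Section 7]{BoBr03}, replacing their torus estimate with our stronger Theorem \ref{theorem: iteration} which additionally controls $L^q$ and $W^{1,q}$ norms. Since the Bourgain-Br\'ezis reduction uses only partitions of unity, bi-Lipschitz boundary flattenings, even reflections, and elementary mass-redistribution corrections, each of which is simultaneously bounded on $L^2$ and $L^q$, the $W_0^{1,q}$ estimate comes along essentially for free once the torus step has been upgraded.

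Concretely, I would proceed as follows. First, cover $\overline{\Omega}$ by finitely many open sets $U_0, U_1, \dots, U_N$ with $\overline{U_0} \subseteq \Omega$, and such that each boundary patch $U_i \cap \Omega$ for $i \geq 1$ is the image under a bi-Lipschitz map $\Phi_i$ of a half-disc with $\Phi_i(\partial\Omega \cap U_i)$ the flat side. Introduce a subordinate smooth partition of unity $\{\chi_i\}$ and write $f = \sum_i \chi_i f$. Since $\chi_i f$ need not have vanishing integral over $U_i \cap \Omega$, redistribute mass by adding telescoping correction terms supported in the pairwise overlaps to obtain a decomposition $f = \sum_i g_i$ with $\operatorname{supp} g_i \subseteq U_i \cap \Omega$, $\int g_i \,dx = 0$, and $\| g_i \|_{L^2} + \| g_i \|_{L^q} \leq C (\| f \|_{L^2} + \| f \|_{L^q})$.

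Second, for the interior patch $U_0$, extend $g_0$ by zero to a torus of appropriate size and apply Theorem \ref{theorem: iteration}. For each boundary patch $U_i$, transport $g_i$ by $\Phi_i$ to the half-disc, reflect evenly across the flat side to produce a function on a full disc with vanishing mean, extend by zero to a containing torus, and again apply Theorem \ref{theorem: iteration} to obtain a torus solution $\tilde Y_i$. Pulling back via the Piola-type transform $Y_i := (\det D\Phi_i)^{-1} D\Phi_i \cdot (\tilde Y_i \circ \Phi_i)$ preserves the divergence structure, and after multiplication by a cutoff equal to $1$ on $\operatorname{supp} g_i$ and supported in $U_i \cap \Omega$, one obtains $Y_i \in L^\infty \cap H_0^1 \cap W_0^{1,q}$ with $\operatorname{div} Y_i = g_i$ on $\Omega$. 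Summing, $Y := \sum_i Y_i$ satisfies $\operatorname{div} Y = f$ in $\Omega$, vanishes on $\partial\Omega$, and the claimed bounds follow by summing finitely many contributions.

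The main obstacle lies in the bi-Lipschitz bookkeeping: one must verify that the Piola-type pushforward and its inverse are bounded on all three spaces $L^\infty$, $H_0^1$, and $W_0^{1,q}$, with constants depending only on the Lipschitz bounds of $\Phi_i$. The $L^\infty$ bound is immediate; the $H^1$ and $W^{1,q}$ bounds follow from the chain rule for Sobolev functions under bi-Lipschitz maps, since $D\Phi_i$ and $(D\Phi_i)^{-1}$ are bounded a.e. A secondary technical point is to arrange the multiplicative cutoffs so that the resulting $Y_i$ lies genuinely in $H_0^1 \cap W_0^{1,q}$ without spoiling the $L^\infty$ bound; this is achieved by choosing the cutoff supports slightly inside $U_i$ and using that the commutator $[\chi, \operatorname{div}]$ acts on $L^\infty$-functions boundedly into $L^\infty$, so that any resulting error $f - \operatorname{div} Y$ can be absorbed by iterating the construction as in the proof of Theorem \ref{theorem: iteration}.
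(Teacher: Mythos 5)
Your overall strategy --- localize via a partition of unity and boundary charts, reduce to the torus, and invoke Theorem \ref{theorem: iteration} --- is exactly the route the paper intends (it simply cites \cite[Section 7]{BoBr03} and asserts the argument transfers with minor changes). However, the two steps you dismiss as bookkeeping are precisely the delicate points of that localization, and the justifications you give for them do not hold up.

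First, the Piola-type pullback $Y_i=(\det D\Phi_i)^{-1}D\Phi_i\,(\tilde Y_i\circ\Phi_i)$ is \emph{not} bounded on $H^1$ or $W^{1,q}$ when $\Phi_i$ is merely bi-Lipschitz. The chain rule controls $\tilde Y_i\circ\Phi_i$, but differentiating the product produces the term $\nabla\bigl((\det D\Phi_i)^{-1}D\Phi_i\bigr)\,(\tilde Y_i\circ\Phi_i)$, which involves second derivatives of $\Phi_i$; for a Lipschitz flattening, say $\Phi_i(x_1,x_2)=(x_1,x_2-\gamma(x_1))$ with $\gamma$ only Lipschitz, this term contains $\gamma''$, a genuine distribution, so $Y_i$ need not belong to $H^1$ at all. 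Second, the construction does not deliver the zero trace asserted in the statement: since $\operatorname{supp} g_i$ reaches $\partial\Omega$, a cutoff equal to $1$ on $\operatorname{supp} g_i$ cannot vanish on $\partial\Omega\cap U_i$, and an even reflection of $g_i$ across the flat side at best allows you to symmetrize $\tilde Y_i$ so that its \emph{normal} component vanishes there; the tangential component survives, so $Y\notin H^1_0(\Omega;\R^2)\cap W^{1,q}_0(\Omega;\R^2)$. These are exactly the points where the argument of \cite[Section 7]{BoBr03} has to work harder than a naive change of variables, and both require a real argument (or a different mechanism, e.g.\ passing through the dual a priori estimate, which localizes more robustly) rather than the one-line justifications given.
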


We use this result to prove the primal version of Theorem \ref{thm: bourgainbrezis}, namely we 
establish a decomposition result for functions in $H^1_0 \cap W^{1,q}_0$ into a bounded part and a gradient. 

  \begin{theorem}[Primal result]\label{theorem: dual}
 Let $2 < q < \infty$ and $\Omega \subseteq \R^2$ open, simply connected, bounded with Lipschitz boundary.
Then there exists a constant $C>0$ such that for every $\varphi \in H^1_0(\Omega;\R^2) \cap W^{1,q}_0(\Omega;\R^2)$ there exist $h \in H^2_0(\Omega) \cap W^{2,q}_0(\Omega)$ and $g \in L^{\infty}(\Omega;\R^2) \cap H^1_0(\Omega;\R^2) \cap W^{1,q}_0(\Omega;\R^2)$ satisfying
 \begin{enumerate}
  \item $\varphi = g + \nabla h$,
  \item $\| g \|_{L^{\infty}(\Omega;\R^2)} + \| g \|_{H_0^1(\Omega;\R^2)} + \| h \|_{H_0^2(\Omega)} \leq C \| \varphi \|_{H^1_0(\Omega;\R^2)}$,
  \item $\| g \|_{W_0^{1,q}(\Omega;\R^2)} + \| h \|_{W^{2,q}_0(\Omega)} \leq C \| \varphi \|_{W^{1,q}_0(\Omega;\R^2)}$.
 \end{enumerate}
\end{theorem}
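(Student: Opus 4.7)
The plan is to view this as a Hodge-type decomposition in two dimensions and reduce it directly to Theorem \ref{theorem: lipschitz} by rotating the equation $\operatorname{div} Y = f$ into a $\operatorname{curl}$ equation. The relation $\varphi = g + \nabla h$ forces $\operatorname{curl}(\varphi - g) = 0$, so the idea is to prescribe the curl of $g$ to match that of $\varphi$, recover $h$ from the simple-connectedness of $\Omega$, and transport all estimates through the $\pi/2$-rotation.

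First, set $f = \operatorname{curl} \varphi = \partial_1 \varphi_2 - \partial_2 \varphi_1$. Since $\varphi \in H^1_0(\Omega;\R^2) \cap W^{1,q}_0(\Omega;\R^2)$, one has $f \in L^2(\Omega) \cap L^q(\Omega)$, and an approximation by $C^\infty_c$-functions together with Stokes' theorem yields $\int_\Omega f\,dx = 0$. Applying Theorem \ref{theorem: lipschitz} to $-f$ (for the exponent $q$) produces $Y \in L^\infty(\Omega;\R^2)\cap H^1_0(\Omega;\R^2) \cap W^{1,q}_0(\Omega;\R^2)$ with $\operatorname{div} Y = -f$ and with the quantitative bounds
\[
\|Y\|_{L^\infty} + \|Y\|_{H^1_0} \leq C\|f\|_{L^2}, \qquad \|Y\|_{W^{1,q}_0} \leq C\|f\|_{L^q}.
\]
I then define $g := (Y_2, -Y_1) \in L^\infty \cap H^1_0 \cap W^{1,q}_0$. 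A direct computation gives $\operatorname{curl} g = -\partial_1 Y_1 - \partial_2 Y_2 = -\operatorname{div} Y = f = \operatorname{curl} \varphi$, while the rotation leaves all norms of $g$ identical to those of $Y$, so the required bounds on $g$ are immediate from $\|f\|_{L^2} \leq C\|\varphi\|_{H^1_0}$ and $\|f\|_{L^q} \leq C\|\varphi\|_{W^{1,q}_0}$.

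Next, the vector field $\varphi - g \in H^1_0 \cap W^{1,q}_0$ is curl-free, so since $\Omega$ is simply connected, there exists $h$, unique up to an additive constant, with $\nabla h = \varphi - g$. This immediately gives $h \in H^2(\Omega) \cap W^{2,q}(\Omega)$ together with the bounds
\[
\|\nabla h\|_{H^1} \leq \|\varphi\|_{H^1_0} + \|g\|_{H^1_0}, \qquad \|\nabla h\|_{W^{1,q}} \leq \|\varphi\|_{W^{1,q}_0} + \|g\|_{W^{1,q}_0}.
\]
The step requiring the most care is showing that $h$ can be chosen in $H^2_0 \cap W^{2,q}_0$, i.e.\ that both $h$ and $\partial_\nu h$ vanish on $\partial \Omega$. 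Since $\nabla h = \varphi - g$ has vanishing trace, both the tangential and the normal components of $\nabla h$ vanish on $\partial \Omega$; the vanishing tangential component says that $h$ is constant along each connected component of $\partial \Omega$. Here I use that $\Omega \subseteq \R^2$ is bounded, simply connected and Lipschitz, hence $\partial \Omega$ is a single Jordan curve, and therefore $h$ is constant on all of $\partial \Omega$. Subtracting that constant I may arrange $h = 0$ on $\partial \Omega$, while $\partial_\nu h = 0$ follows from the vanishing normal trace of $\nabla h$; thus $h \in H^2_0(\Omega) \cap W^{2,q}_0(\Omega)$. Finally, a Poincaré-type estimate (e.g.\ $\|h\|_{H^2_0} \leq C\|\nabla h\|_{H^1}$ and the analogous $W^{2,q}$-inequality, valid for functions with both $h$ and $\partial_\nu h$ vanishing on $\partial\Omega$) combined with the already established bounds on $g$ delivers the required estimates on $h$, completing the proof.
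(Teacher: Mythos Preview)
Your proof is correct and follows essentially the same route as the paper: apply Theorem~\ref{theorem: lipschitz} to $\operatorname{curl}\varphi$, rotate the resulting $Y$ by $\pi/2$ to obtain $g$ with $\operatorname{curl}g=\operatorname{curl}\varphi$, and recover $h$ as a potential for the curl-free field $\varphi-g$ on the simply connected domain. The only cosmetic difference is your sign convention (you solve $\operatorname{div}Y=-f$ and take $g=(Y_2,-Y_1)$ rather than $g=(-Y_2,Y_1)$), and you spell out more carefully than the paper why both $h$ and $\partial_\nu h$ vanish on $\partial\Omega$.
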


\begin{proof}
Let $\varphi \in H_0^1(\Omega;\R^2) \cap W_0^{1,q}(\Omega)$. 
Then $\int_{\Omega} \operatorname{curl }\varphi \, dx = 0$.
The application of Theorem \ref{theorem: lipschitz} to $\operatorname{curl }\varphi \in L^2(\Omega) \cap L^q(\Omega)$ provides a function $Y \in L^{\infty}(\Omega;\R^2) \cap H_0^1(\Omega;\R^2) \cap W^{1,q}_0(\Omega;\R^2)$ such that $\operatorname{div }Y = \operatorname{curl }\varphi$
and
 \begin{align*}
  &\| Y \|_{L^{\infty}(\Omega;\R^2)} + \| Y \|_{H_0^1(\Omega;\R^2)} \leq C \| \operatorname{curl }\varphi \|_{L^2(\Omega)} \leq C \| \varphi \|_{H^1(\Omega;\R^2)}, \\ \text{ and } &\| Y \|_{W_0^{1,q}(\Omega;\R^2)} \leq C \| \varphi \|_{W^{1,q}(\Omega;\R^2)}.
  \end{align*}
  Set $g = Y^{\perp} = (-Y_2,Y_1)$.
  Then $g$ satisfies the same bounds as $Y$ and $\operatorname{curl } g = \operatorname{div }Y = \operatorname{curl }\varphi$.
 As $\Omega$ is simply-connected, by the Hodge decomposition there exists a vector field $h \in H^2(\Omega) \cap W^{2,q}(\Omega)$ such that $\varphi - g = \nabla h$,
  \begin{equation*}
   \| h \|_{H^2(\Omega)} \leq C \| g - \varphi \|_{H^1_0(\Omega;\R^2)} \leq C \| \varphi \|_{H^1(\Omega;\R^2)}, \text{ and } \| h \|_{W^{2,q}(\Omega)} \leq  C \| \varphi \|_{W_0^{1,q}(\Omega;\R^2)}.
  \end{equation*}
  Moreover, $\nabla h = \varphi - g = 0$ on $\del \Omega$. 
  Therefore, $h$ is constant on the boundary of $\Omega$ and we may assume it is zero. 
  Hence, $h \in H_0^2(\Omega) \cap W^{2,q}_0(\Omega)$.
\end{proof} 
\begin{remark}
From Theorem \ref{theorem: dual} we derive the corresponding dual statement, i.e., a function $f \in L^1(\Omega;\R^2)$ satisfying $\operatorname{ div } f = a + b \in H^{-2}(\Omega) + W^{-2,p}(\Omega)$, $1< p < 2$, is 
an element of the space $H^{-1}(\Omega;\R^2) + W^{-1,p}(\Omega;\R^2)$ and 
\begin{equation*} \| f \|_{H^{-1}(\Omega;\R^2) + W^{-1,p}(\Omega;\R^2)} \leq C \left( \left\| f \right\|_{L^1\left(\Omega;\R^2\right)} + \left\| a \right\|_{H^{-2}\left(\Omega\right)} 
  + \left\| b \right\|_{W^{-2,p}\left(\Omega\right)}\right).
\end{equation*}
Indeed, let $\varphi \in H_0^1(\Omega;\R^2) \cap W_0^{1,p'}(\Omega;\R^2)$ where $\frac1p + \frac1{p'} = 1$. 
We use the decomposition $\varphi = g + \nabla h$ from Theorem \ref{theorem: dual} and estimate
\begin{align*}
&\int_{\Omega} f \varphi \,dx = \int_{\Omega} f (g + \nabla h) \,dx \\
\leq &C\left( \lm f \rm_{L^1(\Omega;\R^2)} \lm g \rm_{L^{\infty}(\Omega;\R^2)} + \lm a \rm_{H^{-2}(\Omega)} \lm h \rm_{H^2_0(\Omega)} + \lm b \rm_{W^{-2,p}(\Omega)} \lm h \rm_{W^{2,p'}_0(\Omega)}\right) \\
\leq &C \left(\lm f \rm_{L^1(\Omega;\R^2)} + \lm a \rm_{H^{-2}(\Omega)} + \lm b \rm_{W^{-2,p'}(\Omega)}\right) \max \left(\lm \varphi \rm_{H_0^1(\Omega;\R^2)},\lm \varphi \rm_{W^{1,p}_0(\Omega;\R^2)} \right).
\end{align*} 
In particular, $f \in (H^1_0(\Omega;\R^2) \cap W^{1,p'}_0(\Omega;\R^2))' = H^{-1}(\Omega;\R^2) + W^{-1,p}(\Omega;\R^2)$.
Hence, it can be written as $f = A + B \in  H^{-1}(\Omega;\R^2) + W^{-1,p}(\Omega;\R^2)$.
The difference to the Theorem \ref{thm: bourgainbrezis} is that $A$ and $B$ only satisfy a combined estimate, precisely
\begin{equation}\label{eq: remarkestimate}
\lm A \rm_{H^{-1}(\Omega;\R^2)} + \lm B \rm_{W^{-1,p}(\Omega;\R^2)} \leq C ( \lm f \rm_{L^1(\Omega;\R^2)} + \lm a \rm_{H^{-2}(\Omega)} + \lm b \rm_{W^{-2,p}(\Omega)} ).
\end{equation}
\end{remark}

We will use a scaling argument to separate the combined estimate \eqref{eq: remarkestimate}. \\
 The classical $W^{k,p}$-norm and the homogeneous $W_0^{k,p}$-norm are equivalent norms on the space $W_0^{k,p}$.
So far, it has not been important which of these norms we use on $W_0^{k,p}$.
Next, we are interested in the scaling of the optimal constant in Theorem \ref{theorem: dual}.
With respect to the homogeneous $W_0^{k,p}$-norms, i.e., $\| f \|_{W^{k,p}_0(\Omega,\R^m)} = \sum_{|\alpha| = k} \lm D^{\alpha} f \rm_{L^p(\Omega;\R^m)}$, this constant is scaling invariant.

 \begin{proposition} \label{prop: scaling}
 Let $2 < q < \infty$ and $\Omega \subseteq \R^2$ open, simply connected, bounded with Lipschitz boundary.
  Let $R > 0$ and $\Omega_R = R \cdot \Omega$. 
  If we denote by $C(\Omega)$ and $C(\Omega_R)$ the optimal constant of Theorem \ref{theorem: dual} for the domain $\Omega$ and $\Omega_R$, respectively, then $C(\Omega) = C(\Omega_R)$.
 \end{proposition}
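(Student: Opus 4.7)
The plan is to prove the equality of optimal constants by transferring the decomposition between $\Omega$ and $\Omega_R$ via the dilation $T_R \colon \Omega \to \Omega_R$, $T_R(x) = R x$, and then reading off the scaling of each of the homogeneous norms appearing in Theorem \ref{theorem: dual}. Given $\varphi_R \in H^1_0(\Omega_R;\R^2) \cap W^{1,q}_0(\Omega_R;\R^2)$, I would pull it back by setting $\varphi(x) = \varphi_R(Rx)$, which lies in $H^1_0(\Omega;\R^2) \cap W^{1,q}_0(\Omega;\R^2)$. Applying Theorem \ref{theorem: dual} on $\Omega$ produces a decomposition $\varphi = g + \nabla h$ with $g \in L^\infty \cap H_0^1 \cap W_0^{1,q}(\Omega)$ and $h \in H_0^2 \cap W_0^{2,q}(\Omega)$ and the corresponding estimates governed by $C(\Omega)$.

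Next I would push the pair $(g,h)$ forward to $\Omega_R$ by the natural choice dictated by the gradient relation: $g_R(y) = g(y/R)$ and $h_R(y) = R \, h(y/R)$. A direct computation gives $\nabla h_R(y) = \nabla h(y/R)$, so that $\varphi_R = g_R + \nabla h_R$ on $\Omega_R$, and both $g_R, h_R$ inherit the zero boundary trace. The whole argument then reduces to the elementary change-of-variables identities
\begin{equation*}
\|\nabla \varphi_R\|_{L^p(\Omega_R)} = R^{(2-p)/p}\|\nabla\varphi\|_{L^p(\Omega)}, \qquad \|\nabla g_R\|_{L^p(\Omega_R)} = R^{(2-p)/p}\|\nabla g\|_{L^p(\Omega)},
\end{equation*}
and the analogous identity for $D^2 h_R$, while $\|g_R\|_{L^\infty(\Omega_R)} = \|g\|_{L^\infty(\Omega)}$.

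The key observation, which is really what makes the statement true and is the only slightly subtle point, is that in two dimensions with $p=2$ the homogeneous $H^1_0$-seminorm is dilation invariant, so the $L^\infty$-estimate for $g_R$, which is controlled by $\|\nabla\varphi\|_{L^2(\Omega)}$, transfers to a bound by $\|\nabla\varphi_R\|_{L^2(\Omega_R)}$ with the \emph{same} constant. For the $L^q$-branch with $q>2$ the left- and right-hand sides of the decomposition estimate both pick up the factor $R^{(2-q)/q}$, which therefore cancels. Combining these observations yields $C(\Omega_R) \leq C(\Omega)$. Since the roles of $R$ and $R^{-1}$, and of $\Omega$ and $\Omega_R$, are symmetric, applying the same argument to the inverse dilation gives $C(\Omega) \leq C(\Omega_R)$, hence equality.

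No genuine obstacle is expected; the only thing that must be checked with some care is that the pushforward $h_R(y) = R\,h(y/R)$ is the right lift (as opposed to $h(y/R)$) so that $\nabla h_R \circ T_R = \nabla h$, and that the exponents on the scaling factors match on both sides of each inequality, i.e.\ that one is applying Theorem \ref{theorem: dual} in its \emph{homogeneous} form so that no inhomogeneous lower order terms break the invariance.
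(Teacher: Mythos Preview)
Your proposal is correct and is precisely the scaling argument the paper has in mind; the paper's own proof is simply ``The proof follows easily by scaling,'' and what you have written is exactly the computation that fills in this line, including the crucial observation that in two dimensions the homogeneous $H^1_0$-seminorm and the $L^\infty$-norm are both dilation invariant, so inequality (2) transfers with the same constant, while in inequality (3) both sides pick up the factor $R^{(2-q)/q}$.
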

 
\begin{proof}
The proof follows easily by scaling.
\end{proof}
Using Proposition \ref{prop: scaling}, we can now prove Theorem \ref{thm: bourgainbrezis}.

\begin{theorem*}[Bourgain-Br\'{e}zis type estimate]
Let $1 < p < 2$ and $\Omega \subseteq \R^2$ open, simply connected, and bounded with Lipschitz boundary.
 Then there exists a constant $C>0$ such that for every $f \in L^1(\Omega;\R^2)$ satisfying $\operatorname{ div } f = a + b \in H^{-2}(\Omega) + W^{-2,p}(\Omega)$ there exist $A \in H^{-1}(\Omega;\R^2)$ and  $B \in W^{-1,p}(\Omega;\R^2)$ such that 
 \begin{enumerate}
  \item $f = A + B$,
  \item $\| A \|_{H^{-1}(\Omega;\R^2)} \leq C ( \| f \|_{L^1(\Omega;\R^2)} + \| a \|_{H^{-2}(\Omega)})$,
  \item $\| B \|_{W^{-1,p}(\Omega;\R^2)} \leq C \| b \|_{W^{-2,p}(\Omega)}$.
 \end{enumerate}
\end{theorem*}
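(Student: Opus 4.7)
The plan is to combine the primal decomposition of Theorem \ref{theorem: dual} with the scale invariance of its constant (Proposition \ref{prop: scaling}) in order to separate the two terms in the combined estimate of the preceding remark. The starting point is that the remark (via the same duality computation used there) produces, for any such $f$, a decomposition $f=A+B\in H^{-1}+W^{-1,p}$ bounded by $C(\Omega)\bigl(\|f\|_{L^1}+\|a\|_{H^{-2}}+\|b\|_{W^{-2,p}}\bigr)$, where the constant $C(\Omega)$ is inherited from Theorem \ref{theorem: dual}.

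The central idea is to apply this combined estimate not directly on $\Omega$ but on the dilated domain $\Omega_R=R\Omega$, to the rescaled function $f_R(y):=f(y/R)$ and the induced splitting of $\operatorname{div}f_R$. Since Proposition \ref{prop: scaling} gives $C(\Omega_R)=C(\Omega)$ when homogeneous norms are used, a direct computation of how each quantity transforms under $y=Rx$ yields $\|f_R\|_{L^1(\Omega_R)}=R^{2}\|f\|_{L^1}$, $\|a_R\|_{H^{-2}(\Omega_R)}=R^{2}\|a\|_{H^{-2}}$, $\|A_R\|_{H^{-1}(\Omega_R)}=R^{2}\|A\|_{H^{-1}}$, and $\|b_R\|_{W^{-2,p}(\Omega_R)}=R^{1+2/p}\|b\|_{W^{-2,p}}$, $\|B_R\|_{W^{-1,p}(\Omega_R)}=R^{1+2/p}\|B\|_{W^{-1,p}}$. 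Pulling the combined estimate back to $\Omega$ and dividing through by $R^{2}$ then produces, for every $R>0$, a decomposition $f=A^{(R)}+B^{(R)}$ satisfying
\begin{equation*}
\|A^{(R)}\|_{H^{-1}}+R^{\alpha}\|B^{(R)}\|_{W^{-1,p}}\leq C\bigl(\|f\|_{L^1}+\|a\|_{H^{-2}}+R^{\alpha}\|b\|_{W^{-2,p}}\bigr),
\end{equation*}
where $\alpha:=\tfrac{2}{p}-1>0$ since $1<p<2$.

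The last step will be to optimize in $R$: choosing $R$ so that $R^{\alpha}\|b\|_{W^{-2,p}}=\|f\|_{L^1}+\|a\|_{H^{-2}}$ equates the two terms on the right-hand side and directly yields the separated bounds $\|A^{(R)}\|_{H^{-1}}\leq 2C\bigl(\|f\|_{L^1}+\|a\|_{H^{-2}}\bigr)$ and $\|B^{(R)}\|_{W^{-1,p}}\leq 2C\|b\|_{W^{-2,p}}$, which is the theorem with constant $2C$. The degenerate cases $\|b\|_{W^{-2,p}}=0$ or $\|f\|_{L^1}+\|a\|_{H^{-2}}=0$ will be handled by sending $R\to\infty$ or $R\to 0$, or by a vanishing perturbation of $f$.

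The only substantive difficulty is the correct bookkeeping of the five scaling exponents. The crucial fact is that the triple $(L^1,H^{-2},H^{-1})$ scales as $R^{2}$ while the pair $(W^{-2,p},W^{-1,p})$ scales as $R^{1+2/p}$ under two-dimensional dilations, so that their difference $\alpha=\tfrac{2}{p}-1$ is strictly positive precisely when $p<2$. It is exactly this mismatch of exponents that allows the scaling parameter to balance the two contributions against each other; the argument degenerates at $p=2$, consistent with the fact that there $W^{-1,p}$ and $H^{-1}$ coincide and no separation is needed.
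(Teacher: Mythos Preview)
Your proposal is correct and follows essentially the same route as the paper's proof: apply the duality/combined estimate on the dilated domain $\Omega_R$, use the scale invariance of the constant from Proposition~\ref{prop: scaling}, compute the scaling exponents $R^{2}$ for the $(L^1,H^{-2},H^{-1})$ triple and $R^{1+2/p}$ for the $(W^{-2,p},W^{-1,p})$ pair, and then choose $R$ so that $R^{2/p-1}\|b\|_{W^{-2,p}}=\|f\|_{L^1}+\|a\|_{H^{-2}}$. The paper's argument is identical, including the choice of $R$ (written there as $R^{1-2/p}=\|b\|_{W^{-2,p}}/(\|f\|_{L^1}+\|a\|_{H^{-2}})$); it also glosses over the degenerate cases you mention.
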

\begin{proof}
 Let $f \in L^1(\Omega;\R^2)$, $R>0$, and $\Omega_R = R \cdot \Omega$. \\ 
 Define the function $f_R: \Omega_R \to \R^2$ by $f_R(x) = f\left(\frac{x}{R}\right)$ for $x \in \Omega_R$ . 
 Now, consider a test function $\varphi \in H_0^1\left(\Omega_R;\R^2\right) \cap W_0^{1,p'}\left(\Omega_R;\R^2\right)$ where $\frac1p + \frac1{p'} = 1$.
 By Theorem \ref{theorem: dual}, there exist functions $h \in H^2_0\left(\Omega_R\right) \cap W^{2,p'}_0\left(\Omega_R\right)$ and 
 $g \in L^{\infty}\left(\Omega_R;\R^2\right) \cap H^1_0\left(\Omega_R;\R^2\right) \cap W^{1,p'}_0\left(\Omega_R;\R^2\right)$ such that
  $\varphi = g + \nabla h$, 
\begin{align*} &\| g \|_{L^{\infty}\left(\Omega_R;\R^2\right)} + \| g \|_{H_0^1\left(\Omega_R;\R^2\right)} + \| h \|_{H_0^2\left(\Omega_R\right)} \leq C \| \varphi \|_{H_0^1\left(\Omega_R;\R^2\right)}, \\
 \text{and } &\| g \|_{W_0^{1,p'}\left(\Omega_R;\R^2\right)} + \| h \|_{W_0^{2,p'}\left(\Omega_R\right)} \leq C \| \varphi \|_{W_0^{1,p'}(\Omega_R;\R^2)}.
\end{align*}
  Note that by Proposition \ref{prop: scaling} the constant $C$ does not depend on $R$.
  Next, observe that
  \begin{align}\label{eq: expansion}
   &\int_{\Omega_R} f_R \cdot  \varphi \, dx   = \int_{\Omega_R} f_R \cdot (g + \nabla h) \,dx  \\
 = &< f_R, g>_{L^1(\Omega_R;\R^2),L^{\infty}(\Omega_R;\R^2)} - < a_R, h >_{H^{-2}(\Omega_R),H^2_0(\Omega_R)} \nonumber \\& - < b_R, h >_{W^{-2,p}(\Omega_R),W^{2,p'}_0(\Omega_R)}, \nonumber
  \end{align}
 where we define the distributions $a_R$ and $b_R$ by 
 \begin{align*} <a_R,h>_{H^{-2}(\Omega_R),H^2_0(\Omega_R)} &= R <a,h_{R^{-1}}>_{H^{-2}(\Omega),H^2_0(\Omega)} \\ \text{ and } <b_R,h>_{W^{-2,p}(\Omega_R),W^{2,p'}_0(\Omega_R)} &= R <b,h_{R^{-1}}>_{W^{-2,p}(\Omega),W^{2,p'}_0(\Omega)},
 \end{align*}
  respectively, for $h_{R^{-1}}(x) = h(Rx)$.
  By scaling we have
  \begin{equation} \label{eq: scaling negative}
   \left\| a_R \right\|_{H^{-2}\left(\Omega_R\right)} = R^2 \left\| a \right\|_{H^{-2}(\Omega)}, \text{ and } \left\| b_R \right\|_{W^{-2,p}\left(\Omega_R\right)} = R^{1 + \frac2p} \left\| b \right\|_{W^{-2,p}(\Omega)}.
  \end{equation}
  Moreover, from \eqref{eq: expansion} we derive that
  \begin{align*}
  &\left| \int_{\Omega_R} f_R \cdot \varphi \,dx \right| \\ \leq &C \left( \left\| f_R \right\|_{L^1\left(\Omega_R;\R^2\right)} + \left\| a_R \right\|_{H^{-2}\left(\Omega_R\right)} 
  + \left\| b_R \right\|_{W^{-2,p}\left(\Omega_R\right)}\right) \\ 
&\max \left(\left\| \varphi \right\|_{H^{1}\left(\Omega_R;\R^2\right) }, \left\| \varphi \right\|_{W^{1,p'}\left(\Omega_R;\R^2\right)} \right).
  \end{align*}
  The dual space of $H_0^1(\Omega_R;\R^2) \cap W^{1,p'}_0(\Omega_R;\R^2)$ equipped with the norm 
\begin{equation*}
\lm \varphi \rm_{H_0^1(\Omega_R;\R^2) \cap W^{1,p'}_0(\Omega_R;\R^2)} = \max \left( \lm \varphi \rm_{H_0^1(\Omega_R;\R^2)},\lm \varphi \rm_{W^{1,p'}_0(\Omega_R;\R^2)}\right)
\end{equation*}
 is isomorphic to the space $H^{-1}(\Omega_R;\R^2) + W^{-1,p}(\Omega_R;\R^2)$ endowed with the norm 
\begin{equation*}
\lm F \rm_{H^{-1}(\Omega_R;\R^2) + W^{-1,p}(\Omega_R;\R^2)} = \inf \{ \lm F_1 \rm_{H^{-1}(\Omega_R;\R^2)} + \lm F_2 \rm_{W^{-1,p}(\Omega_R;\R^2)}: F_1 + F_2 = F \}.
\end{equation*}
Hence,  $f_R \in H^{-1}\left(\Omega_R;\R^2\right) + W^{-1,p}\left(\Omega_R;\R^2\right)$ and
  \begin{equation*}
   \left\| f_R \right\|_{ H^{-1}\left(\Omega_R;\R^2\right) + W^{-1,p}\left(\Omega_R;\R^2\right)} \leq C 
   \left( \left\| f_R \right\|_{L^1\left(\Omega_R;\R^2\right)} + \left\| a_R \right\|_{H^{-2}\left(\Omega_R\right)} + \left\| b_R \right\|_{W^{-2,p}\left(\Omega_R\right)}\right).
  \end{equation*}
  In particular, there exist $A_R \in H^{-1}\left(\Omega_R;\R^2\right), B_R \in W^{-1,p}\left(\Omega_R;\R^2\right)$ such that $f_R = A_R + B_R$ and
  \begin{equation}\label{eq: estimate sumspace}
   \left\| A_R \right\|_{H^{-1}\left(\Omega_R\right)} + \left\| B_R \right\|_{W^{-1,p}\left(\Omega_R\right)} \leq C 
   \left( \left\| f_R \right\|_{L^1\left(\Omega_R\right)} + \left\| a_R \right\|_{H^{-2}\left(\Omega_R\right)} + \left\| b_R \right\|_{W^{-2,p}\left(\Omega_R\right)}\right).
  \end{equation}
 We define $A \in H^{-1}(\Omega;\R^2)$ and $B \in W^{-1,p}(\Omega;\R^2)$ by  
  \begin{align*}
   &< A, \varphi >_{H^{-1}(\Omega;\R^2),H_0^1(\Omega;\R^2)} = R^{-2} < A_R, \varphi_R >_{H^{-1}(\Omega_R;\R^2),H_0^1(\Omega_R;\R^2)} \\ \text{ and } &<B, \varphi>_{W^{-1,p}(\Omega;\R^2),W_0^{1,p'}(\Omega;\R^2)} = R^{-2} <B_R, \varphi_R>_{W^{-1,p}(\Omega_R;\R^2),W_0^{1,p'}(\Omega_R;\R^2)} 
  \end{align*}
  for every $\varphi \in C_c^{\infty}(\Omega)$ and $\varphi_R \in C_c^{\infty}(\Omega_R)$ given by $\varphi_{R}(x) = \varphi\left(\frac{x}{R}\right)$.
  It follows for every $\varphi \in C^{\infty}_c(\Omega)$ 
\begin{align*}
\int_{\Omega} f \cdot \varphi \, dx &= R^{-2} \int_{\Omega_R} f_R \cdot \varphi_R \, dx \\ &= < A, \varphi >_{H^{-1}(\Omega;\R^2),H_0^1(\Omega;\R^2)} + <B, \varphi>_{W^{-1,p}(\Omega;\R^2),W_0^{1,p'}(\Omega;\R^2)}.
\end{align*}
Consequently, $f= A + B$.
  Moreover, by \eqref{eq: scaling negative} and \eqref{eq: estimate sumspace} we see that
  \begin{align*}
   \left\| A \right\|_{H^{-1}(\Omega;\R^2)} &= R^{-2} \left\| A_R \right\|_{H^{-1}\left(\Omega_R;\R^2\right)} \\ &\leq C 
   \left( \left\| f \right\|_{L^1\left(\Omega;\R^2\right)} + \left\| a \right\|_{H^{-2}\left(\Omega\right)} + R^{\frac2p - 1} \left\| b \right\|_{W^{-2,p}\left(\Omega\right)}\right) \\ 
   \left\| B \right\|_{W^{-1,p}(\Omega;\R^2)}& = R^{-1-\frac2p} \left\| B_R \right\|_{W^{-1,p}\left(\Omega_R;\R^2\right)} \\ &\leq C 
   \left( R^{1-\frac2p} \left(\left\| f \right\|_{L^1\left(\Omega;\R^2\right)} + \left\| a \right\|_{H^{-2}\left(\Omega\right)}\right) + \left\| b \right\|_{W^{-2,p}\left(\Omega\right)}\right).
  \end{align*}
  Choosing $R$ such that $R^{1-\frac2p} = \frac{\left\| b \right\|_{W^{-2,p}\left(\Omega\right)}}{\left\| f \right\|_{L^1\left(\Omega;\R^2\right)} + \left\| a \right\|_{H^{-2}\left(\Omega\right)}}$
  finishes the proof.
\end{proof}
\begin{remark}
Let us remark here that by the same argumentation this result also holds for Radon measures.
\end{remark}

\section{A Generalized Rigidity Estimate with Mixed Growth}\label{sec: generalizedrgidity}

The goal of this section is to prove a rigidity estimate for fields with non-vanishing $\operatorname{curl}$ in the case of a nonlinear energy density with mixed growth. 
Precisely, we show the following theorem.
\begin{theorem*}
Let $1 < p<2$ and $\Omega \subseteq \R^2$ open, simply connected, bounded with Lipschitz boundary. 
 There exists a constant $C>0$ such that for every $\beta \in L^p(\Omega;\R^{2 \times 2})$ with $\operatorname{curl} \beta \in \mathcal{M}(\Omega;\R^2)$ there exists a rotation $R \in SO(2)$ such that
 \begin{equation*}
  \int_{\Omega} | \beta - R|^2 \wedge |\beta - R|^p \, dx \leq C \left( \int_{\Omega} \operatorname{dist}(\beta, SO(2))^2 \wedge \operatorname{dist}(\beta, SO(2))^p dx + | \operatorname{curl} \beta|(\Omega)^2 \right).
 \end{equation*}
\end{theorem*}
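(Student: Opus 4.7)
The plan is to reduce the statement to the rigidity result of \cite{MuPa13} for curl-free fields by constructing a corrector $K$ that absorbs the curl of $\beta$ and whose mixed-growth integral is controlled by $|\mu|(\Omega)^2$. This mirrors the strategy used in \cite{GaLePo10,MuScZe14} for the $L^2$-case, with Theorem \ref{thm: bourgainbrezis} playing the role of the classical Bourgain-Br\'ezis inequality.

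Concretely, since $\mu = \operatorname{curl}\beta$ is a vector-valued Radon measure in two dimensions, it lies in $W^{-1,p}(\Omega;\R^2)$, so $\operatorname{div}\mu \in W^{-2,p}(\Omega)$ with $\lm \operatorname{div}\mu \rm_{W^{-2,p}} \leq C|\mu|(\Omega)$. Applying Theorem \ref{thm: bourgainbrezis} (extended to measures as in the remark following it) with $a=0$, $b=\operatorname{div}\mu$ yields a decomposition $\mu = A + B$ with $A \in H^{-1}(\Omega;\R^2)$, $B \in W^{-1,p}(\Omega;\R^2)$, and $\lm A \rm_{H^{-1}} + \lm B \rm_{W^{-1,p}} \leq C|\mu|(\Omega)$. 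Row-wise solving the Poisson problems $\Delta \phi^{(1)}_i = A_i$ in $H^1_0$ and $\Delta \phi^{(2)}_i = B_i$ in $W^{1,p}_0$, and setting $K^{(j)}_i := (\nabla \phi^{(j)}_i)^\perp$, one obtains, via $\operatorname{curl}(\nabla^\perp \psi) = \Delta \psi$, a matrix field $K := K^{(1)} + K^{(2)}$ with $\operatorname{curl} K = \mu$, $\lm K^{(1)} \rm_{L^2} \leq C|\mu|(\Omega)$, and $\lm K^{(2)} \rm_{L^p} \leq C|\mu|(\Omega)$.

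Since $\Omega$ is simply connected and $\operatorname{curl}(\beta - K) = 0$, there exists $u \in W^{1,p}(\Omega;\R^2)$ with $\nabla u = \beta - K$. The mixed-growth geometric rigidity of \cite{MuPa13} applied to $\nabla u$ produces a rotation $R \in SO(2)$ with
\[
\int_\Omega |\nabla u - R|^2 \wedge |\nabla u - R|^p \, dx \leq C \int_\Omega \operatorname{dist}(\nabla u, SO(2))^2 \wedge \operatorname{dist}(\nabla u, SO(2))^p \, dx.
\]
Combining this with the triangle-type bounds $\operatorname{dist}(\nabla u, SO(2)) \leq \operatorname{dist}(\beta, SO(2)) + |K|$, $|\beta - R| \leq |\nabla u - R| + |K|$, and the elementary subadditivity $(a+b)^2 \wedge (a+b)^p \leq C_p (a^2 \wedge a^p + b^2 \wedge b^p)$, reduces the task to bounding $\int_\Omega |K|^2 \wedge |K|^p$ by a multiple of $|\mu|(\Omega)^2$. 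For the $L^2$-piece one simply uses $\int |K^{(1)}|^2 \wedge |K^{(1)}|^p \leq \lm K^{(1)} \rm_{L^2}^2 \leq C|\mu|(\Omega)^2$, and for the $L^p$-piece one uses $\int |K^{(2)}|^2 \wedge |K^{(2)}|^p \leq \lm K^{(2)} \rm_{L^p}^p \leq C|\mu|(\Omega)^p$.

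The main obstacle is the corrector step: the naive choice $K_0 = \nabla^\perp G * \mu$ from the Newtonian potential has $|K_0(x)| \sim |\mu|(\Omega)/|x-x_0|$ near an atom $x_0$ of $\mu$ and therefore a logarithmically divergent $L^2$-norm, which is precisely what Theorem \ref{thm: bourgainbrezis} is designed to repair by offloading the truly singular part of $\mu$ into a $W^{-1,p}$-component handled by the $L^p$-norm of the corresponding corrector (which tolerates $1/r$-singularities because $p<2$), while the remainder lives in $H^{-1}$ and genuinely produces a sharp $L^2$-bound. A further delicate point is to arrange the mixed-growth integral of $K$ so that the total estimate scales as $|\mu|(\Omega)^2$ rather than $|\mu|(\Omega)^p$ for small $|\mu|(\Omega)$; this should be handled by a rescaling argument in the spirit of the one used in the proof of Theorem \ref{thm: bourgainbrezis} to separate the $H^{-1}$- and $W^{-1,p}$-estimates, reducing the analysis to the case $|\mu|(\Omega) \geq 1$.
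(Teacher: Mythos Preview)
Your plan has a real gap at the exponent step you yourself flag as ``delicate.'' Applying Theorem~\ref{thm: bourgainbrezis} with $a=0$ and $b=\operatorname{div}\mu$ gives $\lm B\rm_{W^{-1,p}}\leq C\lm b\rm_{W^{-2,p}}\leq C|\mu|(\Omega)$, hence only $\int_\Omega |K^{(2)}|^2\wedge|K^{(2)}|^p\leq \lm K^{(2)}\rm_{L^p}^p\leq C|\mu|(\Omega)^p$. For $|\mu|(\Omega)<1$ this is strictly larger than $|\mu|(\Omega)^2$, so the final bound you obtain is $\int \operatorname{dist}^2\wedge\operatorname{dist}^p+|\mu|(\Omega)^2+|\mu|(\Omega)^p$, not the claimed one. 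The rescaling argument you propose cannot repair this: under $\Omega\mapsto R\Omega$, $\beta\mapsto\beta(\cdot/R)$, one has $|\operatorname{curl}\beta_R|(R\Omega)=R|\mu|(\Omega)$, while the embedding constant for $\mathcal{M}\hookrightarrow W^{-1,p}$ (coming from $W^{1,p'}_0\hookrightarrow L^\infty$) scales like $R^{1-2/p'}=R^{2/p-1}$. Thus $\lm K^{(2)}_R\rm_{L^p(R\Omega)}^p$ scales like $(R^{2/p-1}\cdot R)^p=R^2$, exactly the same as every other term in the inequality. The whole estimate is scale-invariant, and rescaling is a tautology; it cannot turn $|\mu|^p$ into $|\mu|^2$. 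This is different from the situation in the proof of Theorem~\ref{thm: bourgainbrezis}, where the $H^{-2}$- and $W^{-2,p}$-inputs are \emph{independent} data scaling with different powers of $R$, which is precisely what allows the separation.

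The paper's route supplies the missing idea: one must first gain information from $\beta$ itself before invoking Theorem~\ref{thm: bourgainbrezis}. Concretely, the paper solves $\Delta v=\operatorname{curl}\beta$, uses the sharp measure estimate $\lm\nabla v\rm_{L^{2,\infty}}\leq C|\mu|(\Omega)$, and applies a preliminary \emph{weak-type} rigidity (Proposition~\ref{prop weaktyperigidity}) to $\nabla u=\beta-\nabla vJ$. This produces a rotation $Q$ and the estimate $\lm\vartheta\rm_{L^{2,\infty}}^2\leq C\delta$ for the angle field $\vartheta$ to the nearest rotation. After linearizing, one obtains a splitting $\beta-R_{\mathrm{lin}}(\vartheta)=h_1+h_2$ with $\lm h_1\rm_{L^p}^p\leq C\delta$ and $\lm h_2\rm_{L^2}^2\leq C\delta$, and only \emph{then} applies Theorem~\ref{thm: bourgainbrezis} with $b=\operatorname{div}((\operatorname{curl}h_1)^\perp)$, so that $\lm b\rm_{W^{-2,p}}^p\leq C\delta$ already carries the correct power. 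The point is that the $W^{-2,p}$-input to Bourgain--Br\'ezis is not the raw $\operatorname{div}\mu$ but a piece whose $p$-th power is controlled by $\delta$; this is exactly what your direct approach misses.
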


We start by observing the following easy triangle-inequality.
\begin{lemma}\label{lemma: triangle}
 Let $m \in \mathbb{N}$ and $1 < p< 2$. There exists a constant $C > 0$ such that for all $a,b \in \R^{m}$ it holds 
 \begin{equation*}
  \left|a + b\right|^2 \wedge \left|a + b \right|^p \leq C \left( |a|^2 \wedge |a|^p + |b|^2 \wedge |b|^p \right).
 \end{equation*}
\end{lemma}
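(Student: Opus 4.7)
The plan is to reduce the vector inequality to a scalar inequality for the function $f: [0,\infty) \to [0,\infty)$ defined by $f(t) = t^2 \wedge t^p$. Specifically, I aim to show that $f$ is monotone non-decreasing and satisfies a subadditivity estimate of the form
\begin{equation*}
f(s+t) \leq C \bigl(f(s) + f(t)\bigr) \qquad \text{for all } s, t \geq 0.
\end{equation*}
Once this is in hand, the triangle inequality $|a+b| \leq |a| + |b|$ together with monotonicity of $f$ immediately gives
\begin{equation*}
f(|a+b|) \leq f(|a| + |b|) \leq C\bigl(f(|a|) + f(|b|)\bigr),
\end{equation*}
which is the desired conclusion.

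For the scalar estimate I would first note the explicit form of $f$: since $1 < p < 2$, one has $f(t) = t^2$ for $t \in [0,1]$ and $f(t) = t^p$ for $t \in [1,\infty)$, and $f$ is clearly continuous and non-decreasing. The key property is a doubling estimate $f(2t) \leq 4 f(t)$, which can be checked by a short case analysis on whether $2t \leq 1$, $t \geq 1$, or $1/2 \leq t \leq 1$; in each regime the verification reduces to comparing the exponents $2$ and $p < 2$. Given monotonicity and doubling, I would write $s + t \leq 2\max\{s,t\}$ and estimate
\begin{equation*}
f(s+t) \leq f\bigl(2\max\{s,t\}\bigr) \leq 4 f(\max\{s,t\}) \leq 4\bigl(f(s) + f(t)\bigr),
\end{equation*}
which yields the claim with $C = 4$.

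There is no real obstacle here; everything reduces to elementary one-variable inequalities. The mildly delicate point is handling the transition region around $t=1$ where the two branches of $f$ meet, but this is handled by the simple case distinction above.
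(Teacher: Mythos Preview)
Your argument is correct and, like the paper's, boils down to an elementary case distinction; the paper simply remarks that the inequality follows by splitting into the cases $|a+b|\leq 1$ and $|a+b|>1$, whereas you organize the same analysis through the doubling estimate $f(2t)\leq 4f(t)$ for $f(t)=t^2\wedge t^p$. The two routes are equivalent in spirit, and your formulation has the minor advantage of yielding the explicit constant $C=4$.
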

\begin{proof}
The result can easily be obtained by distinguishing the cases $|a+b| \leq 1, |a+b| >1$.
\end{proof}

 Next, we prove a simple decomposition result which we need in the proof of a preliminary weak-type rigidity estimate, Proposition \ref{prop weaktyperigidity}.

\begin{lemma} \label{lemma: decompose}
 Let $U \subseteq \R^n$ and $1 < p < 2$. 
Then for every $k>0$ there exists a constant $C(k)>0$ such that for every two nonnegative functions $f \in L^{2,\infty}(U), \, g \in L^p(U)$ there exist functions $\tilde f \in L^{2,\infty}(U)$ and $\tilde g \in L^p(U)$ such that $f + g = \tilde f + \tilde g$, $\tilde g \in \{0\} \cup (k,\infty]$, $\tilde f \leq k$, and 
 \begin{equation}
 \lm \tilde f \rm_{L^{2,\infty}(U)}^2 + \lm \tilde g \rm_{L^p(U)}^p \leq C(k) \left(\lm f \rm_{L^{2,\infty}(U)}^2 + \lm g \rm_{L^p(U)}^p\right).
 \end{equation}
\end{lemma}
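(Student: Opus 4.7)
The plan is to build the decomposition by pointwise truncation at level $k$. Concretely, I would set $A := \{f+g > k\}$ and define
\[
\tilde{g} := (f+g)\,\mathbf{1}_A, \qquad \tilde{f} := (f+g)\,\mathbf{1}_{U\setminus A}.
\]
By construction $\tilde{f}+\tilde{g}=f+g$, $\tilde{f}\leq k$ everywhere, and $\tilde{g}$ is either $0$ (off $A$) or strictly greater than $k$ (on $A$). So the structural requirements are immediate, and only the two norm estimates need work.

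For the bound on $\tilde{f}$ in $L^{2,\infty}(U)$, the crucial observation is that $\tilde{f}\leq k$, so the weak-$L^2$ supremum only needs to be controlled for $t\in(0,k)$. Using $\{\tilde{f}>t\}\subseteq\{f>t/2\}\cup\{g>t/2\}$ together with the definition of the $L^{2,\infty}$-norm and Chebyshev's inequality gives
\[
t^2\,|\{\tilde f>t\}| \;\leq\; 4\,\|f\|_{L^{2,\infty}}^2 + 2^{p}\,t^{2-p}\,\|g\|_{L^p}^p.
\]
Here $p<2$ enters to bound $t^{2-p}\leq k^{2-p}$ on $(0,k]$, which yields $\|\tilde{f}\|_{L^{2,\infty}}^2 \leq 4\|f\|_{L^{2,\infty}}^2 + 2^p k^{2-p}\|g\|_{L^p}^p$.

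For the bound on $\tilde{g}$ in $L^p(U)$, I would use $(f+g)^p\leq 2^{p-1}(f^p+g^p)$ and handle the two contributions separately. The $g$-piece is trivially controlled by $\|g\|_{L^p}^p$. For $\int_A f^p$, split $A$ into $\{f>k/2\}$ and its complement inside $A$: on $\{f\leq k/2\}\cap A$ we must have $g>k/2$, so $\int_{\{f\leq k/2\}\cap A} f^p \leq (k/2)^p|\{g>k/2\}| \leq \|g\|_{L^p}^p$ by Chebyshev. On $\{f>k/2\}$, I use the layer-cake formula
\[
\int_{\{f>k/2\}} f^p\,dx \;=\; (k/2)^p|\{f>k/2\}| + \int_{k/2}^\infty p\,t^{p-1}|\{f>t\}|\,dt,
\]
together with $|\{f>t\}|\leq t^{-2}\|f\|_{L^{2,\infty}}^2$; the second integral converges precisely because $p<2$ and the whole expression is bounded by $C(k,p)\|f\|_{L^{2,\infty}}^2$. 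Combining these gives $\|\tilde{g}\|_{L^p}^p \leq C(k)(\|f\|_{L^{2,\infty}}^2 + \|g\|_{L^p}^p)$.

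There is no genuine obstacle here; the only subtle point is to notice that $p<2$ is what makes everything fit in two places at once, namely the boundedness of $t^{2-p}$ on $(0,k]$ for the weak-$L^2$ estimate on $\tilde{f}$, and the integrability of $f^p$ on the set $\{f>k/2\}$ for $f$ only in $L^{2,\infty}$. Combining the two bounds produces the claim with a constant $C(k)$ that blows up like $k^{p-2}$ as $k\to 0$, which is consistent with the statement.
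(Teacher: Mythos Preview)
Your proof is correct and follows essentially the same approach as the paper: the same pointwise truncation $\tilde f=(f+g)\mathbf{1}_{\{f+g\le k\}}$, $\tilde g=(f+g)\mathbf{1}_{\{f+g>k\}}$, the same splitting of $A$ into $\{f\le k/2\}$ and $\{f>k/2\}$ for the $L^p$ estimate on $\tilde g$, and the same layer-cake computation for $\int_{\{f>k/2\}}f^p$. The only cosmetic difference is in the $L^{2,\infty}$ bound on $\tilde f$: the paper bounds $\tilde f\le f+g\,\mathbf{1}_{\{g\le k\}}$ and uses the quasi-triangle inequality together with $\|g\,\mathbf{1}_{\{g\le k\}}\|_{L^2}^2\le k^{2-p}\|g\|_{L^p}^p$, whereas you work directly with the distribution function; both lead to the same estimate.
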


\begin{proof}
 Let $k > 0$. 
Define $\tilde f = ( f+ g ) \1_{\{f+g\leq k\}}$ and $\tilde g = (f+g) \1_{\{f + g > k\}}$. 
 Then the first two properties are clearly satisfied. 
 Moreover, we can estimate
 \begin{align*} 
 \lm \tilde f \rm_{L^{2,\infty}(U)}^2 &\leq 4 \lm f \rm_{L^{2,\infty}(U)}^2 + 4 \lm g \1_{\{g \leq k\}} \rm_{L^2(U)}^2 
\\ &\leq 4 \lm f \rm_{L^{2,\infty}(U)}^2 + 4 k^{2-p} \lm g \1_{\{g \leq k\}} \rm_{L^p(U)}^p 
\\ &\leq C(k) \left(\lm f \rm_{L^{2,\infty}(U)}^2 + \lm g \rm_{L^p(U)}^p\right).
 \end{align*}
For $\tilde g$, notice that $f \1_{\{f + g > k\}} = f \1_{\{f + g > k\}} \1_{\{ f \leq \frac k2\}} + f \1_{\{f + g > k\}} \1_{\{ f > \frac k2\}} \leq g + f \1_{\{f > \frac k2\}}$.
Thus, we can conclude that $\lm \tilde g \rm_{L^p(U)}^p \leq C \left(\lm f \1_{\{f > \frac k2\}} \rm_{L^p(U)}^p + \lm g \rm_{L^p(U)}^p\right)$.
In addition,
\begin{align}
 \lm \1_{\{f > \frac k2\}} f \rm_{L^p(U)}^p = &\int_0^{\infty} p t^{p-1} \mathcal{L}^n(\{ \1_{\{f > \frac k2\}} f > t \}) \, dt \label{eq: weak2strongp}\\
 =&\int_0^{\frac k2} p t^{p-1} \mathcal{L}^n\left(\left\{ \1_{\{f > \frac k2\}} f > t \right\}\right) \, dt \\ &+ \int_{\frac k2}^{\infty} p t^{p-1} \mathcal{L}^2\left(\left\{ \1_{\{f > \frac k2\}} f > t \right\}\right) \, dt \nonumber \\
 \leq &\left(\frac{k}2\right)^p \mathcal{L}^n\left(\left\{ f > \frac k2 \right\}\right) + \int_{\frac k2}^{\infty} p t^{p-3} \lm f \rm_{L^{2,\infty}(U)}^2 \, dt \nonumber\\
 \leq &\left(\frac{k}2\right)^p \mathcal{L}^n\left(\left\{ f > \frac k2 \right\}\right) + \frac2{2-p} \left( \frac k2\right)^{p-2} \lm f \rm_{L^{2,\infty}(U)}^2 \nonumber\\
 \leq &C(k) \lm f \rm_{L^{2,\infty}(U)}^2. \nonumber
\end{align}
\end{proof}

 As a second ingredient for the proof of the preliminary mixed-growth rigidity result we recall the following truncation argument from \cite[Proposition A.1]{FrJaMu02}.

\begin{proposition} \label{prop: cutoff}
 Let $U \subseteq \R^n$ be a bounded Lipschitz domain and $m \geq 1$. Then there is a constant $c_1 = c_1(U)$ such that for all $u \in W^{1,1}\left(U,\R^m \right)$ and all $\lambda > 0$ there exists 
 a measurable set $E \subseteq U$ such that
 \begin{enumerate}
  \item $u$ is $c_1\lambda$-Lipschitz on $E$,
  \item $\mathcal{L}^n(U \setminus E) \leq \frac{c_1}{\lambda} \int_{\{|\nabla u| > \lambda\}} |\nabla u| \, dx$ \label{item: cutoff2}.
 \end{enumerate}
\end{proposition}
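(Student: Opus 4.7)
The plan is to use the Hardy--Littlewood maximal function together with the classical pointwise Sobolev estimate, following the standard Lusin-type truncation argument. First I would extend $u$ to $\tilde u \in W^{1,1}(\R^n;\R^m)$ via a bounded extension operator (which exists because $U$ is a Lipschitz domain), chosen so that $\tilde u = u$ on $U$ and so that the one-sided tail bounds on $|\nabla \tilde u|$ are controlled by those of $|\nabla u|$ up to a constant depending only on $U$.

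Next I would define the Hardy--Littlewood maximal function
\[
M(\nabla \tilde u)(x) = \sup_{r>0} \frac{1}{|B_r(x)|} \int_{B_r(x)} |\nabla \tilde u(y)|\,dy,
\]
and set $E = \{ x \in U : M(\nabla \tilde u)(x) \leq \lambda \}$. The classical pointwise inequality for Sobolev functions,
\[
|\tilde u(x) - \tilde u(y)| \leq C(n)\bigl(M(\nabla \tilde u)(x) + M(\nabla \tilde u)(y)\bigr)|x - y|,
\]
valid for almost every pair of Lebesgue points $x,y$, immediately shows that the precise representative of $u$ is $2C(n)\lambda$-Lipschitz on $E$, yielding item $(1)$ with $c_1 = 2C(n)$ (modulo the extension constant).

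For the refined measure estimate in item $(2)$, the crucial trick is the splitting $|\nabla \tilde u| = g_1 + g_2$ with $g_1 = |\nabla \tilde u| \mathbf{1}_{\{|\nabla \tilde u| \leq \lambda/2\}}$ and $g_2 = |\nabla \tilde u| \mathbf{1}_{\{|\nabla \tilde u| > \lambda/2\}}$. Since $\|g_1\|_\infty \leq \lambda/2$, one has $M(g_1) \leq \lambda/2$ pointwise, so $U \setminus E \subseteq \{M(g_2) > \lambda/2\}$, and the weak-type $(1,1)$ estimate for $M$ yields
\[
\mathcal{L}^n(U \setminus E) \leq \frac{C}{\lambda} \int_{\{|\nabla \tilde u| > \lambda/2\}} |\nabla \tilde u|\,dx.
\]
Transferring the integral back to $U$ via the extension bound, and absorbing the harmless relabeling $\lambda \mapsto 2\lambda$ into $c_1(U)$, gives item $(2)$.

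The only genuinely delicate point is the extension step: one needs $|\nabla \tilde u|$ outside $U$ to be controlled pointwise, or at least in a one-sided $L^1$-tail sense, by $|\nabla u|$ near $\partial U$, so that $\int_{\{|\nabla \tilde u|>t\}}|\nabla \tilde u|\,dx$ is comparable to $\int_{\{|\nabla u|>t/c\}}|\nabla u|\,dx$. Stein's extension for Lipschitz domains, or a Whitney-type reflection across $\partial U$, delivers this, with a constant that depends on the Lipschitz character of $\partial U$ and is absorbed into $c_1(U)$.
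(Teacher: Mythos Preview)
The paper does not prove this proposition; it merely recalls it from \cite[Proposition~A.1]{FrJaMu02} and uses it as a black box. Your argument via extension, the Hardy--Littlewood maximal function, the pointwise Sobolev inequality, and the splitting $|\nabla\tilde u|=g_1+g_2$ to upgrade the weak-type $(1,1)$ bound to the sharper tail estimate is exactly the standard route, and is in fact the argument given in the cited reference. So your proposal is correct and aligned with the source the paper defers to.

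One remark on the point you yourself flag as delicate: the claim that $\int_{\{|\nabla\tilde u|>t\}}|\nabla\tilde u|\,dx$ is controlled by $\int_{\{|\nabla u|>t/c\}}|\nabla u|\,dx$ does not follow from boundedness of a generic extension operator on $W^{1,1}$ alone; you need a \emph{pointwise} bound of the form $|\nabla\tilde u(x)|\leq C\,M(|\nabla u|\mathbf{1}_U)(x)$, or equivalently an extension built from local averaging/reflection so that level sets of $|\nabla\tilde u|$ outside $U$ are images (up to bounded multiplicity and distortion) of level sets of $|\nabla u|$ inside $U$. The Whitney-reflection construction for Lipschitz domains delivers exactly this, so your sketch is fine, but a reader should be warned that quoting ``Stein extension is bounded on $W^{1,1}$'' is not by itself enough for item~(2).
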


\noindent With the use of this result, we are now able to prove the preliminary weak-type rigidity estimate.
In \cite{CoDoMu14}, the authors prove rigidity estimates for fields whose distance to $SO(n)$ is either the sum of an $L^p$- and an $L^q$-function, or in a weak space $L^{p,\infty}$.
Our result is a combination of these results.

\begin{proposition} \label{prop weaktyperigidity}
 Let $1<p < 2$, $n\geq 2$, and $U \subseteq \R^n$ open and bounded with Lipschitz boundary.
 Let $u \in W^{1,1}(U;\R^{n \times n})$ such that there exist $f \in L^{2,\infty}(U)$ and $g \in L^p(U)$ satisfying
 \begin{equation*}
  \operatorname{dist}(\nabla u,SO(n)) = f + g.
 \end{equation*}
Then there exist  matrix fields $F \in L^{2,\infty}(U;\R^{n\times n})$ and $G \in L^p(U;\R^{n\times n})$ and a rotation $R \in SO(n)$ such that
\begin{equation*}
 \nabla u = R + G + F
\end{equation*}
and 
\begin{equation*}
 \lm F \rm_{L^{2,\infty}(U;\R^{n\times n})}^2 + \lm G \rm_{L^p(U;\R^{n\times n})}^p \leq C (\| f \|_{L^{2,\infty}(U)}^2 + \| g \|_{L^p(U)}^p).
\end{equation*}
The constant $C$ does not depend on $u,f,g$.
\end{proposition}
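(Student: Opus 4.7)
The plan is to apply a truncation of $u$ followed by the classical Friesecke--James--M\"uller rigidity, splitting the error according to whether the gradient is bounded or large; this is in the spirit of Conti--Dolzmann--M\"uller's treatment of mixed and weak-type rigidity.

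First, I would invoke Lemma \ref{lemma: decompose} with threshold $k=1$ to rewrite $\operatorname{dist}(\nabla u, SO(n)) = \tilde f + \tilde g$ with $\tilde f \leq 1$ in $L^{2,\infty}(U)$ and $\tilde g \in \{0\} \cup (1,\infty)$ in $L^p(U)$. On the good set $\{\tilde g = 0\}$ one has $|\nabla u| \leq 2$, so the bad set $\{|\nabla u| > 2\}$ is contained in $\{\tilde g > 1\}$, where $|\nabla u| \leq 1 + \tilde f + \tilde g \leq C\tilde g$. Proposition \ref{prop: cutoff} applied with $\lambda=2$ then produces a set $E \subseteq U$ on which $u$ is $c_1$-Lipschitz and
\begin{equation*}
\mathcal{L}^n(U \setminus E) \leq C \int_{\{|\nabla u|>2\}}|\nabla u|\,dx \leq C \int \tilde g\, dx \leq C \int \tilde g^p\, dx,
\end{equation*}
where the last inequality uses $\tilde g > 1$ on its support. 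By McShane's theorem I extend $u|_E$ to a $C$-Lipschitz map $v$ on $U$.

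Applying the classical geometric rigidity to $v$ yields a rotation $R \in SO(n)$. I would then set $F := (\nabla u - R)\mathbf{1}_E$ and $G := (\nabla u - R)\mathbf{1}_{U\setminus E}$, so that $\nabla u = R + F + G$. The $L^p$-bound on $G$ reduces to bounding $\int_{U\setminus E}|\nabla u|^p\,dx + \mathcal{L}^n(U\setminus E)$, both controlled by $\|\tilde g\|_{L^p}^p$ via $|\nabla u| \leq C\tilde g$ on the bad set and the measure estimate above.

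The main obstacle is to obtain the $L^{2,\infty}$-bound on $F$. A single application of the classical rigidity only gives $L^2$ control by $\int(\tilde f+\tilde g)^2\,dx$, and $\int \tilde f^2$ is in general not finite when $\tilde f \in L^{2,\infty}$. I plan to bypass this by running the truncation-plus-rigidity construction at every scale $t$ rather than at a single level: for each $t > 1$, let $E_t$ be the set produced by Proposition \ref{prop: cutoff} with $\lambda = t$, let $v_t$ be the $Ct$-Lipschitz McShane extension, and let $R_t$ be the rigidity rotation for $v_t$. The weak-$L^2$ property of $\tilde f$ gives
\begin{equation*}
\mathcal{L}^n(U \setminus E_t) \leq C t^{-2}\|\tilde f\|_{L^{2,\infty}}^2 + C\|\tilde g\|_{L^p}^p,
\end{equation*}
while the strong rigidity estimate for $v_t$, combined with Chebyshev's inequality at threshold $t$, bounds $\mathcal{L}^n(\{|\nabla v_t - R_t| > t\})$ by $Ct^{-2}(\|\tilde f\|_{L^{2,\infty}}^2 + \|\tilde g\|_{L^p}^p)$ after using that $\tilde g$ is bounded by $Ct$ on $E_t$ so $\tilde g^2 \leq Ct^{2-p}\tilde g^p$ there. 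A stability estimate comparing $R_t$ with $R_{2t}$ on the overlap $E_t \cap E_{2t}$ shows that $R_t$ converges to a limit rotation, which we identify with the $R$ chosen above up to an admissible error. Summing these ingredients via a layer-cake argument yields $\mathcal{L}^n(\{|F| > t\}) \leq C t^{-2}(\|\tilde f\|_{L^{2,\infty}}^2 + \|\tilde g\|_{L^p}^p)$, and hence the desired bound on $\|F\|_{L^{2,\infty}}^2$.
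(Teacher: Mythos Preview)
Your overall architecture---truncate via Proposition~\ref{prop: cutoff}, apply rigidity to the Lipschitz extension, and split the remainder along $E$ and $U\setminus E$---matches the paper's, but there is one substantive divergence. The paper does \emph{not} run a multi-scale truncation to obtain the weak-type bound on $F$; instead it applies the $L^{2,\infty}$ rigidity estimate of Conti--Dolzmann--M\"uller \cite[Corollary~4.1]{CoDoMu14} directly to the single Lipschitz truncation $u_M$, yielding $\|\nabla u_M-R\|_{L^{2,\infty}}^2\leq C\|\operatorname{dist}(\nabla u_M,SO(n))\|_{L^{2,\infty}}^2$ in one stroke. Because $\operatorname{dist}(\nabla u_M,SO(n))\leq f+2M\mathbf 1_{U\setminus E}$ (once the threshold $k$ in Lemma~\ref{lemma: decompose} is chosen as $2M$, not $1$), the weak-type bound follows immediately from the measure estimate on $U\setminus E$. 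Your multi-scale construction is in effect a \emph{re-derivation} of that cited result, so it is not wrong, but it is considerably longer and the sketch you give is not yet complete.

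Two concrete points where the sketch would need work. First, applying the classical $L^2$ rigidity to $v_t$ and then Chebyshev gives a level-set bound proportional to $t^{-2}\int_U\operatorname{dist}(\nabla v_t,SO(n))^2\,dx$, and the contribution $\int_U\tilde f^2\,dx$ is \emph{not} controlled by $\|\tilde f\|_{L^{2,\infty}}^2$ uniformly in $\tilde f$ (it picks up a factor $\log(|U|/\|\tilde f\|_{L^{2,\infty}}^2)$). To avoid this logarithmic loss you must either apply rigidity in an $L^q$ space with $q<2$ and interpolate, or carry out the rotation-matching across dyadic scales exactly as in~\cite{CoDoMu14}; your one-sentence ``stability estimate'' hides precisely this difficulty. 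Second, minor but relevant: your choices $k=1$ and $\lambda=2$ do not give $|\nabla u|\leq C\tilde g$ on $\{|\nabla u|>2\}$ when $n\geq 2$ (since $|\nabla u|\leq\sqrt n+\operatorname{dist}(\nabla u,SO(n))$); the paper takes $\lambda=2n$ and $k=2M=4c_1n$ for exactly this reason, which also forces $g=0$ on $E$ and hence $\operatorname{dist}(\nabla u_M,SO(n))=f$ there. Once you adopt these thresholds and simply cite the weak-$L^2$ rigidity instead of reproving it, your argument collapses to the paper's.
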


\begin{proof}
 Without loss of generality we may assume that $f$ and $g$ are nonnegative. 
 According to Lemma \ref{lemma: decompose}, we may also assume that $f \leq k$ and $g \in \{0\} \cup (k,\infty)$ where $k$ will be fixed later.  \\
First, we apply Proposition \ref{prop: cutoff} for $\lambda = 2n$ to obtain a measurable set $E \subseteq U$ such that $u$ is Lipschitz continuous on $E$ with Lipschitz constant $M = 2c_1n$.
Let $u_M$ be a Lipschitz continuous extension of $u_{|E}$ to $U$ with the same Lipschitz constant. 
In particular, $u_M = u$ on $E$.
Set $k = 2M$. 
Then we obtain
\begin{equation}\label{eq: distum}
 \operatorname{dist}( \nabla  u_M, SO(2)) \leq f + 2 M \1_{U \setminus E}.
\end{equation}
Indeed, notice that (we may assume that $c_1 \geq 1$)
\begin{equation}\label{eq: duM small}
 \operatorname{dist}(\nabla u_M, SO(2)) \leq 2c_1n + \sqrt{n} \leq 2M.
\end{equation}
Hence, we derive $\operatorname{dist}(\nabla u_M, SO(2)) \leq 2M$ on $U \setminus E$.
On $E$, we obtain that 
\begin{equation*}
 \operatorname{dist}(\nabla u_M, SO(2)) = \operatorname{dist}(\nabla u, SO(2)) = f + g.
\end{equation*}
As $g \in \{0\} \cup (2M,\infty]$, in view of equation \eqref{eq: duM small}, we find $\operatorname{dist}(\nabla u_M, SO(2)) = f$ on $E$. 
This shows \eqref{eq: distum}.\\
By applying the $L^{2,\infty}$ rigidity estimate from \cite[Corollary 4.1]{CoDoMu14}, we find a rotation $R \in SO(2)$ such that
\begin{align}
 \lm \nabla  u_M - R \rm_{L^{2,\infty}(U;\R^{n \times n})}^2 &\leq C \lm \operatorname{dist}(\nabla  u_M, SO(2)) \rm_{L^{2,\infty}(U)}^2 \nonumber \\
 &\leq 4 C \lm f \rm_{L^{2,\infty}(U)}^2 + 16 C M^2 \lm \1_{U \setminus E} \rm_{L^{2,\infty}(U)}^2. \label{eq: weak estimate}
\end{align}
Next, note that if $|\nabla u| > 2n$, then 
\begin{equation} \label{eq: duestimate}
 |\nabla u| \leq \sqrt{n} + \operatorname{dist}(\nabla u,SO(n)) \leq 2 \operatorname{dist}(\nabla u,SO(n)) =2 (f + g) \leq 4 \max \{f,g\}.
\end{equation}
Using propert \ref{item: cutoff2}.~from Proposition \ref{prop: cutoff} and \eqref{eq: duestimate}, we estimate
\begin{align*}
 \mathcal{L}^n(U \setminus E) &\leq \frac {c_1} {2n} \int_{\{|\nabla u| > 2n \}} |\nabla u| \, dx \\
 &\leq \frac {c_1} {2n} \int_{\{4f \geq n\}} 4f \,dx + \frac{c_1}{2n} \int_{\{4g \geq n\}} 4g \,dx \\
 &\leq \frac {c_1} {2 n^p} \int_{\{4f \geq n\}} 4^pf^p \,dx + \frac{c_1}{2 n^p} \int_{\{4g \geq n\}} 4^p g^p \,dx \\
 &\leq C \left( \lm f \1_{\{4f \geq n\}}\rm_{L^p(U)}^p + \lm g \rm_{L^p(U)}^p\right) \\
 &\leq C \left( \lm f \rm_{L^{2,\infty}(U)}^2 + \lm g \rm_{L^p(U)}^p\right), 
\end{align*}
where we used a similar estimate as in \eqref{eq: weak2strongp} for the last inequality.
In particular, it follows from \eqref{eq: weak estimate} that
\begin{equation*}
 \lm \nabla u_M - R \rm_{L^{2,\infty}(U;\R^{n \times n})}^2 \leq C (\| f \|_{L^{2,\infty}(U)}^2 + \| g \|_{L^p(U)}^p).
\end{equation*}
Hence, writing $\nabla u - R = \nabla u - \nabla u_M + \nabla u_M - R$, it remains to control $\nabla u - \nabla u_M$.
Clearly, we only have to consider $\nabla u - \nabla u_M$ on $U \setminus E$. 
On $U \setminus E$, it holds the pointwise estimate
\begin{equation*}
 |\nabla u - \nabla u_M| \leq |\nabla u| + 2c_1n \leq \operatorname{dist}(\nabla u,SO(2)) + 2M \1_{U \setminus E} = f + g + 2M \1_{U \setminus E}.
\end{equation*}
As before, we have that $\lm \1_{U \setminus E} \rm_{L^{2,\infty}(U)}^2 \leq C (\| f \|_{L^{2,\infty}(U)}^2 + \| g \|_{L^p(U)}^p)$.
This finishes the proof.
\end{proof}

 Armed with this weak-type rigidity estimate we are now able to prove Theorem \ref{theorem: generalizedrigidity}.
The proof is similar to the one of the corresponding statement in \cite[Theorem 3.3]{MuScZe14} but uses  quantities with mixed growth instead of quantities in $L^2$, in particular the Bourgain-Br\'ezis type estimate for mixed growth, Theorem  \ref{thm: bourgainbrezis}.

\begin{proof}[Proof of Theorem \ref{theorem: generalizedrigidity}]
 First we define \[
 \delta = \int_{\Omega} \operatorname{dist}(\beta, SO(2))^2 \wedge \operatorname{dist}(\beta, SO(2))^p dx 
 + | \operatorname{curl} \beta|(\Omega)^2.\] 
 As $1<p<2$, the embedding $\mathcal{M}(\Omega;\R^2) \hookrightarrow W^{-1,p}(\Omega;\R^2)$ is bounded.  
Hence, there exists a unique solution $v$ to the problem
 \begin{equation} \label{eq: laplace}
 \begin{cases}
 \Delta v = \operatorname{curl} \beta, \\
 v \in W^{1,p}_0(\Omega;\R^{2}).
 \end{cases}
 \end{equation}
 Define $\tilde \beta =\nabla v J$ where
 \begin{equation*}
 J = 
 \begin{pmatrix}
  0 & -1 \\ 1 & 0
 \end{pmatrix}.
 \end{equation*} 
The optimal regularity estimate for elliptic equations with measure valued right hand side yields (see, for example, \cite{DoHuMu00})
 \begin{equation} \label{eq: estimatebeta}
  \lm \tilde \beta \rm_{L^{2,\infty}(U;\R^{2\times 2})} \leq C \left| \operatorname{curl} \beta \right|(\Omega).
 \end{equation}
 In addition, we have that $\operatorname{curl} \tilde \beta = \operatorname{curl} \beta $. 
Hence, there exists a function $u \in W^{1,p}(\Omega;\R^2)$ such that $\nabla u = \beta - \tilde \beta$. 
Clearly,
\begin{equation}\label{eq: traingle}
 \left| \operatorname{dist}(\nabla u, SO(2)) \right| \leq |\tilde \beta| + \left| \operatorname{dist}(\beta,SO(2)) \right|.
\end{equation}
Moreover, observe that  for $f_1 = \operatorname{dist}(\beta, SO(2)) \1_{\{|\operatorname{dist}(\beta, SO(2))|\leq 1\}}$ and $f_2 = \operatorname{dist}(\beta, SO(2)) \1_{\{|\operatorname{dist}(\beta, SO(2))|> 1\}}$ we have
\begin{align*}
\operatorname{dist}(\beta, SO(2)) = f_1 + f_2
 \end{align*}
 and
$\lm f_1 \rm_{L^2(\Omega)}^2 \leq \delta$ and $\lm f_2 \rm_{L^p(\Omega)}^p \leq \delta$.
Combining this decomposition with \eqref{eq: estimatebeta}, and \eqref{eq: traingle} proves the existence of functions $g_1 \in L^{2,\infty}(\Omega)$ and $g_2 \in L^p(\Omega)$ such that $\operatorname{dist}(\nabla u, SO(2)) = g_1 + g_2$,
 \begin{align*}
\lm g_1 \rm_{L^{2,\infty}(\Omega)}^2 \leq 4 \lm \tilde \beta \rm_{L^{2,\infty}(\Omega)}^2 + 4 \lm f_1 \rm_{L^{2,\infty}(\Omega)}^2 \leq C \delta \text{ and }
\lm g_2 \rm_{L^p(\Omega)}^p \leq \lm f_2 \rm_{L^p(\Omega)}^p \leq C \delta. 
\end{align*}
By Proposition \ref{prop weaktyperigidity}, we derive the existence of $Q \in SO(2)$, $G_1 \in L^{2,\infty}(\Omega;\R^{2 \times 2})$ and $G_2 \in L^p(\Omega;\R^{2 \times 2})$ such that
\begin{equation*}
 \nabla u - Q= G_1 + G_2, \, \lm G_1 \rm_{L^{2,\infty}}^2 \leq C \delta \text{ and } \lm G_2 \rm_{L^{p}}^p \leq C \delta.
\end{equation*}
Without loss of generality we may assume that $Q = Id$ (otherwise replace $\beta$ by $Q^T \beta$). \\
Next, let $\vartheta: \Omega \rightarrow [-\pi,\pi)$ be a measurable function such that the corresponding rotation
\begin{equation*}
 R(\vartheta) = \begin{pmatrix}
                 \cos(\vartheta) & -\sin(\vartheta) \\ \sin(\vartheta) & \cos(\vartheta)
                \end{pmatrix}
\end{equation*}
satisfies
\begin{equation}
 |\beta(x) - R(\vartheta(x))| = \operatorname{dist}(\beta,SO(2)) \text{ for almost every } x \in \Omega. \label{eq: def R}
\end{equation}
Now, let us decompose
\begin{align}
 R(\vartheta(x)) - Id &= R(\vartheta(x)) - \beta + \beta - \nabla u + \nabla u - Id \nonumber \\
 &= R(\vartheta(x)) - \beta + \tilde \beta + G_1 + G_2. \label{eq: R-Id}
\end{align}
As $SO(2)$ is a bounded set, it is true that $| Id - R(\vartheta(x)) |^2 \leq C | Id - R(\vartheta(x)) |^p \wedge | Id - R(\vartheta(x)) |^2$. 
In addition, $|R(\vartheta(x)) - Id| \geq \frac{|\vartheta(x)|}{2}$.
Hence, by \eqref{eq: def R}, \eqref{eq: R-Id}, and the triangle inequality in Lemma \ref{lemma: triangle}, we may estimate
\begin{align*}
 \frac{|\vartheta(x)|^2}{4} \leq &|R(\vartheta(x)) - Id |^2 \\ \leq &C \left( \operatorname{dist}(\beta,SO(2))^2 \wedge \operatorname{dist}(\beta,SO(2))^p + |\tilde \beta |^2 + |G_1|^2 + |G_2|^p \right).
\end{align*}
Taking the $L^{1,\infty}$-quasinorm on both sides of the inequality we obtain
\begin{equation}
 \lm \vartheta \rm_{L^{2,\infty}(\Omega)}^2 \leq C \delta. \label{eq: weakestimatetheta}
\end{equation}
Following \cite[Theorem 3.3]{MuScZe14}, we define
\begin{equation*}
 R_{lin}(\vartheta) = \begin{pmatrix}
                       1 & -\vartheta \\ \vartheta & 1
                      \end{pmatrix}.
\end{equation*}
Using  \cite[Lemma 3.2]{MuScZe14}, we derive from \eqref{eq: weakestimatetheta} that
\begin{equation*}
 \lm R(\vartheta) - R_{lin}(\vartheta) \rm_{L^2}^2 \leq C \delta.
\end{equation*}
Thus, there exist functions $h_1 \in L^2(\Omega;\R^{2 \times 2})$ and $h_2 \in L^p(\Omega;\R^{2 \times 2})$ such that
\begin{equation}\label{eq: hs}
 \beta - R_{lin}(\vartheta) = \underbrace{\beta - R(\vartheta)}_{\in L^p(\Omega;\R^{2 \times 2}) + L^2(\Omega;\R^{2 \times 2})} + \underbrace{R(\vartheta) - R_{lin}(\vartheta)}_{\in L^2(\Omega;\R^{2 \times 2})} = h_1 + h_2,
\end{equation}
and $\lm h_1 \rm_{L^p(\Omega;\R^{2 \times 2})}^p, \, \lm h_2 \rm_{L^2(\Omega;\R^{2 \times 2})}^2 \leq C \delta$. 
By definition, we see that $\operatorname{curl} R_{lin}(\vartheta) = - \nabla \vartheta$. Hence,
\begin{equation*}
 \operatorname{curl} \beta = -\nabla \vartheta + \operatorname{curl} h_1 + \operatorname{curl} h_2,
\end{equation*}
which implies
\begin{equation*}
 \operatorname{div} \left((\operatorname{curl} \beta)^{\perp}\right) = \underbrace{\operatorname{div} \left((\operatorname{curl} h_1)^{\perp}\right)}_{\in W^{-2,p}(\Omega)} + \underbrace{\operatorname{div} \left((\operatorname{curl} h_2)^{\perp}\right)}_{\in H^{-2}(\Omega)}.
\end{equation*}
Therefore, we can apply Theorem \ref{thm: bourgainbrezis} to obtain two distributions $A \in H^{-1}(\Omega;\R^2)$ and $B \in W^{-1,p}(\Omega;\R^2)$ such that $(\operatorname{curl} \beta)^{\perp} = A + B$, 
\begin{align}\label{eq: decomositioncurlbeta}
&\lm A \rm_{H^{-1}(\Omega;\R^2)}^2 \leq C \left(|\operatorname{curl }\beta|(\Omega)^2 + \lm \operatorname{div} (\operatorname{curl} h_1)^{\perp}\rm^2_{H^{-2}(\Omega)}\right),  \\ \text{and }& \lm B \rm_{W^{-1,p}(\Omega;\R^2)}^p \leq C \lm \operatorname{div} (\operatorname{curl} h_2)^{\perp} \rm^p_{W^{-2,p}(\Omega)}.  \nonumber
\end{align}
In particular, it follows from \eqref{eq: hs} and \eqref{eq: decomositioncurlbeta} that
\begin{equation}\label{eq: estimateA}
 \lm A \rm_{H^{-1}(\Omega;\R^2)}^2 \leq C \delta \text{ and } \lm B \rm_{W^{-1,p}(\Omega;\R^2)}^p \leq C \delta.
\end{equation}
Clearly, the same holds for $\operatorname{curl }\beta$, $-A^{\perp}$ and $-B^{\perp}$. \\
Now, as $v$ is the unique solution to the linear problem \eqref{eq: laplace}, in view of \eqref{eq: decomositioncurlbeta} and \eqref{eq: estimateA} there exists a decomposition $v = v_1 + v_2$ satisfying $\lm v_1 \rm_{H^1(\Omega;\R^2)}^2 \leq C \delta$ and 
$\lm v_2 \rm_{W^{1,p}(\Omega;\R^2)}^p \leq C \delta$. 
Following the notation from the beginning of the proof, set
\begin{equation*}
 \tilde \beta_1 = \nabla v_1 J \text{ and } \tilde \beta_2 = \nabla v_2 J.
\end{equation*}
Then $\nabla u = \beta - \tilde \beta = \beta - \tilde \beta_1 - \tilde \beta_2$. Using the classical mixed growth rigidity estimate from \cite[Proposition 2.3]{MuPa13}, there exists a rotation $R \in SO(2)$ such that
\begin{equation*}
 \int_{\Omega} |\nabla u - R|^2 \wedge |\nabla u - R|^p \text{ d}x \leq C \int_{\Omega} \operatorname{dist}(\nabla u,SO(2))^2 \wedge \operatorname{dist}(\nabla u,SO(2))^p \text{ d}x.
\end{equation*}
With the use of Lemma \ref{lemma: triangle} we obtain eventually the following chain of inequalities
\begin{align*}
 &\int_{\Omega} |\beta - R|^2 \wedge |\beta - R|^p \text{ d}x \\
 &\leq C \left( \int_{\Omega} |\nabla u - R|^2 \wedge |\nabla u - R|^p \text{ d}x + \lm \tilde \beta_1 \rm_{L^2}^2 + \lm \tilde \beta_2 \rm_{L^p}^p\right)  \\
 &\leq C \left( \int_{\Omega} \operatorname{dist}(\nabla u,SO(2))^2 \wedge \operatorname{dist}(\nabla u,SO(2))^p \text{ d}x + \delta\right) \\
 &\leq C \left( \int_{\Omega} \operatorname{dist}(\beta,SO(2))^2 \wedge \operatorname{dist}(\beta,SO(2))^p \text{ d}x+ \lm \tilde \beta_1 \rm_{L^2}^2 + \lm \tilde \beta_2 \rm_{L^p}^p + \delta \right) \\
 &\leq C \delta,
\end{align*}
which finishes the proof.
\end{proof}
\section{Proof of the $\Gamma$-Limit Result}\label{sec: mixed}

In this section, we prove Theorem \ref{theorem: critical} and Theorem \ref{prop: compactness}.
We start with the compactness statement. 

\subsection{Compactness}

The main ingredient in the proof will be the generalized rigidity estimate from Theorem \ref{theorem: generalizedrigidity}.

\begin{proof}[Proof of Theorem \ref{prop: compactness}]
We prove the result in three steps.\\
 \textbf{Step 1.} \emph{Weak convergence of the scaled dislocation measures.} \\
In this step our objective is to show that there exists a constant $C>0$ such that 
 \begin{equation*}
  \frac{| \mu_j| (\Omega)}{\varepsilon_j | \log \varepsilon_j|}  \leq C.
 \end{equation*}
 Then the existence of a weakly*-converging subsequence is immediate.
 Let us fix $\alpha \in (0,1)$. 
 By the finiteness of $E_{\varepsilon_j}(\mu_{j},\beta_{j})$, it follows $\mu_{j} \in X_{\varepsilon_j}$. We write $\mu_{j} = \sum_{i=1}^{M_j} \varepsilon_j \xi_{i,j} \, \delta_{x_{i,j}}$ for appropriate $\xi_{i,j} \in \mathbb{S}$ and $x_{i,j} \in \Omega$.
As $\rho_{\varepsilon_j} \gg \varepsilon^{\alpha}$ we find for $j$ large enough that
 \begin{align}
 C &\geq \frac1{\varepsilon_j^2 |\log \varepsilon_j|^2} \sum_{i = 1}^{M_j} \int_{B_{\varepsilon_j^{ \alpha}}(x_{i,j})\setminus B_{\varepsilon_j}(x_{i,j})} W(\beta_j) \,dx. \label{eq: boundannuli}
\end{align}
 Although $\beta_j$ is not a gradient on $B_{\varepsilon_j^{ \alpha}}(x_{i,j})\setminus B_{\varepsilon_j}(x_{i,j})$, using a covering by overlapping simply connected domains, we can still use the rigidity estimate from \cite[Proposition 2.3]{MuPa13} to find rotations $R_{i,j} \in SO(2)$ such that for all $1 \leq i \leq M_j$ and $j \in \mathbb{N}$ it holds
 \begin{align} \label{eq: mixedrigidity} 
  &\int_{B_{\varepsilon_j^{ \alpha}}(x_{i,j})\setminus B_{\varepsilon_j}(x_{i,j})} |\beta_j - R_{i,j}|^2 \wedge |\beta_j - R_{i,j}|^p \, dx \\ \leq C &\int_{B_{\varepsilon_j^{ \alpha}}(x_{i,j})\setminus B_{\varepsilon_j}(x_{i,j})} \operatorname{dist }(\beta_j,SO(2))^2 \wedge \operatorname{dist }(\beta_j,SO(2))^p \, dx. \nonumber
 \end{align}
Note that as the relative thickness of the annuli $B_{\varepsilon_j^{ \alpha}}(x_{i,j})\setminus B_{\varepsilon_j}(x_{i,j})$ is uniformly bounded from below, we can choose the constant $C$ in the estimate above uniformly in $i$ and $j$.
Furthermore, using Jensen's inequality, we have
\begin{align}
 &\int_{B_{\varepsilon_j^{ \alpha}}(x_{i,j})\setminus B_{\varepsilon_j}(x_{i,j})} |\beta_j - R_{i,j}|^2 \wedge |\beta_j - R_{i,j}|^p \, dx \label{eq: estimatemixed} \\
 \geq &\int_{\varepsilon_j}^{\varepsilon_j^{\alpha}} 2\pi t \, \frac{1}{2\pi t}\int_{\partial B_{t}(x_{i,j})} \frac{|(\beta_j - R_{i,j})\cdot \tau|^2}{2} \wedge \frac{|(\beta_j - R_{i,j})\cdot \tau|^p}{p} \, d\mathcal{H}^1\, dt \nonumber \\
 \geq &\int_{\varepsilon_j}^{\varepsilon_j^{\alpha}} 2\pi t \left( \frac12 \left| \frac{1}{2\pi t} \int_{\partial B_{t}(x_{i,j})} (\beta_j - R_{i,j})\cdot \tau d\mathcal{H}^1 \right|^2 \wedge \frac1p \left| \frac{1}{2\pi t} \int_{\partial B_{t}(x_{i,j})} (\beta_j - R_{i,j})\cdot \tau d\mathcal{H}^1 \right|^p\right) \,dt \nonumber \\
 \geq &\int_{\varepsilon_j}^{\varepsilon_j^{\alpha}} \pi t \left(\left| \frac{\varepsilon_j \, \xi_{i,j}}{2 \pi t}\right|^2 \wedge \left| \frac{\varepsilon_j \, \xi_{i,j}}{2 \pi t} \right|^p \right) \,dt. \nonumber
\end{align} \\
Here, $\tau$ denotes the tangent to $\partial B_t(x_{i,j})$.\\ \newline
\noindent \emph{Claim: Let $\alpha < \gamma < 1$. Then it holds $\varepsilon_j |\xi_{i,j}| \leq \varepsilon_j^{\gamma}$ for all $1 \leq i \leq M_j$ and $j \in \mathbb{N}$ large enough.} \\
Assume this is not the case i.e., there exists a subsequence (not relabeled) and indices $1 \leq i_j \leq M_j$ such that $\varepsilon_j |\xi_{i_j,j}| \geq \varepsilon_j^{\gamma}$.
Combining \eqref{eq: boundannuli}, \eqref{eq: mixedrigidity}, and \eqref{eq: estimatemixed}, this implies for $j$ large enough that
\begin{align*}
 C &\geq \frac{1}{\varepsilon_j^2 |\log \varepsilon_j|^2}  \int_{\varepsilon_j}^{\varepsilon_j^{\alpha}} \pi t \left(\left| \frac{\varepsilon_j \, \xi_{i_j,j}}{2 \pi t}\right|^2 \wedge \left| \frac{\varepsilon_j \, \xi_{i_j,j}}{2 \pi t} \right|^p\right) \,dt \\
 &\geq \frac{1}{\varepsilon_j^2 |\log \varepsilon_j|^2} \int_{\varepsilon_j}^{\frac{\varepsilon_j^{\gamma}}{2\pi}} \pi t \left| \frac{\varepsilon_j \, \xi_{i_j,j}}{2 \pi t} \right|^p \, dt \\
 &= \frac{1}{\varepsilon_j^2 |\log \varepsilon_j|^2} \varepsilon_j^p |\xi_{i_j,j}|^p 2^{-p} \pi^{1-p} (2-p)^{-1} \left( \frac{\varepsilon_j^{(2-p)\gamma}}{(2\pi)^{2-p}} - \varepsilon_j^{(2-p)} \right) \\
\end{align*}
As we assume that $\varepsilon_j |\xi_{i_j,j}| \geq \varepsilon_j^{\gamma}$, we derive from the estimate above
\begin{equation*}
C \geq  2^{-p} \pi^{1-p} (2-p)^{-1} \frac{1}{|\log \varepsilon_j|^2} \left( \frac{\varepsilon_j^{2(\gamma - 1)}}{(2\pi)^{2-p}} - \varepsilon_j^{p(\gamma - 1)} \right) \rightarrow \infty
\end{equation*} 
since $2(\gamma - 1) < p (\gamma - 1) < 0$.
Contradiction! \\ \newline
Fix $\alpha < \gamma < 1$. 
The claim above, \eqref{eq: boundannuli}, \eqref{eq: mixedrigidity}, and \eqref{eq: estimatemixed} imply that
\begin{align}
 C &\geq  \sum_{i=1}^{M_j} \frac{1}{\varepsilon_j^2 |\log \varepsilon_j|^2}  \int_{\varepsilon_j}^{\varepsilon_j^{\alpha}} \pi t \left(\left| \frac{\varepsilon_j \, \xi_{i,j}}{2 \pi t}\right|^2 \wedge \left| \frac{\varepsilon_j \, \xi_{i_j,j}}{2 \pi t} \right|^p\right) \,dt \nonumber \\
 &\geq  \frac{1}{\varepsilon_j^2 |\log \varepsilon_j|^2} \sum_{i=1}^{M_j} \int_{\varepsilon^{\gamma}_j}^{\varepsilon_j^{\alpha}} \pi t \left| \frac{\varepsilon_j \, \xi_{i,j}}{2 \pi t} \right|^2 \, dt \nonumber \\
 &= \frac{1}{4 \pi |\log \varepsilon_j|^2} \sum_{i=1}^{M_j} |\xi_{i,j}|^2 (\gamma - \alpha) |\log \varepsilon_j| \label{eq: estimatemuj}.
\end{align}
As the non-zero elements of $\mathbb{S}$ are bounded away from zero, it follows directly from \eqref{eq: estimatemuj} that
\begin{equation*}
 C \geq \frac{1}{|\log \varepsilon_j|} \sum_{i=1}^{M_j} |\xi_{i,j}| = \frac{|\mu_j|(\Omega)}{\varepsilon_j |\log \varepsilon_j|}.
\end{equation*}
 
\noindent \textbf{Step 2.} \emph{Weak convergence of the scaled strains.} \\
Our assumptions imply directly that $\beta_j \in \mathcal{A}\mathcal{S}_{\varepsilon_j}(\mu_{j})$, in particular $\operatorname{curl }\beta_j = \mu_j$.
The generalized rigidity estimate, Theorem \ref{theorem: generalizedrigidity}, yields the existence of rotations $R_j \in SO(2)$ such that 
\begin{equation*}
 \int_{\Omega} |\beta_j - R_j|^2 \wedge |\beta_j - R_j|^p \,dx \leq C \left( \int_{\Omega} \operatorname{dist}(\beta_j,R)^2 \wedge \operatorname{dist}(\beta_j,R)^p \,dx + |\mu_j|(\Omega)^2 \right).
\end{equation*}
From the lower bound on $W$ (see \ref{item: property4}.~ in Section \ref{sec: settingmixed}) and step 1 it follows
\begin{equation}\label{eq: compactnessrigidity}
\int_{\Omega} |\beta_j - R_j|^2 \wedge |\beta_j - R_j|^p \,dx \leq C \varepsilon_j^2 |\log \varepsilon_j|^2.
\end{equation}
Set $G_j = \frac{R_j^T \beta_j - Id}{\varepsilon_j |\log \varepsilon_j|}$. 
Then
\begin{equation}
 \int_{\Omega} |G_j|^2 \wedge \frac{|G_j|^p}{\varepsilon_j^{2-p} |\log \varepsilon_j|^{2-p}} \,dx \leq C. \label{eq: boundGj}
\end{equation}
This implies that $(G_j)_j$ is a bounded sequence in $L^p(\Omega;\R^{2\times2})$. 
Hence, there exists a subsequence (again denoted by $G_j$) which converges weakly in $L^p(\Omega;\R^{2\times2})$ to some function $\beta \in L^p(\Omega;\R^{2\times2})$. \\
Next, we show that $\beta \in L^2(\Omega;\R^{2\times2})$. \\
Consider the decomposition of $\Omega$ into the two sets
\begin{align*}
 &A_j^2 = \left\{x \in \Omega: |G_j(x)|^2 \leq \frac{|G_j(x)|^p}{\varepsilon_j^{2-p} |\log \varepsilon_j|^{2-p}} \right\} \\  \text{ and }
 &A_j^p = \left\{x \in \Omega: |G_j(x)|^2 > \frac{|G_j(x)|^p}{\varepsilon_j^{2-p} |\log \varepsilon_j|^{2-p}} \right\}.
\end{align*}
By \eqref{eq: boundGj}, the sequence $|G_j| \1_{A_j^2}$ is bounded in $L^2(\Omega;\R^{2\times2})$.
Consequently, up to taking a further subsequence, the sequence converges weakly in $L^2(\Omega;\R^{2\times2})$ to a function $\tilde \beta \in L^2(\Omega;\R^{2\times 2})$.
It suffices to show that $\beta = \tilde \beta$. \\
By the definition of $G_j$, one verifies that
\begin{equation*}
 A_j^p = \left\{ x \in \Omega: |\beta_j - R_j|^2 > |\beta_j - R_j|^p \right\} = \{x \in \Omega: |\beta_j - R_j| > 1\}.
\end{equation*}
Then \eqref{eq: compactnessrigidity} implies
\begin{equation*}
 \left| A_j^p \right| \leq \int_{A_j^p} |\beta_j - R_j|^p \,dx \leq C \varepsilon_j^2 |\log \varepsilon_j|^2 \rightarrow 0.
\end{equation*}
Thus, $\1_{A_j^2} \rightarrow 1$ boundedly in measure which implies directly that also 
\begin{equation*}
 G_j \1_{A_j^2} \rightharpoonup \beta \text{ in } L^p(\Omega;\R^{2\times2}).
\end{equation*}
Hence, $\beta = \tilde \beta \in L^2(\Omega;\R^{2\times2})$. \\

\noindent \textbf{Step 3.} \emph{$\mu \in H^{-1}(\Omega;\R^2)$ and $\operatorname{curl} \beta = R^T \mu$.} \\
As $\beta \in L^2(\Omega;\R^{2\times 2})$, it is clear that $\operatorname{curl} \beta \in H^{-1}(\Omega;\R^2)$. 
Moreover, one computes for $\varphi \in C^{\infty}_c(\Omega;\R^2)$ and $J= \begin{pmatrix} 0 &-1 \\1 &0 \end{pmatrix}$ that
\begin{align*}
 &< \mu, \varphi >_{\mathcal{D}',\mathcal{D}} = \lim_j \frac{1}{\varepsilon_j |\log \varepsilon_j|} <\mu_j, \varphi >_{\mathcal{D}',\mathcal{D}} \\
 = &\lim_j \frac{1}{\varepsilon_j |\log \varepsilon_j|} <\operatorname{curl} (\beta_j - R_j), \varphi >_{\mathcal{D}',\mathcal{D}}
 = - \lim_j \frac{1}{\varepsilon_j |\log \varepsilon_j|} <\beta_j - R_j, J \nabla \varphi >_{\mathcal{D}',\mathcal{D}} 
\\ = &- < R \beta, J \nabla \varphi>_{\mathcal{D}',\mathcal{D}} = < \operatorname{curl} (R \beta), \varphi >_{\mathcal{D}',\mathcal{D}}.
\end{align*}
As $\operatorname{curl} (R \beta) = R \operatorname{curl} \beta$, it follows that $R^T \mu = \operatorname{curl} \beta$. 
\end{proof}

\subsection{The Self-Energy}\label{section: self-energy}
In this subsection we define the self-energy per dislocation, which appears in the limit, and recall briefly its most important properties.
For proofs of the statements in this section, we refer to \cite{GaLePo10}. \\
\noindent Let $0 < r_1 < r_2$ and $\xi \in \R^2$. 
Define
\begin{align*}
\mathcal{A}\mathcal{S}_{r_1,r_2}(\xi) = \left\{ \eta \in L^2\left( B_{r_2}(0) \setminus B_{r_1}(0);\R^{2\times 2} \right): \operatorname{curl }\eta = 0  \text{ and } \int_{\del B_{r_1}(0)} \eta \cdot t = \xi \right\}.
\end{align*}
Here, $\tau$ denotes the unit tangent to $\partial B_{r_1}(0)$.
The circulation condition has to be understood in the sense of traces.
For a function $\eta \in L^2\left( B_{r_2}(0) \setminus B_{r_1}(0);\R^{2\times 2}\right)$ which is $\operatorname{curl }$-free the tangential boundary values are well-defined in $H^{-\frac12}\left( B_{r_2}(0) \setminus B_{r_1}(0);\R^{2}\right)$ (see \cite[Theorem 2]{DaLi88}). 
The integral is then understood as testing with the constant $1$-function. \\
Note that this definition of admissible strains $\mathcal{A}\mathcal{S}_{r_1,r_2}(\xi)$ is defined by a circulation condition and not by a $\operatorname{curl}$-condition as in the definition of $\mathcal{A}\mathcal{S}_{\varepsilon}$ in Section \ref{sec: settingmixed}.
Clearly, the two formulations are linked via Stoke's theorem. \\
Next, set
\begin{equation}\label{def:psirR}
\psi_{r_1,r_2}(\xi) = \min \left\{ \frac12 \int_{B_{r_2}(0) \setminus B_{r_1}(0)} \mathcal{C}\eta:\eta \, dx: \, \eta \in \mathcal{A}\mathcal{S}_{r_1,r_2}(\xi) \right\},
\end{equation} 
where $\mathcal{C} = \frac{\del^2 W}{\del^2 F}(Id)$. 
Note that by scaling it holds that $\psi_{r_1,r_2}(\xi) = \psi_{\frac{r_1}{r_2},1}(\xi)$.
The special case $r_2=1$ will be denoted by
\begin{equation}\label{definition: psi}
 \psi(\xi,\delta) = \min \left\{ \frac12 \int_{B_1(0) \setminus B_{\delta}(0)} \mathcal{C}\eta:\eta \, dx: \, \eta \in \mathcal{A}\mathcal{S}_{1,\delta}(\xi) \right\}.
\end{equation}

\begin{proposition}[Corollary 6 and Remark 7 in \cite{GaLePo10}]\label{prop: convpsi}
 Let $\xi \in \R^2$, $\delta \in (0,1)$ and let $\psi(\xi,\delta)$ be defined as in \eqref{definition: psi}. 
Then for every $\xi \in \R^2$ it holds
 \begin{equation*}
  \lim_{\delta \to 0} \frac{\psi(\xi,\delta)}{|\log \delta|} = \psi(\xi),
 \end{equation*}
where $\psi: \R^2 \rightarrow [0,\infty)$ is defined by
\begin{equation}\label{def: selfenergy}
 \psi(\xi) = \lim_{\delta \to 0} \frac1{|\log \delta|} \frac12 \int_{B_1(0)\setminus B_{\delta}(0)} \mathcal{C} \eta_0: \eta_0 \, dx
\end{equation}
and $\eta_0: \R^2 \rightarrow \R^{2 \times 2}$ is a fixed distributional solution to
\begin{equation*}
 \begin{cases}
  \operatorname{curl }\eta_0 = \xi \delta_0 &\text{ in } \R^2, \\
  \operatorname{div }\eta_0 = 0 &\text{ in } \R^2. 
 \end{cases}
\end{equation*} 
In particular, both limits exist.
Moreover, there exists a constant $K>0$ such that for all $\delta >0$ small enough and $\xi \in \R^2$ it holds
\begin{equation*}
\left| \frac{\psi(\xi,\delta)}{|\log \delta|} - \psi(\xi)\right| \leq K \frac{|\xi|^2}{|\log \delta|}.
\end{equation*}
\end{proposition}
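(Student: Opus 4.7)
The strategy is a two-sided comparison against the explicit reference field $\eta_0$ combined with a polar-coordinates calculation.

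First I would write $\eta_0$ explicitly. Each row of $\eta_0$ is a planar vector field $v_i$ with $\operatorname{curl} v_i = \xi_i\delta_0$ and $\operatorname{div} v_i = 0$. Using the identity $\operatorname{curl}(\nabla^\perp\phi) = \Delta\phi$ and $\operatorname{div}(\nabla^\perp\phi) = 0$ together with the fundamental solution $G(x) = \frac{1}{2\pi}\log|x|$, one obtains $v_i = \xi_i\nabla^\perp G$. Consequently $\eta_0$ is homogeneous of degree $-1$, i.e.,
\begin{equation*}
\eta_0(x) = \frac{1}{|x|}\Phi_\xi\!\left(\frac{x}{|x|}\right),
\end{equation*}
for a smooth matrix field $\Phi_\xi: S^1 \to \R^{2\times 2}$ which is quadratic in $\xi$. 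A direct polar-coordinates computation then gives
\begin{equation*}
\frac{1}{2}\int_{B_1\setminus B_\delta}\mathcal{C}\eta_0:\eta_0\, dx = \frac{1}{2}\int_\delta^1\int_0^{2\pi}\frac{\mathcal{C}\Phi_\xi(\theta):\Phi_\xi(\theta)}{r}\, d\theta\, dr = \Psi(\xi)|\log\delta|,
\end{equation*}
with $\Psi(\xi) := \frac{1}{2}\int_0^{2\pi}\mathcal{C}\Phi_\xi(\theta):\Phi_\xi(\theta)\, d\theta \geq 0$. This simultaneously establishes the existence of the limit defining $\psi(\xi)$ in \eqref{def: selfenergy} and its explicit value $\Psi(\xi)$, and shows $\psi(\xi)$ is a nonnegative quadratic form in $\xi$.

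For the upper bound, the restriction of $\eta_0$ to $B_1\setminus B_\delta$ is admissible for \eqref{definition: psi}: it is curl-free on $B_1\setminus B_\delta$ (because $0 \notin B_1\setminus B_\delta$) and its circulation on $\partial B_\delta$ equals $\xi$ by Stokes applied to $\operatorname{curl}\eta_0 = \xi\delta_0$. Hence $\psi(\xi,\delta) \leq \Psi(\xi)|\log\delta|$. For the lower bound, let $\eta_\delta$ minimize \eqref{definition: psi}. Since $\eta_\delta - \eta_0$ is curl-free on $B_1\setminus B_\delta$ with vanishing circulation on $\partial B_\delta$, we may write $\eta_\delta = \eta_0 + \nabla u_\delta$ with $u_\delta \in H^1(B_1\setminus B_\delta)$ of zero mean. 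Expanding,
\begin{equation*}
\psi(\xi,\delta) = \Psi(\xi)|\log\delta| + \int \mathcal{C}\eta_0:\nabla u_\delta\, dx + \frac{1}{2}\int \mathcal{C}\nabla u_\delta:\nabla u_\delta\, dx,
\end{equation*}
and the task reduces to showing that the sum of the last two terms is bounded below by $-K|\xi|^2$ uniformly in $\delta$. Integrating the cross term by parts produces a bulk contribution involving $\operatorname{div}(\mathcal{C}\eta_0)$ (which is homogeneous of degree $-2$ with angularly mean-free profile, so it pairs only with the zero-mean part of $u_\delta$) together with boundary integrals on $\partial B_1$ and $\partial B_\delta$. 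These are controlled by the scale-invariant trace and Poincar\'e inequalities on the annulus (with constants depending only on the aspect ratio, not on $\delta$) and absorbed into the positive quadratic term via Young's inequality. Combining these bounds with the upper bound gives $|\psi(\xi,\delta) - \Psi(\xi)|\log\delta|| \leq K|\xi|^2$, which after division by $|\log\delta|$ yields both the claimed limit and the quantitative rate.

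The main obstacle is the lower bound: $\eta_0$ belongs only to the weak space $L^{2,\infty}$ near the origin, and the corrector $u_\delta$ may a priori satisfy only $\|\nabla u_\delta\|_{L^2} \lesssim |\log\delta|^{1/2}$, so a naive Cauchy--Schwarz estimate on the cross term would grow like $|\log\delta|$. The crucial point is to exploit the degree-$-1$ homogeneity of $\eta_0$: integration by parts reduces the cross term to boundary integrals (plus an angularly mean-free bulk term), and the corresponding scale-invariant trace bounds on dyadic sub-annuli of $B_1\setminus B_\delta$ allow the lower-order contributions to be absorbed into the $L^2$-coercive quadratic term, producing the required bound independent of $\delta$.
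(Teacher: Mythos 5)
First, a point of reference: the paper does not prove this proposition — it is quoted from \cite{GaLePo10} (Corollary 6 and Remark 7), so there is no internal proof to compare against, and your proposal is effectively a reconstruction of that reference's argument. Your architecture (explicit degree $-1$ homogeneous comparison field, polar coordinates for the $|\log\delta|$ factor, expansion of the energy around it, integration by parts in the cross term) is the right one, but two steps contain genuine gaps.

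The first concerns the bulk term. Taking the statement's normalization $\operatorname{div}\eta_0=0$ at face value gives $\eta_0=\xi\otimes\nabla^{\perp}G$, for which $\operatorname{div}(\mathcal{C}\eta_0)\neq 0$ in general. Your claim that the resulting bulk term $\int \operatorname{div}(\mathcal{C}\eta_0)\cdot u_\delta\,dx$ is harmless because the angular profile is mean-free is quantitatively wrong: the angular oscillation of $u_\delta$ on $\partial B_r$ is of size $O(r\,|\nabla u_\delta|)=O(|\xi|)$ and does not decay in $r$, so the bulk term is of order $\int_\delta^1 |\xi| r^{-1}\cdot O(|\xi|)\,dr=O(|\xi|^2|\log\delta|)$ — a leading-order contribution, not an error (Poincar\'e--Wirtinger on circles plus Cauchy--Schwarz in $r$ only yields $C|\xi|\,|\log\delta|^{1/2}\|\nabla u_\delta\|_{L^2}$, and $\|\nabla u_\delta\|_{L^2}\sim|\xi|\,|\log\delta|^{1/2}$ since the minimizer's angular profile is the \emph{equilibrium} one, which generically differs from that of $\xi\otimes\nabla^{\perp}G$). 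Consequently your lower bound fails, and indeed $\lim\psi(\xi,\delta)/|\log\delta|$ is in general strictly smaller than your $\Psi(\xi)$. The cure — and what \cite{GaLePo10} and \cite{BaBaSc80} do, consistent with the field $\eta^j_i=|x|^{-1}\Gamma_{\xi}(x/|x|)$ used later in the limsup construction — is to take $\eta_0$ satisfying $\operatorname{div}(\mathcal{C}\eta_0)=0$ (the condition $\operatorname{div}\eta_0=0$ in the statement should be read as a typo for this), so the bulk term vanishes identically and only $O(|\xi|^2)$ boundary terms remain.

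The second gap is the absorption step. You propose to absorb the error terms into the ``$L^2$-coercive quadratic term'' $\tfrac12\int\mathcal{C}\nabla u_\delta:\nabla u_\delta\,dx$ by Young's inequality, but $\mathcal{C}=\frac{\partial^2 W}{\partial F^2}(Id)$ annihilates skew-symmetric matrices by frame indifference, so this term controls only $\|e(u_\delta)\|_{L^2}^2$, not $\|\nabla u_\delta\|_{L^2}^2$; nothing involving the full gradient can be absorbed into it. To close the argument one must (i) verify that the boundary functionals $u\mapsto\int_{\partial B_r}(\mathcal{C}\eta_0 n)\cdot u\,d\mathcal{H}^1$ annihilate infinitesimal rigid motions (zero net force and moment of the dislocation traction), and (ii) apply Korn's and trace inequalities on the fixed-aspect-ratio annuli $B_1\setminus B_{1/2}$ and $B_{2\delta}\setminus B_\delta$, whose constants are scale-invariant, to bound the traces of $u_\delta$ minus a rigid motion by $\|e(u_\delta)\|_{L^2}$. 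With these two repairs (and noting that $\Phi_\xi$ is linear, not quadratic, in $\xi$) your scheme does deliver $|\psi(\xi,\delta)-\psi(\xi)|\log\delta||\leq K|\xi|^2$.
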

\begin{remark}
Note that the functions $\psi(\cdot,\delta)$ and $\psi$ are $2$-homogeneous and convex.
\end{remark}
\begin{remark} \label{remark: psiext}
In \cite[Proposition 8]{GaLePo10}, the authors show the following extension of the result above. 
Let $0 < r_{\delta} \to 0$ such that $\frac{ \log (r_{\delta})}{\log  (\delta)} \to 0$. 
Define for $\xi \in \R^2$ the function $\tilde{\psi}(\cdot,\delta)$ by
\begin{equation*}
\tilde{\psi}(\xi,\delta) = \min \left\{ \int_{B_{r_{\delta}}(0) \setminus B_{\delta}(0)} \frac12 \mathcal{C} \eta:\eta \,dx: \eta \in \mathcal{A}\mathcal{S}_{r_{\delta},\delta}(\xi) \right\}.
\end{equation*}
Then $\frac{\tilde{\psi}(\xi,\delta)}{|\log \delta|} = \frac{ \psi(\xi,\delta)}{|\log \delta|} (1 + o(1))$ where $o(1) \to 0$ as $\delta \to 0$. 
\end{remark}

The function $\psi$ is the (renormalized) limit self-energy of a single dislocation with Burgers vector $\xi$.
 The well-separateness condition on the dislocations does not prevent dislocations from merging to a single dislocation in the limit.
This could lead to a smaller limit energy per dislocation than $\psi$. 
The right way to capture this energetic behavior is to define the limit self-energy density $\varphi$ through a relaxation procedure.

\begin{definition}
We define the function $\varphi: SO(2) \times \R^2 \rightarrow [0,\infty)$ by
\begin{equation}\label{definition: phi}
 \varphi(R,\xi) = \min \left\{ \sum_{k=1}^{M} \lambda_k \psi(R^T \xi_k): \sum_{k=1}^M \lambda_k \xi_k, \, M \in \mathbb{N}, \, \lambda_k \geq 0, \, \xi_k \in \mathbb{S} \right\}.
\end{equation}
\end{definition}
\begin{remark}
Indeed, it can be seen by the $2$-homogeneity of $\psi$ that the $\min$ in the definition of $\varphi$ exists.
\end{remark}
\begin{remark}
 The function $\varphi(R,-)$ is convex and $1$-homogenous.
\end{remark}


\subsection{The $\Gamma$-Convergence Result}
Finally, we prove the $\Gamma$-convergence result for the energy $E_{\varepsilon}$ as defined in \eqref{definitition: energy}, Theorem \ref{theorem: critical}.
 The proof will be subdivided in Propositions \ref{prop: liminf} and \ref{prop: limsup}.

\begin{proposition}[The $\Gamma$-$\liminf$-inequality]\label{prop: liminf}
Let $\varepsilon_j \to 0$. 
Let $(\mu_{j},\beta_{j})$ be a sequence in the space $\mathcal{M}(\Omega;\R^2) \times L^p(\Omega;\R^{2\times2})$ that converges to a triplet $(\mu,\beta,R) \in  \mathcal{M}(\Omega;\R^2) \times L^p(\Omega;\R^{2\times2}) \times SO(2)$ in the sense of Definition \ref{def: convcrit}. 
Then
 \begin{equation*}
  \liminf_{j \to \infty} E_{\varepsilon_j}(\mu_{j},\beta_{j}) \geq E^{crit}(\mu, \beta,R).
 \end{equation*}
\end{proposition}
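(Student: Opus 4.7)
The plan is to follow the strategy of \cite{GaLePo10,MuScZe14} adapted to the mixed-growth setting: split the energy into an elastic bulk contribution (yielding the $\tfrac12\int\mathcal{C}\beta:\beta$ term) and a core self-energy contribution in thin annuli around each dislocation (yielding the $\psi$-term, which is then relaxed to $\varphi$). Pass to a subsequence along which the liminf is attained and finite. By Theorem \ref{prop: compactness} and its proof we have $R_j\to R$ and $G_j:=(R_j^T\beta_j-Id)/(\varepsilon_j|\log\varepsilon_j|)$ with $G_j\chi_{A_j^2}\rightharpoonup\beta$ weakly in $L^2(\Omega;\R^{2\times 2})$, where $A_j^2=\{|\beta_j-R_j|\leq 1\}$ has $|\Omega\setminus A_j^2|\to 0$. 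Writing $\mu_j=\sum_{i=1}^{M_j}\varepsilon_j\xi_{i,j}\delta_{x_{i,j}}$, Step 1 of the compactness proof also yields $\sum_i|\xi_{i,j}|^2\leq C|\log\varepsilon_j|$ and $\varepsilon_j|\xi_{i,j}|\leq\varepsilon_j^\gamma$ for some $\gamma\in(0,1)$. Choose $r_j:=\rho_{\varepsilon_j}$ so that the balls $B_{r_j}(x_{i,j})$ are pairwise disjoint; by the hypotheses on $\rho_\varepsilon$, $|\log(\varepsilon_j/r_j)|/|\log\varepsilon_j|\to 1$ and $|\Omega_j^{\mathrm{bulk}}|\to|\Omega|$, where $\Omega_j^{\mathrm{bulk}}:=\Omega\setminus\bigcup_i B_{r_j}(x_{i,j})$.

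For the bulk, frame indifference together with a second-order Taylor expansion of $W$ at $Id$ (valid on $A_j^2$, using $W(Id)=0$, $DW(Id)=0$, $D^2W(Id)=\mathcal{C}$) gives the pointwise bound
\begin{equation*}
\frac{W(\beta_j)}{\varepsilon_j^2|\log\varepsilon_j|^2}\chi_{A_j^2\cap\Omega_j^{\mathrm{bulk}}}\;\geq\;\tfrac{1}{2}\mathcal{C}G_j:G_j\,\chi_{A_j^2\cap\Omega_j^{\mathrm{bulk}}}(1-o(1)),
\end{equation*}
and since $G_j\chi_{A_j^2}\chi_{\Omega_j^{\mathrm{bulk}}}\rightharpoonup\beta$ in $L^2$, weak lower semicontinuity of the convex quadratic functional yields $\liminf_j(\varepsilon_j^2|\log\varepsilon_j|^2)^{-1}\int_{\Omega_j^{\mathrm{bulk}}}W(\beta_j)\geq\tfrac12\int_\Omega\mathcal{C}\beta:\beta$. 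For each annulus $A_{i,j}:=B_{r_j}(x_{i,j})\setminus B_{\varepsilon_j}(x_{i,j})$, the classical mixed-growth rigidity of \cite{MuPa13} applied to a covering by overlapping simply-connected subdomains (as in Step 1 of the compactness proof) gives rotations $R_{i,j}$ close to $R_j$ with $R_{i,j}^T\beta_j-Id$ small in the mixed sense. Linearizing $W$ around $R_{i,j}$ and using that $\operatorname{curl}(R_{i,j}^T\beta_j-Id)=0$ on $A_{i,j}$ with boundary circulation $R_{i,j}^T\varepsilon_j\xi_{i,j}$ on $\partial B_{\varepsilon_j}(x_{i,j})$ (Stokes applied to $\operatorname{curl}\beta_j=\mu_j$), the definition of $\psi(\cdot,\cdot)$ in Subsection \ref{section: self-energy} together with its $2$-homogeneity yields
\begin{equation*}
\int_{A_{i,j}} W(\beta_j)\,dx\;\geq\;(1-o(1))\,\varepsilon_j^2\,\psi\!\left(R_{i,j}^T\xi_{i,j},\,\tfrac{\varepsilon_j}{r_j}\right).
\end{equation*}
By Proposition \ref{prop: convpsi}, $\psi(\xi,\delta)\geq|\log\delta|\psi(\xi)-K|\xi|^2$; using $|\log(\varepsilon_j/r_j)|=(1+o(1))|\log\varepsilon_j|$, $R_{i,j}\to R$ uniformly, continuity of $\psi$, and the bound $\sum_i|\xi_{i,j}|^2\leq C|\log\varepsilon_j|$ to absorb the $K|\xi|^2$ errors, one obtains
\begin{equation*}
\frac{1}{\varepsilon_j^2|\log\varepsilon_j|^2}\sum_i\int_{A_{i,j}}W(\beta_j)\;\geq\;\frac{1+o(1)}{|\log\varepsilon_j|}\sum_i\psi(R^T\xi_{i,j})-o(1).
\end{equation*}

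To pass from this sum to the integral of $\varphi$, observe that (\ref{definition: phi}) directly implies
\begin{equation*}
\sum_k\lambda_k\psi(R^T\xi_k)\;\geq\;\varphi\!\left(R,\sum_k\lambda_k\xi_k\right)
\end{equation*}
for any finite $\{(\lambda_k,\xi_k)\}\subset\R_{\geq 0}\times\mathbb{S}$. Cover $\operatorname{supp}\mu$ by finitely many pairwise disjoint small balls $B_\rho(y_l)$ with $|\mu|(\partial B_\rho(y_l))=0$, and apply this inequality on each ball with $\lambda_k=|\log\varepsilon_j|^{-1}$, $\xi_k=\xi_{i,j}$ for $x_{i,j}\in B_\rho(y_l)$. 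Since $\nu_j:=\sum_i\xi_{i,j}|\log\varepsilon_j|^{-1}\delta_{x_{i,j}}\weakstar\mu$, continuity and $1$-homogeneity of $\varphi(R,\cdot)$ yield $\liminf_j|\log\varepsilon_j|^{-1}\sum_i\psi(R^T\xi_{i,j})\geq\sum_l\varphi(R,\mu(B_\rho(y_l)))$, and refining the cover as $\rho\to 0$ recovers $\int_\Omega\varphi(R,d\mu/d|\mu|)\,d|\mu|$. Adding the bulk and core bounds, which live on disjoint sets, gives the claim. The main obstacle is the control of the mixed-growth tail of $\beta_j$ on the core annuli: the Taylor expansion of $W$ around $R_{i,j}$ is naively valid only where $|\beta_j-R_{i,j}|\leq 1$, and showing that the remaining piece of each $A_{i,j}$ contributes at most $o(1)$ to the normalized energy while simultaneously ensuring that the rotations $R_{i,j}$ stay uniformly close to $R_j$ in $i$ requires carefully combining the generalized rigidity estimate (Theorem \ref{theorem: generalizedrigidity}) with the separation hypothesis on $\rho_{\varepsilon_j}$.
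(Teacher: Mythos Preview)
Your overall architecture (split into bulk and core, Taylor-expand for the bulk, use $\psi$ and then relax to $\varphi$ for the core) matches the paper, and the bulk part is essentially right, up to one missing cutoff: on $A_j^2$ you only know $\varepsilon_j|\log\varepsilon_j||G_j|\le 1$, not that it tends to $0$, so the Taylor remainder does not vanish pointwise. The paper introduces an additional set $B_{\varepsilon_j}=\{|G_j|\le \varepsilon_j^{-1/2}\}$ and works on $A_j^2\cap B_{\varepsilon_j}\cap\Omega_{\rho_{\varepsilon_j}}(\mu_j)$; then $\varepsilon_j|\log\varepsilon_j||G_j|\le \varepsilon_j^{1/2}|\log\varepsilon_j|\to 0$ and the error term is genuinely $o(1)$ times an $L^1$-bounded quantity.

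The real gap is in the core estimate. The claimed inequality
\[
\int_{A_{i,j}} W(\beta_j)\,dx \;\geq\; (1-o(1))\,\varepsilon_j^2\,\psi\!\left(R_{i,j}^T\xi_{i,j},\,\tfrac{\varepsilon_j}{r_j}\right)
\]
cannot be obtained by ``linearizing $W$ around $R_{i,j}$'' on $A_{i,j}=B_{r_j}(x_{i,j})\setminus B_{\varepsilon_j}(x_{i,j})$. Near the inner radius $\varepsilon_j$ the competitor field satisfies $|\beta_j-R_{i,j}|\sim 1$, so the quadratic Taylor expansion of $W$ is not valid there; worse, the growth assumption gives $W(F)\sim \operatorname{dist}(F,SO(2))^p$ in that regime, which is strictly \emph{smaller} than the quadratic form entering the definition of $\psi$. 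Since the region $B_{\varepsilon_j^\alpha}\setminus B_{\varepsilon_j}$ carries a fraction $(1-\alpha)$ of the total logarithmic self-energy, this defect is not negligible and there is no $o(1)$ uniform in $i,j$. Your final paragraph identifies the difficulty but does not resolve it; neither the generalized rigidity of Theorem \ref{theorem: generalizedrigidity} nor the separation hypothesis on $\rho_{\varepsilon}$ helps, because the obstruction is the $p$-growth of $W$ at large distortion, not a compactness issue. A related problem is that $R_{i,j}^T\beta_j-Id$ need not belong to $L^2(A_{i,j})$ at all, so even the comparison with the $L^2$-variational definition of $\psi$ is formal. Finally, the uniform convergence $R_{i,j}\to R$ is asserted without proof.

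The paper avoids this by never going down to radius $\varepsilon_j$. It fixes $\alpha,\delta\in(0,1)$, discards the innermost annulus $B_{\delta\varepsilon_j^\alpha}\setminus B_{\varepsilon_j}$, and subdivides $B_{\rho_{\varepsilon_j}}(x_{i,j})\setminus B_{\delta\varepsilon_j^\alpha}(x_{i,j})$ into $\tilde k_j\sim \alpha|\log\varepsilon_j|/|\log\delta|$ annuli $C_j^{k,i}$ of \emph{fixed} ratio $\delta^{-1}$. On each $C_j^{k,i}$ the inner radius is $\ge \delta\varepsilon_j^\alpha\gg \varepsilon_j$, so after rescaling the relevant strain is $O(\varepsilon_j^{1-\alpha})\to 0$; a contradiction/compactness argument (as in \cite[Proposition 3.11]{ScZe12}) then gives directly
\[
\int_{C_j^{k,i}} \frac{W(\beta_j)}{\varepsilon_j^2}\,dx \;\geq\; \psi(R^T\xi_{i,j},\delta)-\sigma_j|\xi_{i,j}|^2
\]
with the \emph{global} rotation $R$ and $\sigma_j\to 0$ uniform in $i,k$. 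Summing over $k$ produces the factor $\tilde k_j\,\psi(R^T\xi_{i,j},\delta)/|\log\varepsilon_j|^2$, and Proposition \ref{prop: convpsi} together with $\sum_i|\xi_{i,j}|^2\le C|\log\varepsilon_j|$ handles the errors. One then sends $\alpha\to 1$ and $\delta\to 0$. This dyadic ``layer-cake'' decomposition is the missing ingredient in your sketch; without it (or an equivalent device) the self-energy lower bound cannot be closed. For the passage from $\psi$ to $\int\varphi\,d|\mu|$ the paper simply uses $\psi\ge\varphi(R,\cdot)$, $1$-homogeneity, and Reshetnyak's lower semicontinuity theorem, which is cleaner than your covering argument but equivalent in spirit.
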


\begin{proof} 
 We may assume that $\liminf_{j \to \infty} E_{\varepsilon_j}(\mu_j,\beta_j) = \lim_{j \to \infty} E_{\varepsilon_j}(\mu_j,\beta_j)$. 
 Moreover, we may assume that $\sup_{j} E_{\varepsilon_j}(\mu_j,\beta_j) < \infty$.
 This implies that $\mu_j \in X_{\varepsilon_j}(\Omega)$ and $\beta_j \in \mathcal{A}\mathcal{S}_{\varepsilon_j}(\mu_j)$ for all $j$.
 In particular, the dislocation density $\mu_j$ is of the form $\mu_j = \sum_{i = 1}^{M_j} \varepsilon_j \xi_{i,j} \, \delta_{x_{i,j}}$ for some $0 \neq \xi_{i,j} \in \mathbb{S}$ and $x_{i,j} \in \Omega$.
A straightforward computation shows that the rotations provided by the application of the generalized rigidity estimate in the proof of the compactness result also converge to $R$. 
We will assume that the $R_j$ are those from the compactness result.
Then it follows that $\beta \in L^2(\Omega;\R^{2 \times 2})$, $\mu \in H^{-1}(\Omega;\R^2) \cap \mathcal{M}(\Omega;\R^2)$, and $\operatorname{curl }\beta = R^T \mu$. \\
 
 \noindent In order to prove the lower bound, we subdivide the energy $E_{\varepsilon_j}(\mu_{j},\beta_{j})$ into a part far from the dislocations and a contribution close to the dislocations (see also Figure \ref{fig: liminf}), precisely
 \begin{equation*}
  E_{\varepsilon_j}(\mu_j,\beta_j) = \frac1{\varepsilon_j^2 |\log \varepsilon_j|^2} \int_{\Omega_{\rho_{\varepsilon_j}}(\mu_j)} W(\beta_j) \, dx
  + \frac1{\varepsilon_j^2 |\log \varepsilon_j|^2} \sum_{i=1}^{M_j} \int_{B_{\rho_{\varepsilon_j}}(x_{i,j})} W(\beta_j) \, dx   ,
 \end{equation*}
 where we define for $r > 0$ the set $\Omega_r(\mu_j) = \Omega \setminus \bigcup_{i=1}^{M_j} B_r(x_{i,j})$.
The two contributions will be treated separately.\\
The first term on the right hand side will asymptotically include the linearized elastic interaction energy of the dislocations, the second term on the right hand side includes the self-energies of the dislocations. \\ \newline
 \noindent \textbf{\emph{Lower bound far from the dislocations. }}We will perform a second order Taylor expansion at scale $\varepsilon_j |\log \varepsilon_j|$ of the function $W$.
As the energy density has a minimum at the identity matrix, there exists a function $\sigma: \R^{2 \times 2} \rightarrow \R$ such that for all $F \in \R^{2 \times 2}$ it holds
 \begin{equation*}
  W(Id + F) = \frac12 \mathcal{C} F:F + \sigma(F),
 \end{equation*}
where $\sigma(F)/|F|^2 \to 0$ as $|F| \to 0$.
 Set $\omega(t) = \sup_{|F| \leq t} |\sigma(F)|$.
Note that $\omega(t)/t^2 \to 0$ as $t \to 0$.
 Then for all $F \in \R^{2 \times 2}$ 
  \begin{equation}\label{eq: taylor}
  W(Id + \varepsilon_j |\log \varepsilon_j| F) \geq \frac12 \varepsilon_j^2 |\log \varepsilon_j|^2 \mathcal{C} F:F - \omega(\varepsilon_j |\log \varepsilon_j| |F|).
 \end{equation}
Next, define 
\begin{equation*}
 G_j = \frac{R_{j}^T \beta_j - Id}{\varepsilon_j |\log \varepsilon_j|},
\end{equation*}
and
\begin{equation*}
 A_{\varepsilon_j}^2 = \left\{x \in \Omega: |G_j(x)|^2 \leq \frac{|G_j|^p}{\varepsilon_j^{2-p} |\log \varepsilon_j|^{2-p}} \right\}.
\end{equation*}
As in the proof of the compactness (Theorem \ref{prop: compactness}), it can be shown that $G_j \1_{A_{\varepsilon_j}^2} \rightharpoonup \beta$  in $L^2(\Omega;\R^{2\times 2})$ and 
$\1_{A_{\varepsilon_j}^2} \to 1$ boundedly in measure. 
Furthermore, define the set
\begin{equation*}
 B_{\varepsilon_j} = \left\{x \in \Omega: |G_j| \leq \varepsilon_j^{-\frac12} \right\}.
\end{equation*}
The boundedness of the sequence $(G_j)_{j}$ in $L^p(\Omega;\R^{2\times2})$  implies that $\1_{B_{\varepsilon_j}} \to 1$ boundedly in measure. \\
As the non-zero elements of $\mathbb{S}$ are bounded away from zero, we derive from the bound $\frac{|\mu_j|(\Omega)}{\varepsilon_j |\log \varepsilon_j|} \leq C$ that
\begin{equation*}
M_j \leq C \sum_{i=1}^{M_j} |\xi_{i,j}| = C \frac{|\mu_j|(\Omega)}{\varepsilon_j} \leq C |\log \varepsilon_j|.
\end{equation*}
Hence, by the assumptions on $\rho_{\varepsilon_j}$ we have
\begin{equation*}
|\Omega \setminus \Omega_{\rho_{\varepsilon_j}}(\mu_j)| \leq C |\log \varepsilon_j| \rho_{\varepsilon_j}^2 \to 0.
\end{equation*}
Consequently, $\1_{\Omega_{\rho_{\varepsilon_j}}(\mu_j)} \to 1$ boundedly in measure. \\
Eventually, define the function
\begin{equation*}
 \chi_{\varepsilon_j}(x) = \begin{cases}
                          1 &\text{if } x \in \Omega_{\rho_{\varepsilon_j}}(\mu_j) \cap A_{\varepsilon_j}^2 \cap B_{\varepsilon_j}, \\
                          0 &\text{else}.
                         \end{cases}
\end{equation*}
By the considerations above, we conclude that $\chi_{\varepsilon_j} \to 1$ boundedly in measure. 
Moreover, as $G_j \1_{A_{\varepsilon_j}^2} \rightharpoonup  \beta$ in $L^2(\Omega;\R^{2\times 2})$, we derive that also
\begin{equation}
G_j \chi_{\varepsilon_j} = G_j \1_{A_{\varepsilon_j}^2} \, \chi_{\varepsilon_j} \rightharpoonup \beta \text{ in } L^2(\Omega;\R^{2\times2}). \label{eq: L2convergence}
\end{equation}
Using the frame indifference of $W$ and \eqref{eq: taylor}, we estimate
\begin{align}
 &\frac1{\varepsilon_j^2 |\log \varepsilon_j|^2} \int_{\Omega_{\rho_{\varepsilon_j}}(\mu_j)} W(\beta_j) \, dx \nonumber \\= &\frac1{\varepsilon_j^2 |\log \varepsilon_j|^2} \int_{\Omega_{\rho_{\varepsilon_j}}(\mu_j)} W(R_{j}^T \beta_j) \, dx \nonumber \\
 \geq &\frac1{\varepsilon_j^2 |\log \varepsilon_j|^2} \int_{\Omega} \chi_{\varepsilon_j} W(R_{j}^T \beta_j) \, dx  \nonumber\\
 = &\frac1{\varepsilon_j^2 |\log \varepsilon_j|^2} \int_{\Omega} \chi_{\varepsilon_j} W(Id + \varepsilon_j |\log \varepsilon_j| G_j) \, dx\nonumber \\
 \geq &\int_{\Omega} \frac12 \mathcal{C} \left(\chi_{\varepsilon_j}G_j\right):\left(\chi_{\varepsilon_j}G_j\right) - \chi_{\varepsilon_j} \frac{\omega(\varepsilon_j |\log \varepsilon_j| |G_j|)}{\varepsilon_j^2 |\log \varepsilon_j|^2}\, dx \nonumber \\
 =&\int_{\Omega} \frac12  \mathcal{C}\left( \chi_{\varepsilon_j}G_j\right):\left(\chi_{\varepsilon_j}G_j\right) -  |\chi_{\varepsilon_j} G_j|^2 \, \frac{\omega(\varepsilon_j |\log \varepsilon_j| |G_j|)}{|G_j|^2 \varepsilon_j^2 |\log \varepsilon_j|^2}\, dx. \label{eq: liminflast} 
\end{align}
Now, recall \eqref{eq: L2convergence} and notice that the first term in \eqref{eq: liminflast} is lower semi-continuous with respect to weak convergence in $L^2(\Omega;\R^{2 \times 2})$.
For the second term in \eqref{eq: liminflast}, note that $(\chi_{\varepsilon_j} G_j)_{\varepsilon_j}$ is a bounded sequence in $L^2(\Omega;\R^{2\times2})$.
Moreover, note that whenever $\chi_{\varepsilon_j}(x) = 1$ we have $\varepsilon_j |\log \varepsilon_j| |G_j(x)| \leq \varepsilon_j^{\frac12} |\log \varepsilon_j| \rightarrow 0$.
Hence, by the properties of $\omega$ we find that
\begin{equation*}
 \chi_{\varepsilon_j} \frac{\omega(\varepsilon_j |\log \varepsilon_j| |G_j|)}{|G_j|^2 \varepsilon_j^2 |\log \varepsilon_j|^2} \rightarrow 0 \text{ in } L^{\infty}(\Omega).
\end{equation*}
Thus, 
\begin{equation*}
 \int_{\Omega} |\chi_{\varepsilon_j} G_j|^2 \frac{\omega(\varepsilon_j |\log \varepsilon_j| |G_j|)}{|G_j|^2 \varepsilon_j^2 |\log \varepsilon_j|^2}\, dx \rightarrow 0 \text{ as } \varepsilon_j \to 0.
\end{equation*}
Eventually, we derive from \eqref{eq: liminflast} 
\begin{equation*}
 \liminf_{j \to \infty} \frac1{\varepsilon_j^2 |\log \varepsilon_j|^2} \int_{\Omega_{\rho_{\varepsilon_j}}(\mu_j)} W(\beta_j) \, dx \geq 
 \int_{\Omega} \frac12 \mathcal{C} \beta: \beta \, dx.
\end{equation*} \\
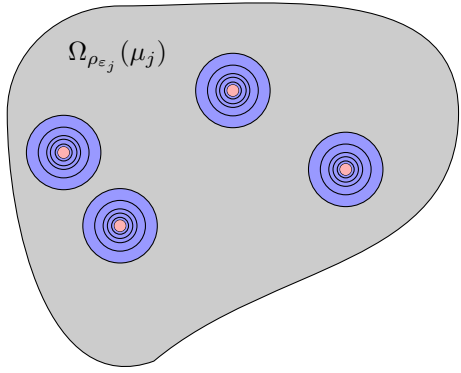
\begin{figure}
\centering
\begin{tikzpicture}[scale =1.5]
\fill[gray!40!white] (0,0) to[out = 90,in = 180] (1,0.95) to[out=0, in=90] (4,0) to[out = 270, in=40] (1.3,-2.2) to[out=200,in=270] (0,0);
\draw (0,0) to[out = 90,in = 180] (1,0.95) to[out=0, in=90] (4,0) to[out = 270, in=40] (1.3,-2.2) to[out=200,in=270] (0,0);
\draw (1.5,0.5) node[anchor=east] {$\Omega_{\rho_{\varepsilon_j}}(\mu_j)$};

\fill[blue!40!white] (1,-1) circle (0.33cm);
\draw (1,-1) circle (0.33cm);
\draw (1,-1) circle (0.225cm);
\draw (1,-1) circle (0.15cm);
\draw (1,-1) circle (0.1125cm);
\draw (1,-1) circle (0.075cm);
\draw (1,-1) circle (0.05cm);
\fill[red!30!white] (1,-1) circle (0.05cm);

\fill[blue!40!white] (3,-0.5) circle (0.33cm);
\draw (3,-0.5) circle (0.33cm);
\draw (3,-0.5) circle (0.225cm);
\draw (3,-0.5) circle (0.15cm);
\draw (3,-0.5) circle (0.1125cm);
\draw (3,-0.5) circle (0.075cm);
\draw (3,-0.5) circle (0.05cm);
\fill[red!30!white] (3,-0.5) circle (0.05cm);

\fill[blue!40!white] (0.5,-0.35) circle (0.33cm);
\draw (0.5,-0.35) circle (0.33cm);
\draw (0.5,-0.35) circle (0.225cm);
\draw (0.5,-0.35) circle (0.15cm);
\draw (0.5,-0.35) circle (0.1175cm);
\draw (0.5,-0.35) circle (0.075cm);
\draw (0.5,-0.35) circle (0.05cm);
\fill[red!30!white] (0.5,-0.35) circle (0.05cm);

\fill[blue!40!white] (2,0.2) circle (0.33cm);
\draw (2,0.2) circle (0.33cm);
\draw (2,0.2) circle (0.225cm);
\draw (2,0.2) circle (0.15cm);
\draw (2,0.2) circle (0.1175cm);
\draw (2,0.2) circle (0.075cm);
\draw (2,0.2) circle (0.05cm);
\fill[red!30!white] (2,0.2) circle (0.05cm);
\end{tikzpicture}
\caption{Sketch of the situation in the proof of the $\liminf$-inequality. The reduced domain $\Omega_{\rho_{\varepsilon_j}}(\mu_j)$ is drawn in gray, the balls around the dislocations with radius $\delta \varepsilon_j^{\alpha}$ are drawn in red. The annuli $B_{\rho_{\varepsilon_j}}(x_{i,j}) \setminus B_{\delta \varepsilon_j^{\alpha} }(x_{i,j})$ are drawn in blue and subdivided into annuli with constant ratio $\delta ^{-1}$.}
\label{fig: liminf}
\end{figure}
\noindent \textbf{\emph{Lower bound close the dislocations. }} Fix $\alpha, \delta \in (0,1)$. 
We subdivide for each $i \in \{1,\dots,M_j\}$ the annulus $B_{\rho_{\varepsilon_j}}(x_{i,j}) \setminus B_{\delta \varepsilon_j^\alpha}(x_{i,j})$ around the dislocation point $x_{i,j}$ into annuli with constant ratio $\delta^{-1}$ (see Figure \ref{fig: liminf}), namely define 
\begin{equation}
 C_j^{k,i} = B_{\delta^{k-1} \rho_{\varepsilon_j}}(x_{i,j})\setminus B_{\delta^{k} \rho_{\varepsilon_j}}(x_{i,j}) \label{eq: defcjki}
\end{equation}
for $k \in \{1,\dots, \tilde k_{j}\}$ where 
\begin{equation}\label{eq: deftildek}
 \tilde k_j = \left\lfloor \alpha \frac{|\log \varepsilon_j|}{|\log \delta|} - \frac{|\log \rho_{\varepsilon_j}|}{|\log \delta|} \right\rfloor + 1.
\end{equation}
Notice that for $k \leq \tilde k_j$ it holds $\delta^k \rho_{\varepsilon_j} \geq \delta^{\tilde k_j} \rho_{\varepsilon_j} \geq \delta \varepsilon^{\alpha}_j $.
Hence, for every $j \in \mathbb{N}$ and $i \in \{1, \dots, M_j\}$ we have
\begin{equation}
 \int_{B_{\rho_{\varepsilon_j}}(x_{i,j})\setminus B_{\delta \varepsilon_j^{\alpha}}(x_{i,j})} \frac{W(\beta_j)}{\varepsilon_j^2} \geq \sum_{k=1}^{\tilde k_j} \int_{C^{k,i}_j} \frac{W(\beta_j)}{\varepsilon_j^2} \,dx. \label{eq: decompositiondeltaannuli}
\end{equation}
Similar to the proof of \cite[Proposition 3.11]{ScZe12}, one can use a contradiction argument to show that there exists a sequence $\sigma_j \stackrel{j\to \infty}{\rightarrow} 0$ such that for all $j \in \mathbb{N}$, $i \in \{1, \dots ,M_j\}$, and $k \in \{1, \dots, \tilde k_j\}$ it holds
 \begin{equation}
  \int_{C_j^{k,i}} \frac{W(\beta_j)}{\varepsilon_j^2} \,dx \geq  \psi(R^T\xi_{i,j},\delta) - \sigma_j |\xi_{i,j}|^2, \label{eq: energydeltaannuli}
 \end{equation}
where $\psi(\cdot,\delta)$ is defined as in \eqref{definition: psi}.

\noindent Combining \eqref{eq: deftildek}, \eqref{eq: decompositiondeltaannuli}, and \eqref{eq: energydeltaannuli} yields
\begin{align}
&\frac1{|\log \varepsilon_j|^2} \sum_{i=1}^{M_j} \int_{B_{\rho_{\varepsilon_j}}(x_{i,j})} \frac{W(\beta_j)}{\varepsilon_j^2} \nonumber \\
 \geq &\frac1{|\log \varepsilon_j|^2} \sum_{i = 1}^{M_j} \tilde k_j \left(\psi(R^T \xi_{i,j},\delta) - \sigma_j |\xi_{i,j}|^2\right) \nonumber\\
 \geq &\frac1{|\log \varepsilon_j|} \sum_{i = 1}^{M_j} \left(\alpha - \frac{| \log \rho_{\varepsilon_j}|}{|\log \varepsilon_j|} \right)  \left(\frac{\psi(R^T \xi_{i,j},\delta)}{|\log \delta|} - \frac{\sigma_j |\xi_{i,j}|^2}{|\log \delta|}\right).
 \label{eq: energysum1}
\end{align}
From Proposition \ref{prop: convpsi} we know that there exists $K>0$ (which does not depend on $\delta$) such that for every $\xi \in \R^2$ it holds
\begin{equation*}
\left| \frac{\psi(\xi, \delta)}{|\log \delta|} - \psi(\xi) \right| \leq \frac{K |\xi|^2}{|\log \delta|}.
\end{equation*}
Hence,
\begin{align*}
 &\frac1{|\log \varepsilon_j|^2} \sum_{i=1}^{M_j} \int_{B_{\rho_{\varepsilon_j}}(x_{i,j})} \frac{W(\beta_j)}{\varepsilon_j^2} 
\\
 \geq &\frac1{|\log \varepsilon_j|} \sum_{i = 1}^{M_j} \left(\alpha - \frac{| \log \rho_{\varepsilon_j}|}{|\log \varepsilon_j|}\right)
 \left(\psi(R^T \xi_{i,j}) - \frac{K|\xi_{i,j}|^2}{|\log \delta|} - \frac{\sigma_j |\xi_{i,j}|^2}{|\log \delta|}\right). 
\end{align*} 
Arguing as in the proof of the compactness, we find similarly to \eqref{eq: estimatemuj} that
\begin{equation*}
  \frac1{|\log \varepsilon_j|} \sum_{i=1}^{M_j} |\xi_{i,j}|^2 \leq C
\end{equation*}
and thus
\begin{align}
 \frac1{|\log \varepsilon_j|^2} \sum_{i=1}^{M_j} \int_{B_{\rho_{\varepsilon_j}}(x_{i,j})} \frac{W(\beta_j)}{\varepsilon_j^2} \geq 
 &\frac1{|\log \varepsilon_j|} \left[ \sum_{i=1}^{M_j} \left(\alpha - \frac{|\log \rho_{\varepsilon_j}|}{|\log \varepsilon_j|}\right) \psi(R^T \xi_{i,j}) \right] \label{eq: liminfclose2} \\  
 &- C \left(\alpha - \frac{|\log \rho_{\varepsilon_j}|}{|\log \varepsilon_j|}\right) \left( \frac{1+ \sigma_j}{|\log \delta|} \right). \label{eq: liminfclose3}
\end{align}

\noindent Now, write $\tilde{\mu}_{j} = \frac{\mu_j}{\varepsilon_j | \log \varepsilon_j |}$. 
Using $\psi \geq \varphi$ and the $1$-homogeneity of $\varphi$ we estimate 
\begin{align}
 \frac1{|\log \varepsilon_j|} \sum_{i=1}^{M_j}\left(\alpha - \frac{|\log \rho_{\varepsilon_j}|}{| \log \varepsilon_j |} \right) \psi(R^T \xi_{i,j})  
 \geq \left(\alpha - \frac{|\log \rho_{\varepsilon_j}|}{| \log \varepsilon_j |}\right) \int_{\Omega} \varphi\left( R, \frac{d \tilde{\mu}_{j}}{d |\tilde{\mu}_{j}|} \right) \,d|\tilde{\mu}_{j}|.\label{eq: liminfclose3.5}
\end{align}
By the definition of the convergence of $(\mu_j,\beta_j)$ to the triple $(\beta,\mu,R)$, it holds in particular that $\tilde{\mu}_{\varepsilon_j} \stackrel{*}{\rightharpoonup} \mu$ in $\mathcal{M}(\Omega;\R^2)$.
As $\varphi$ is a continuous, convex, and $1$-homogeneous function, we may apply Reshetnyak's theorem to conclude
\begin{equation}
 \liminf_{j \to \infty} \frac1{|\log \varepsilon_j|} \sum_{i=1}^{M_j}\left(\alpha - \frac{|\log \rho_{\varepsilon_j}|}{| \log \varepsilon_j |}\right) \psi(R^T \xi_{i.j}) 
 \geq \alpha \int_{\Omega} \varphi\left(R, \frac{d \mu}{d|\mu|}\right) \,d |\mu|. \label{eq: liminfclose4}
\end{equation}
Combining \eqref{eq: liminfclose3} and \eqref{eq: liminfclose4} yields
\begin{align*}
 &\liminf_{j \to \infty} \frac1{|\log \varepsilon_j|^2} \sum_{i=1}^{M_j} \int_{B_{\rho_{\varepsilon_j}}(x_{i,j})} \frac{W(\beta_j)}{\varepsilon_j^2} \\ \geq
 &\alpha \int_{\Omega} \varphi\left(R, \frac{d \mu}{d|\mu|}\right) \,d |\mu| 
 - \limsup_{j \to \infty} C \left(\alpha - \frac{|\log \rho_{\varepsilon_j}|}{|\log \varepsilon_j|}\right) \left( \frac{1+ \sigma_j}{|\log \delta|} \right) \\
 = &\alpha \int_{\Omega} \varphi\left(R, \frac{d \mu}{d|\mu|}\right) \,d |\mu| - \frac{C\alpha}{|\log \delta|}.
\end{align*}
Letting $\alpha \to 1$ and $\delta \to 0$ finishes the proof of the lower bound close to the dislocations. \\
Combining the estimates close and far from the dislocations shows the claimed $\liminf$-inequality.
\end{proof}
Finally, we prove the existence of a recovery sequence for the energy $E^{crit}$.
\begin{proposition}[The $\Gamma$-$\limsup$-inequality]\label{prop: limsup}
Let $\varepsilon_j \to 0$.
 Let $R\in SO(2)$, $\beta \in L^2(\Omega; \R^{2\times2})$ such that $\operatorname{curl} \beta = R^T \mu  \in \mathcal{M}(\Omega;\R^2)$. 
 Then there exists a sequence of dislocation measures and associated strains $(\mu_{j},\beta_{j})_{j} \subseteq \mathcal{M}(\Omega; \R^2) \times  L^p(\Omega,\R^{2\times2})$ converging to $(\mu,\beta,R)$ in the sense of Definition \ref{def: convcrit} such that
 \begin{equation*}
  \limsup_{j \to \infty} E_{\varepsilon_j}(\mu_{j},\beta_{j}) \leq E^{crit}(\mu,\beta,R).
 \end{equation*}
\end{proposition}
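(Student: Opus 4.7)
I would follow the two-step blueprint of \cite{GaLePo10,MuScZe14}, adapted to the mixed-growth setting. Writing $\gamma_j := (R^T\beta_j - Id)/(\varepsilon_j|\log\varepsilon_j|)$, the required convergence becomes $\gamma_j \rightharpoonup \beta$ in $L^p$ and $\operatorname{curl}\gamma_j = R^T\mu_j/(\varepsilon_j|\log\varepsilon_j|) \weakstar R^T\mu = \operatorname{curl}\beta$.

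\textbf{Density reduction.} A diagonal argument based on $L^2$-continuity of the elastic term and Reshetnyak continuity of $\mu \mapsto \int \varphi(R,d\mu/d|\mu|)\,d|\mu|$ (applicable since $\varphi(R,\cdot)$ is convex, continuous and $1$-homogeneous) reduces the problem to targets of the form
\begin{equation*}
  \mu = \sum_{k=1}^K \lambda_k \xi_k \chi_{A_k}\mathcal{L}^2, \qquad \beta \in C^\infty(\overline\Omega;\R^{2\times 2}),
\end{equation*}
with disjoint Lipschitz pieces $A_k\subset\Omega$, $\lambda_k > 0$, $\xi_k \in \mathbb{S}$, and $\operatorname{curl}\beta = R^T\mu$. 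Using the definition \eqref{definition: phi} I would further subdivide each piece so that the trivial one-term decomposition is $\varphi$-optimal, i.e., $\int_\Omega \varphi(R,d\mu/d|\mu|)\,d|\mu| = \sum_k \lambda_k|A_k|\psi(R^T\xi_k)$. Compatibility $\operatorname{curl}\beta = R^T\mu$ is maintained along the approximation by absorbing the defect into a gradient correction of vanishing $L^2$-norm obtained from a Poisson problem.

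\textbf{Explicit construction.} For such a target, in each $A_k$ place dislocation points $\{x_{i,j}\}_i$ on a periodic grid of spacing $L_j$ with $\pi(L_j/2)^2 = 1/(\lambda_k|\log\varepsilon_j|)$; this yields $\sim \lambda_k|A_k||\log\varepsilon_j|$ points, and $\varepsilon_j \ll \rho_{\varepsilon_j} \ll L_j \to 0$ by the hypotheses on $\rho_{\varepsilon_j}$. Set $\mu_j := \sum_{i,k} \varepsilon_j\xi_k\delta_{x_{i,j}} \in X_{\varepsilon_j}$ and define
\begin{equation*}
  \beta_j = R\bigl(Id + \varepsilon_j|\log\varepsilon_j|\,\beta\bigr) + \varepsilon_j R\sum_{i,k} \hat\eta_{i,j}
\end{equation*}
on $\Omega \setminus \bigcup_{i,k} B_{\varepsilon_j}(x_{i,j})$, where $\hat\eta_{i,j}$ is supported in $B_{L_j/2}(x_{i,j})\setminus B_{\varepsilon_j}(x_{i,j})$ with $\operatorname{curl}\hat\eta_{i,j} = R^T\xi_k\delta_{x_{i,j}} - \lambda_k|\log\varepsilon_j|\,R^T\xi_k\chi_{B_{L_j/2}(x_{i,j})}$ (of total mass zero by the choice of $L_j$, so compactly supportable; the diffuse term compensates $\operatorname{curl}\beta$ so that $\operatorname{curl}\beta_j = \mu_j$), chosen to approximately minimize the quadratic elastic self-energy. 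On each $B_{\varepsilon_j}(x_{i,j})$ use the canonical singular profile $\beta_j - R \sim R\,\varepsilon_j\xi_k\otimes (x - x_{i,j})^\perp/(2\pi|x-x_{i,j}|^2)$; the core contribution $\int_{B_{\varepsilon_j}} W \lesssim \int_{B_{\varepsilon_j}} \varepsilon_j^p/|x|^p\,dx \sim \varepsilon_j^2$, summed over $\mathcal{O}(|\log\varepsilon_j|)$ cores and divided by $\varepsilon_j^2|\log\varepsilon_j|^2$, is $\mathcal{O}(1/|\log\varepsilon_j|) \to 0$.

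\textbf{Energy estimate and main obstacle.} Outside the cores, frame indifference and the Taylor expansion $W(Id+F) = \tfrac12\mathcal{C}F:F + o(|F|^2)$ yield, after division by $\varepsilon_j^2|\log\varepsilon_j|^2$, the bulk term $\tfrac12 \int_\Omega \mathcal{C}\beta:\beta\,dx + o(1)$, a vanishing cross-term $\tfrac{1}{|\log\varepsilon_j|} \int \mathcal{C}\beta : \sum_{i,k}\hat\eta_{i,j}$ (controlled by $\|\hat\eta_{i,j}\|_{L^1} \sim L_j$, so the total is $\sim L_j \to 0$), and the self-energy contribution $\tfrac{1}{2|\log\varepsilon_j|^2}\sum_{i,k} \int \mathcal{C}\hat\eta_{i,j}:\hat\eta_{i,j}$. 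Each individual term is $\tfrac{1}{|\log\varepsilon_j|^2}\psi(R^T\xi_k, \varepsilon_j/(L_j/2))(1+o(1))$, which by Proposition~\ref{prop: convpsi} and Remark~\ref{remark: psiext} equals $\psi(R^T\xi_k)/|\log\varepsilon_j|\cdot(1+o(1))$; summation over $\sim\lambda_k|A_k||\log\varepsilon_j|$ cores in each $A_k$ and over $k$ produces $\sum_k \lambda_k|A_k|\psi(R^T\xi_k) = \int_\Omega \varphi(R,d\mu/d|\mu|)\,d|\mu|$. The principal obstacle I expect is a clean treatment of the transition between the quadratic and $L^p$ regimes near $\partial B_{\varepsilon_j}(x_{i,j})$: the Taylor expansion around $Id$ is valid only where $\varepsilon_j|\log\varepsilon_j|\,|\hat\eta_{i,j}| \ll 1$, i.e., on $|x - x_{i,j}| \gg \varepsilon_j|\log\varepsilon_j|$, so on the intermediate annulus $B_{\varepsilon_j|\log\varepsilon_j|}(x_{i,j}) \setminus B_{\varepsilon_j}(x_{i,j})$ one must instead rely on the $|F|^p$ bound of property~\ref{item: property4} and verify that this contribution is $o(\varepsilon_j^2|\log\varepsilon_j|^2)$; here the hypothesis $p<2$ is essential. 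A secondary subtlety is the precise boundary matching between $\hat\eta_{i,j}$ and the inner singular profile on $\partial B_{\varepsilon_j}(x_{i,j})$ to ensure $\operatorname{curl}\beta_j = \mu_j$ exactly rather than up to a boundary-layer error.
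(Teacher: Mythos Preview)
Your blueprint is essentially correct and follows the same strategy as the paper: density reduction to piecewise constant dislocation densities (with the decomposition in \eqref{definition: phi} built in), explicit construction via singular profiles on a grid, and a Taylor expansion separating bulk and self-energy. The differences are in the handling of precisely the two obstacles you flag at the end, and in both cases the paper's route is simpler.

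\emph{No matching is needed.} Because $|x|^{-1}\in L^p_{\mathrm{loc}}$, the paper uses a single profile $\eta^j_i(x)=|x-x_{i,j}|^{-1}\Gamma_{R^T\xi_{i,j}}((x-x_{i,j})/|x-x_{i,j}|)$ on the \emph{full} ball $B_{r_{\varepsilon_j}}(x_{i,j})$, including the core. There is no separate inner piece on $B_{\varepsilon_j}(x_{i,j})$ and hence no boundary layer on $\partial B_{\varepsilon_j}$; the curl is exactly the Dirac mass plus a term on $\partial B_{r_{\varepsilon_j}}$, which is compensated not by a built-in diffuse part of $\hat\eta_{i,j}$ but by two auxiliary corrections $\tilde K_{\mu_j}^{r_{\varepsilon_j}}$ and $\tilde\beta_j$ (the latter via a Poisson problem), both vanishing in $L^2$ after rescaling. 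Your idea of incorporating the diffuse compensation directly into $\hat\eta_{i,j}$ is equivalent in spirit, but supporting it on an annulus forces you into the matching issue you mention.

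\emph{The transition zone is bypassed by splitting at $\varepsilon_j^{\alpha}$.} Rather than split the energy at $\partial B_{\varepsilon_j}$, the paper splits at $\partial B_{\varepsilon_j^{\alpha}}$ for $\alpha\in(0,1)$. On the outer region $\Omega_{\varepsilon_j^{\alpha}}(\mu_j)$ one has $|\eta^j|\le C\varepsilon_j^{1-\alpha}\to 0$ uniformly, so the Taylor expansion is valid everywhere there and no intermediate annulus needs special treatment; this yields the bulk term and, from the annuli $B_{r_{\varepsilon_j}}\setminus B_{\varepsilon_j^{\alpha}}$, the self-energy $\alpha\int_\Omega\varphi(R,d\mu/d|\mu|)\,d|\mu|$. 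On the inner balls $B_{\varepsilon_j^{\alpha}}$ one uses only the growth bound $W\le C(\mathrm{dist}^2\wedge\mathrm{dist}^p)$ and a direct computation shows the contribution is $\le C(1-\alpha)+o(1)$. Sending $\alpha\to 1$ closes the argument. Your proposed treatment of the annulus $B_{\varepsilon_j|\log\varepsilon_j|}\setminus B_{\varepsilon_j}$ via the $|F|^p$ bound would also work (the calculation gives a contribution of order $|\log\varepsilon_j|^{1-p}\to 0$), but the $\varepsilon_j^{\alpha}$ device avoids having to locate the transition at all.
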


\begin{proof}
We will use that the limit energy $E^{crit}$ is the same as in \cite{GaLePo10} and \cite{MuScZe14}.
In particular, we make use of the density result in \cite{GaLePo10} that allows us to restrict ourselves to the case that $\mu$ is locally constant and absolutely continuous with respect to the Lebesgue measure. \\ \newline
 \noindent \textbf{Step 1. }\emph{$\mu = \xi \,dx$ for some $\xi \in \R^2$.} \\
 Let $\lambda_1 , \dots, \lambda_M > 0$ and $\xi_1, \dots, \xi_M \in \mathbb{S}$ such that $\xi = \sum_{k=1}^M \lambda_k \xi_k$ and $\varphi(R,\xi) = \sum_{k=1}^M \lambda_k \psi(R^T \xi_k)$.
 Moreover, set
 \begin{equation*}
  \Lambda = \sum_{k=1}^M \lambda_k \text { and } r_{\varepsilon_j} = \frac{1}{2 \sqrt{\Lambda |\log \varepsilon_j|}}.
 \end{equation*}
Then, by the assumptions on $\rho_{\varepsilon_j}$, it holds that $\frac{\rho_{\varepsilon_j}}{r_{\varepsilon_j}} = 2 \sqrt{\Lambda} \sqrt{\rho_{\varepsilon_j}^2 |\log \varepsilon_j|} \to 0$.
According to  \cite[Lemma 11]{GaLePo10}, there exists a sequence of measures $\mu_{j} = \sum_{k=1}^M \varepsilon_j \xi_k \mu_{j}^k$ with $\mu_{j}^k$ of the type $\sum_{l=1}^{M_j^k} \delta_{x^k_{l,j}}$ for some $x^k_{l,j} \in \Omega$ such that 
  for all $x,y \in \operatorname{supp}(\mu_j)$ it holds $B_{r_{\varepsilon_j}}(x) \subseteq \Omega$ and $|x-y| \geq 2 r_{\varepsilon_j}$.
  Moreover,
  \begin{equation}
   \frac{|\mu_{j}^k|(\Omega)}{|\log \varepsilon_j|} \rightarrow \lambda_k |\Omega| \text{ and } \label{eq: |mu|}
   \frac{\mu_{j}}{\varepsilon_j |\log \varepsilon_j|} \stackrel{*}{\rightharpoonup} \mu \text{ in } \mathcal{M}(\Omega;\R^2).
  \end{equation}

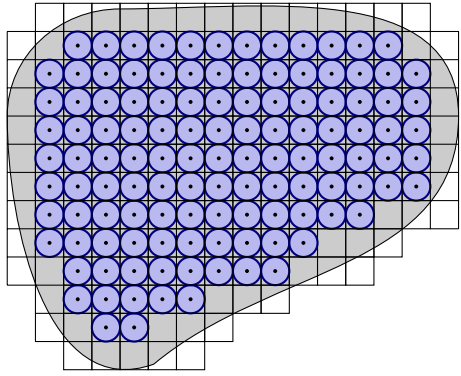
\begin{figure}
\centering
\begin{tikzpicture}[scale = 1.5]
\fill[gray!40!white] (0,0) to[out = 90,in = 180] (1,0.95) to[out=0, in=90] (4,0) to[out = 270, in=40] (1.3,-2.2) to[out=200,in=270] (0,0);
\draw (0,0) to[out = 90,in = 180] (1,0.95) to[out=0, in=90] (4,0) to[out = 270, in=40] (1.3,-2.2) to[out=200,in=270] (0,0);

\pgfmathsetmacro{\x}{0.25}
\pgfmathsetmacro{\y}{-1.25}
\pgfmathsetmacro{\c}{-1}
\pgfmathsetmacro{\v}{0.25}
\pgfmathsetmacro{\z}{0.5}
\foreach \a in {\y,\c,...,\v}{
\fill[blue!30!white,opacity=0.7] (\x+0.125,\a + 0.125) circle (0.125cm);
\draw[blue!50!black,thick]  (\x+0.125,\a + 0.125) circle (0.125cm);
\fill (\x+0.125,\a + 0.125) circle (0.5pt);
}
\draw[step =0.25] (\x,\y-0.75) grid (\x+0.25,\z+0.5);

\pgfmathsetmacro{\x}{0.5}
\pgfmathsetmacro{\y}{-1.75}
\pgfmathsetmacro{\c}{-1.5}
\pgfmathsetmacro{\v}{0.5}
\pgfmathsetmacro{\z}{0.75}
\foreach \a in {\y,\c,...,\v}{
\fill[blue!30!white,opacity=0.7] (\x+0.125,\a + 0.125) circle (0.125cm);
\draw[blue!50!black,thick]  (\x+0.125,\a + 0.125) circle (0.125cm);
\fill (\x+0.125,\a + 0.125) circle (0.5pt);
}
\foreach \a in {-2.25,-2,...,0.75}{
\draw (\x,\a) -- (\x+0.25,\a) -- (\x+0.25,\a+0.25) -- (\x,\a+0.25) -- cycle;
}

\pgfmathsetmacro{\x}{0.75}
\pgfmathsetmacro{\y}{-2}
\pgfmathsetmacro{\c}{-1.75}
\pgfmathsetmacro{\v}{0.5}
\pgfmathsetmacro{\z}{0.75}
\foreach \a in {\y,\c,...,\v}{
\fill[blue!30!white,opacity=0.7] (\x+0.125,\a + 0.125) circle (0.125cm);
\draw[blue!50!black,thick]  (\x+0.125,\a + 0.125) circle (0.125cm);
\fill (\x+0.125,\a + 0.125) circle (0.5pt);
}
\foreach \a in {-2.25,-2,...,0.75}{
\draw (\x,\a) -- (\x+0.25,\a) -- (\x+0.25,\a+0.25) -- (\x,\a+0.25) -- cycle;
}

\pgfmathsetmacro{\x}{1}
\pgfmathsetmacro{\y}{-2}
\pgfmathsetmacro{\c}{-1.75}
\pgfmathsetmacro{\v}{0.5}
\pgfmathsetmacro{\z}{0.75}
\foreach \a in {\y,\c,...,\v}{
\fill[blue!30!white,opacity=0.7] (\x+0.125,\a + 0.125) circle (0.125cm);
\draw[blue!50!black,thick]  (\x+0.125,\a + 0.125) circle (0.125cm);
\fill (\x+0.125,\a + 0.125) circle (0.5pt);
}
\foreach \a in {-2.25,-2,...,0.75}{
\draw (\x,\a) -- (\x+0.25,\a) -- (\x+0.25,\a+0.25) -- (\x,\a+0.25) -- cycle;
}

\pgfmathsetmacro{\x}{1.25}
\pgfmathsetmacro{\y}{-1.75}
\pgfmathsetmacro{\c}{-1.5}
\pgfmathsetmacro{\v}{0.5}
\pgfmathsetmacro{\z}{0.75}
\foreach \a in {\y,\c,...,\v}{
\fill[blue!30!white,opacity=0.7] (\x+0.125,\a + 0.125) circle (0.125cm);
\draw[blue!50!black,thick]  (\x+0.125,\a + 0.125) circle (0.125cm);
\fill (\x+0.125,\a + 0.125) circle (0.5pt);
}
\foreach \a in {-2.25,-2,-1.75,...,0.75}{
\draw (\x,\a) -- (\x+0.25,\a) -- (\x+0.25,\a+0.25) -- (\x,\a+0.25) -- cycle;
}

\pgfmathsetmacro{\x}{1.5}
\pgfmathsetmacro{\c}{-1.5}
\pgfmathsetmacro{\v}{0.5}
\pgfmathsetmacro{\y}{-1.75}
\pgfmathsetmacro{\z}{0.75}
\foreach \a in {\y,\c,...,\v}{
\fill[blue!30!white,opacity=0.7] (\x+0.125,\a + 0.125) circle (0.125cm);
\draw[blue!50!black,thick]  (\x+0.125,\a + 0.125) circle (0.125cm);
\fill (\x+0.125,\a + 0.125) circle (0.5pt);
}
\foreach \a in {-2.25,-2,-1.75,...,0.75}{
\draw (\x,\a) -- (\x+0.25,\a) -- (\x+0.25,\a+0.25) -- (\x,\a+0.25) -- cycle;
}

\pgfmathsetmacro{\x}{1.75}
\pgfmathsetmacro{\y}{-1.5}
\pgfmathsetmacro{\c}{-1.25}
\pgfmathsetmacro{\v}{0.5}
\pgfmathsetmacro{\z}{0.75}
\foreach \a in {\y,\c,...,\v}{
\fill[blue!30!white,opacity=0.7] (\x+0.125,\a + 0.125) circle (0.125cm);
\draw[blue!50!black,thick]  (\x+0.125,\a + 0.125) circle (0.125cm);
\fill (\x+0.125,\a + 0.125) circle (0.5pt);
}
\foreach \a in {-2,-1.75,-1.5,...,0.75}{
\draw (\x,\a) -- (\x+0.25,\a) -- (\x+0.25,\a+0.25) -- (\x,\a+0.25) -- cycle;
}

\pgfmathsetmacro{\x}{2}
\pgfmathsetmacro{\y}{-1.5}
\pgfmathsetmacro{\c}{-1.25}
\pgfmathsetmacro{\v}{0.5}
\pgfmathsetmacro{\z}{0.75}
\foreach \a in {\y,\c,...,\v}{
\fill[blue!30!white,opacity=0.7] (\x+0.125,\a + 0.125) circle (0.125cm);
\draw[blue!50!black,thick]  (\x+0.125,\a + 0.125) circle (0.125cm);
\fill (\x+0.125,\a + 0.125) circle (0.5pt);
}
\foreach \a in {-1.75,-1.5,...,0.75}{
\draw (\x,\a) -- (\x+0.25,\a) -- (\x+0.25,\a+0.25) -- (\x,\a+0.25) -- cycle;
}

\pgfmathsetmacro{\x}{2.25}
\pgfmathsetmacro{\y}{-1.5}
\pgfmathsetmacro{\c}{-1.25}
\pgfmathsetmacro{\v}{0.5}
\pgfmathsetmacro{\z}{0.75}
\foreach \a in {\y,\c,...,\v}{
\fill[blue!30!white,opacity=0.7] (\x+0.125,\a + 0.125) circle (0.125cm);
\draw[blue!50!black,thick]  (\x+0.125,\a + 0.125) circle (0.125cm);
\fill (\x+0.125,\a + 0.125) circle (0.5pt);
}
\foreach \a in {-1.75,-1.5,...,0.75}{
\draw (\x,\a) -- (\x+0.25,\a) -- (\x+0.25,\a+0.25) -- (\x,\a+0.25) -- cycle;
}

\pgfmathsetmacro{\x}{2.5}
\pgfmathsetmacro{\y}{-1.25}
\pgfmathsetmacro{\c}{-1}
\pgfmathsetmacro{\v}{0.5}
\pgfmathsetmacro{\z}{0.75}
\foreach \a in {\y,\c,...,\v}{
\fill[blue!30!white,opacity=0.7] (\x+0.125,\a + 0.125) circle (0.125cm);
\draw[blue!50!black,thick]  (\x+0.125,\a + 0.125) circle (0.125cm);
\fill (\x+0.125,\a + 0.125) circle (0.5pt);
}
\foreach \a in {-1.5,-1.25,...,0.75}{
\draw (\x,\a) -- (\x+0.25,\a) -- (\x+0.25,\a+0.25) -- (\x,\a+0.25) -- cycle;
}

\pgfmathsetmacro{\x}{2.75}
\pgfmathsetmacro{\y}{-1}
\pgfmathsetmacro{\c}{-0.75}
\pgfmathsetmacro{\v}{0.5}
\pgfmathsetmacro{\z}{0.75}
\foreach \a in {\y,\c,...,\v}{
\fill[blue!30!white,opacity=0.7] (\x+0.125,\a + 0.125) circle (0.125cm);
\draw[blue!50!black,thick]  (\x+0.125,\a + 0.125) circle (0.125cm);
\fill (\x+0.125,\a + 0.125) circle (0.5pt);
}
\foreach \a in {-1.5,-1.25,-1,...,0.75}{
\draw (\x,\a) -- (\x+0.25,\a) -- (\x+0.25,\a+0.25) -- (\x,\a+0.25) -- cycle;
}

\pgfmathsetmacro{\x}{3}
\pgfmathsetmacro{\y}{-1}
\pgfmathsetmacro{\c}{-0.75}
\pgfmathsetmacro{\v}{0.5}
\pgfmathsetmacro{\z}{0.75}
\foreach \a in {\y,\c,...,\v}{
\fill[blue!30!white,opacity=0.7] (\x+0.125,\a + 0.125) circle (0.125cm);
\draw[blue!50!black,thick]  (\x+0.125,\a + 0.125) circle (0.125cm);
\fill (\x+0.125,\a + 0.125) circle (0.5pt);
}
\foreach \a in {-1.5,-1.25,-1,...,0.75}{
\draw (\x,\a) -- (\x+0.25,\a) -- (\x+0.25,\a+0.25) -- (\x,\a+0.25) -- cycle;
}

\pgfmathsetmacro{\x}{3.25}
\pgfmathsetmacro{\y}{-0.75}
\pgfmathsetmacro{\c}{-0.5}
\pgfmathsetmacro{\v}{0.5}
\pgfmathsetmacro{\z}{0.75}
\foreach \a in {\y,\c,...,\v}{
\fill[blue!30!white,opacity=0.7] (\x+0.125,\a + 0.125) circle (0.125cm);
\draw[blue!50!black,thick]  (\x+0.125,\a + 0.125) circle (0.125cm);
\fill (\x+0.125,\a + 0.125) circle (0.5pt);
}
\foreach \a in {-1.25,-1,-0.75,...,0.75}{
\draw (\x,\a) -- (\x+0.25,\a) -- (\x+0.25,\a+0.25) -- (\x,\a+0.25) -- cycle;
}

\pgfmathsetmacro{\x}{3.5}
\pgfmathsetmacro{\y}{-0.75}
\pgfmathsetmacro{\c}{-0.5}
\pgfmathsetmacro{\v}{0.25}
\pgfmathsetmacro{\z}{0.5}
\foreach \a in {\y,\c,...,\v}{
\fill[blue!30!white,opacity=0.7] (\x+0.125,\a + 0.125) circle (0.125cm);
\draw[blue!50!black,thick]  (\x+0.125,\a + 0.125) circle (0.125cm);
\fill (\x+0.125,\a + 0.125) circle (0.5pt);
}
\foreach \a in {-1,-0.75,...,0.75}{
\draw (\x,\a) -- (\x+0.25,\a) -- (\x+0.25,\a+0.25) -- (\x,\a+0.25) -- cycle;
}

\draw[step = 0.25cm] (0,-1.5) grid (0.25,0.75);
\draw[step=0.25cm] (3.75,-1) grid (4,0.75);

\end{tikzpicture}
\caption{Sketch of the construction of the measures $\mu_j$, $\tilde{\mu}_j^{r_{\varepsilon_j}}$, and $\hat{\mu}_j^{r_{\varepsilon_j}}$ for a Burgers vector $\xi$ such that $\varphi(R,\xi) = \psi(R^T\xi)$: cover $\Omega$ with squares of side length $\sqrt{|\log \varepsilon_j|}^{-1}$. In every square that is included in $\Omega$ put a Dirac mass with weight $\xi$ (black dot) for $\mu_j$, a continuously distributed mass of $\xi$ on the circle of diameter $ \sqrt{|\log \varepsilon_j|}^{-1}$ (blue circle) for $\tilde{\mu}_j^{r_{\varepsilon_j}}$, and a measure of mass $\xi$ distributed on the boundary of that circle (dark blue) for $\hat{\mu}_j^{r_{\varepsilon_j}}$.}
\label{fig: constrmu}
\end{figure}

\noindent Note that by construction it holds $\mu_{j} \in X_{\varepsilon_j}$. \\
It is useful to combine the two summations in the definition of $\mu_{j}$ into
\begin{equation*}
\mu_{j} = \sum_{i=1}^{M_j} \varepsilon_j \xi_{i,j} \delta_{x_{i,j}}
\end{equation*}
for appropriate $\xi_{i,j} \in \mathbb{S}$ and $x_{i,j} \in \Omega$.
It follows that $M_j \leq C r_{\varepsilon_j}^{-2}$. \\
In \cite{BaBaSc80}, it is shown that for every $i=1, \dots, M_j$ there exists a strain field $\eta_i^{j}: \R^2 \rightarrow \R^{2\times2}$ of the form $\eta_i^{j} = \frac1{|x - x_{i,j}|} \Gamma_{R^T \xi_{i,j}}\left( \frac{x - x_{i,j}}{|x - x_{i,j}|} \right)$ solving
\begin{equation*}
 \begin{cases}
  \operatorname{curl} \eta_i^j = R^T \xi_{i,j} \delta_{x_{i,j}} &\text{in } \R^2, \\
  \operatorname{div} \mathcal{C} \eta_i^j = 0 &\text{in } \R^2.
 \end{cases}
\end{equation*}
The functions $\Gamma_{R^T \xi_{i,j}}: \R^2 \rightarrow \R^{2 \times 2}$ are uniformly bounded in $i$ and $j$. \\
Define
\begin{equation}\label{def: etaeps}
 \eta^{j} = \sum_{i=1}^{M_j} \varepsilon_j \eta_i^{j} \, \1_{B_{r_{\varepsilon_j}}(x_{i,j})}.
\end{equation}
Then  $\operatorname{curl }\eta^{j}$ equals $R^T\mu_{j}$ up to an error term arising from $\1_{B_{r_{\varepsilon_j}}(x_{i,j})}$, precisely
\begin{align}
\operatorname{curl} \eta^{j} &= \sum_{i=1}^{M_j} \varepsilon_j R^T\xi_{i,j} \delta_{x_{i,j}} - \varepsilon_j \eta_i^{j}(x) \, \frac{(x-x_{i,j})^{\perp}}{|(x-x_{i,j})|} \, d\mathcal{H}^1_{|\partial B_{r_{\varepsilon_j}}(x_{i,j})} \nonumber \\
&= R^T \mu_{j} - \sum_{i=1}^{M_j} \varepsilon_j \frac{ \eta_i^{j}(x) \,(x-x_{i,j})^{\perp}}{r_{\varepsilon_j}} \, d\mathcal{H}^1_{|\del B_{r_{\varepsilon_j}}(x_{i,j})} \nonumber \\
&=: R^T\mu_{j} - R^T\hat{\mu}_{j}^{r_{\varepsilon_j}}. \label{eq: defhatmu}
\end{align}
Note that $\hat{\mu}_{j}^{r_{\varepsilon_j}} \in H^{-1}(\Omega;\R^2)$. 
\\
Moreover, we define the auxiliary measure
\begin{equation}\label{eq: deftildemu}
 \tilde{\mu}_{j}^{r_{\varepsilon_j}} = R \sum_{i = 1}^{M_j} 2 \varepsilon_j \frac{ \eta_i^{j}(x) \, (x -x_{i,j})^{\perp} }{r_{\varepsilon_j}^2} \1_{B_{r_{\varepsilon_j}}(x_{i, j})} \, dx.
\end{equation}
For a sketch of the measures $\tilde{\mu}_j^{r_{\varepsilon_j}}$ and $\hat{\mu}_j^{r_{\varepsilon_j}}$, see Figure \ref{fig: constrmu}.\\
A straightforward computation shows that for all $i \in \{1, \dots, M_j \}$ it holds
\begin{equation*}
 \tilde{\mu}_{j}^{r_{\varepsilon_j}}(B_{r_{\varepsilon_j}}(x_{i,j})) = \hat{\mu}_{j}^{r_{\varepsilon_j}}(\partial B_{r_{\varepsilon_j}}(x_{i,j})) = \varepsilon_j \xi_{i,j}.
\end{equation*}
In \cite[Lemma 11]{GaLePo10}, it is also shown that 
\begin{gather}
 \frac{\hat{\mu}_{j}^{r_{\varepsilon_j}}}{\varepsilon_j |\log \varepsilon_j|} \stackrel{*}{\rightharpoonup} R^T\mu \text{ in } \mathcal{M}(\Omega;\R^2), \,
 \frac{\tilde{\mu}_{j}^{r_{\varepsilon_j}}}{\varepsilon_j |\log \varepsilon_j|} \stackrel{*}{\rightharpoonup} R^T\mu \text{ in } L^{\infty}(\Omega;\R^2), \nonumber \\
 \frac{\tilde{\mu}_{j}^{r_{\varepsilon_j}}}{\varepsilon_j |\log \varepsilon_j|} \rightarrow R^T\mu \text{ in } H^{-1}(\Omega;\R^2), \text{ and } 
 \frac{\tilde{\mu}_{j}^{r_{\varepsilon_j}} - \hat{\mu}_{j}^{r_{\varepsilon_j}}}{\varepsilon_j |\log \varepsilon_j|} \rightarrow 0 \text{ in } H^{-1}(\Omega;\R^2). \label{eq: lemma11}
\end{gather}
In order to define the recovery sequence, we introduce the auxiliary strain
 \begin{equation}\label{eq: defK}
  \tilde{K}_{\mu_{j}}^{r_{\varepsilon_j}} = \frac{\varepsilon_j}{r_{\varepsilon_j}^2} \sum_{i=1}^{M_j}   \eta^{j}_i\, |x - x_{i,j}|^2 \, \1_{B_{r_{\varepsilon_j}}(x_{i,j})}.
 \end{equation}
A straightforward calculation shows that $\operatorname{curl} \tilde{K}_{\mu_{j}}^{r_{\varepsilon_j}} = R^T (\tilde{\mu}_{j}^{r_{\varepsilon_j}} - \hat{\mu}_{j}^{r_{\varepsilon_j}})$. \\
Now, we define the approximating strains as
\begin{equation}
 \beta_{j} = R \left( Id + \varepsilon_j |\log \varepsilon_j| \beta + \eta^{j} - \tilde{K}_{\mu_{j}}^{r_{\varepsilon_j}} + \tilde{\beta}_{j} \right),
\end{equation}
where $\tilde{\beta}_{j}= \nabla w_{j}J$ for $  J = 
 \begin{pmatrix}
  0 & -1 \\ 1 & 0
 \end{pmatrix}$
 and $w_{j}$ the solution to
\begin{equation}\label{eq: defw}
 \begin{cases}
  - \Delta w_{j} = \varepsilon_j |\log \varepsilon_j| R^T \mu - R^T \tilde{\mu}^{r_{\varepsilon_j}}_{j} & \text{in } \Omega, \\
  w_ {j} \in H^1_0(\Omega;\R^2).
 \end{cases}
\end{equation}
Then $\beta_{j} \in \mathcal{A}\mathcal{S}_{\varepsilon_j}(\mu_{j})$. 
Indeed, one can show by a direct computation that $\eta^j$ and $K_{\mu_j}^{r_{\varepsilon_j}}$ are in $L^p(\Omega;\R^{2\times2})$; the function $\tilde{\beta}_j$ belongs to the space $L^2(\Omega;\R^{2\times2})$ by definition. 
Hence, each summand in the definition of $\beta_j$  is in the space $L^p(\Omega;\R^{2\times2})$. 
Furthermore,
\begin{equation*}
 \operatorname{curl} {\beta_{j}} = \varepsilon_j |\log \varepsilon_j| \mu + \mu_{j} - \hat{\mu}_{j}^{r_{\varepsilon_j}} - \tilde{\mu}_{j}^{r_{\varepsilon_j}} + \hat{\mu}_{j}^{r_{\varepsilon_j}} - \varepsilon_j |\log \varepsilon_j| \mu + \tilde{\mu}_{j}^{r_{\varepsilon_j}} 
 = \mu_{j}.
\end{equation*}
As in \cite[$\Gamma$-limsup inequality]{MuScZe14}, it can be shown for $\Omega_{\varepsilon_j}(\mu_j) = \Omega \setminus \bigcup_{ x \in \operatorname{supp }(\mu_j)} B_{\varepsilon_j}(x)$ that
\begin{enumerate}
 \item $\frac{\eta^{j}\1_{\Omega_{\varepsilon_j}(\mu_{j})}}{\varepsilon_j |\log \varepsilon_j|} \rightharpoonup 0$ in $L^2(\Omega;\R^{2\times2})$, \label{item: etaweak}
 \item $\frac{\tilde{K}_{\mu_{j}}^{r_{\varepsilon_j}}}{\varepsilon_j |\log \varepsilon_j|} \rightarrow 0$ in $L^2(\Omega;\R^{2\times2})$, \label{item: Kstrong}
 \item $\frac{\tilde{\beta}_{j}}{\varepsilon_j |\log \varepsilon_j|} \rightarrow 0$ in $L^2(\Omega;\R^{2\times2})$. \label{item: tildebetastrong}
\end{enumerate}
The boundedness in $L^2(\Omega;\R^{2\times2})$ of the function in \ref{item: etaweak}.~is a straightforward computation. 
The identification of the weak limit can be done in $L^p(\Omega;\R^{2\times2})$.
For \ref{item: Kstrong}.~notice that $|\tilde{K}_{\mu_{j}}^{r_{\varepsilon_j}}| \leq C \varepsilon_j \sqrt{|\log \varepsilon_j|}$.
In view of \eqref{eq: defw} and \eqref{eq: lemma11}, the last statement follows by classical elliptic estimates. \\
Furthermore, it can be proved that
\begin{enumerate}
\setcounter{enumi}{3}
 \item[4)] $\frac{\eta^{j}}{\varepsilon_j |\log \varepsilon_j|} \rightarrow 0$ in $L^p(\Omega;\R^{2\times2})$. \label{item: etastrong}
\end{enumerate}
In fact,
\begin{align*}
 \int_{\Omega} \left| \frac{\eta^{j}}{\varepsilon_j |\log \varepsilon_j|}\right|^p \, dx &= \frac{1}{|\log \varepsilon_j|^p} \sum_{i=1}^{M_j} \int_{B_{r_{\varepsilon_j}}(x_{i,j})} |\eta^{j}_i|^p \, dx\\
 &\leq \frac{C}{|\log \varepsilon_j|^p} \sum_{i=1}^{M_j} \int_0^{r_{\varepsilon_j}} r^{1 - p} \, dr \\
 &\leq C (2-p)^{-1} \frac{M_j}{|\log \varepsilon_j|^p} r_{\varepsilon_j}^{2 - p} \\
 &\leq  C (2-p)^{-1} |\log \varepsilon_j|^{-p} r_{\varepsilon_j}^{-p} \leq C (2-p)^{-1} |\log \varepsilon_j|^{-\frac p2} \rightarrow 0.
\end{align*}
Hence, $(\mu_{j},\beta_{j})$ converges to $(\mu,\beta,R)$ in the sense of Definition \ref{def: convcrit} with $R_{\varepsilon_j} = R$. \\ \newline
\noindent Next, we will show the $\limsup$-inequality for the energies.
For this purpose, fix $\alpha \in (0,1)$.
We split the energy as follows
\begin{equation} \label{eq: split}
 E_{\varepsilon_j}(\mu_{j},\beta_{j}) = \underbrace{\frac{1}{\varepsilon_j^2 |\log \varepsilon_j|^2} \int_{\Omega_{\varepsilon_j^{\alpha}}(\mu_{j})} W(\beta_{j}) \,dx}_{=: I^1_{\varepsilon_j}} +
 \underbrace{\frac{1}{\varepsilon_j^2 |\log \varepsilon_j|^2} \sum_{i=1}^{M_j} \int_{B_{\varepsilon_j^{\alpha}}(x_{i,j})} W(\beta_{j}) \,dx}_{=: I^2_{\varepsilon_j}}.
\end{equation}
First, we show that
\begin{align}\label{eq: Iepsilon1} 
 \limsup_{\varepsilon_j \to 0} I^1_{\varepsilon_j} \leq &\int_{\Omega} \frac12 \mathcal{C} \beta : \beta \,dx + \alpha \int_{\Omega} \varphi(R,\xi) \, dx  \\
 = &\int_{\Omega} \frac12 \mathcal{C} \beta : \beta \,dx + \alpha \int_{\Omega} \varphi\left(R, \frac{d \mu}{d |\mu|}\right) \, d|\mu|. \nonumber 
\end{align}
Using a second order Taylor expansion and the frame indifference  of the energy density $W$, we obtain similarly to the $\liminf$-inequality, \eqref{eq: liminflast}, that
\begin{align*}
 I^1_{\varepsilon_j} &= \frac{1}{\varepsilon_j^2 |\log \varepsilon_j|^2}  \int_{\Omega_{\varepsilon_j^{\alpha}}(\mu_{j})} \frac12 \mathcal{C}(\varepsilon_j |\log \varepsilon_j| \beta + \eta^{j} - \tilde{K}_{\mu_{j}}^{r_{\varepsilon_j}} + \tilde{\beta}_{j}) 
 :  ( \varepsilon_j |\log \varepsilon_j| \beta + \eta^{j} - \tilde{K}_{\mu_{j}}^{r_{\varepsilon_j}} + \tilde{\beta}_{j}) \, dx \\
 &+ \frac{1}{\varepsilon_j^2 |\log \varepsilon_j|^2} \int_{\Omega_{\varepsilon_j^{\alpha}}(\mu_{j})} \sigma(\varepsilon_j |\log \varepsilon_j| \beta + \eta^{j} - \tilde{K}_{\mu_{j}}^{r_{\varepsilon_j}} + \tilde{\beta}_{j}) \, dx,
\end{align*}
where $\frac{\sigma(F)}{|F|^2} \to 0$ as $F \to 0$.  \\ \\
By \ref{item: etaweak}.-- \ref{item: tildebetastrong}., all mixed terms and the quadratic terms involving $\tilde{K}_{\mu_{j}}^{r_{\varepsilon_j}}$ or $\tilde{\beta}_{j}$ in the first integral vanish in the limit.
In addition, from the non-negativity of $\mathcal{C} = \frac{\partial^2 W}{\partial^2 F}(Id)$ it follows that
\begin{equation*}
 \frac{1}{\varepsilon_j^2 |\log \varepsilon_j|^2}  \int_{\Omega_{\varepsilon_j^{\alpha}}(\mu_{j})} \frac12 \mathcal{C}(\varepsilon_j |\log \varepsilon_j| \beta) : (\varepsilon_j |\log \varepsilon_j| \beta) \, dx \leq
 \int_{\Omega} \frac12 \mathcal{C} \beta: \beta \, dx.
\end{equation*}
Hence, it remains to consider the term involving $\eta^{j}$.
Using the specific form of the $\eta_i^j$ for the second equality and Proposition \ref{prop: convpsi} (in particular \eqref{def: selfenergy})  for the inequality, we find that
\begin{align*}
 &\frac{1}{\varepsilon_j |\log \varepsilon_j|^2} \int_{\Omega_{\varepsilon_j^{\alpha}}(\mu_{j})} \frac12 \mathcal{C} \eta^{j} : \eta^{j} \, dx \\
 = &\frac1{|\log \varepsilon_j|^2} \sum_{i = 1}^{M_j}  \int_{B_{r_{\varepsilon_j}}(x_{i,j}) \setminus B_{\varepsilon_j^{\alpha}}(x_{i,j})}\frac12 \mathcal{C} \eta_i^{j} : \eta_i^{j} \, dx \\
 = &\frac{|\log \frac{\varepsilon_j^{\alpha}}{r_{\varepsilon_j}}|}{|\log \varepsilon_j|^2} \sum_{i = 1}^{M_j} \frac1{|\log \frac{\varepsilon_j^{\alpha}}{r_{\varepsilon_j}}|} \int_{B_{1}(x_{i,j}) \setminus B_{\frac{\varepsilon_j^{\alpha}}{r_{\varepsilon_j}}}(x_{i,j})}\frac12 \mathcal{C} \eta_i^{j} : \eta_i^{j} \, dx \\
 \leq  &\frac{|\log \frac{\varepsilon_j^{\alpha}}{r_{\varepsilon_j}}|}{|\log \varepsilon_j|^2} \sum_{i = 1}^{M_j} (\psi(R^T \xi_{i,j}) + o(1)). \\
 =  &\frac{\alpha + o(1)}{|\log \varepsilon_j|} \sum_{i = 1}^{M_j} (\psi(R^T \xi_{i,j}) + o(1)), \\
 \intertext{where $o(1) \to 0$ as $\varepsilon_j \to 0$ (recall that we deal only with finitely many values of $\xi_{i,j}$). By definition of $\mu_{j}^k$, this equals}
 = &\sum_{k = 1}^{M} (\alpha + o(1)) \frac{|\mu^k_{\varepsilon_j}|(\Omega)}{|\log \varepsilon_j|}   (\psi(R^T \xi_k)+ o(1)).
\end{align*}
Using \eqref{eq: |mu|}, this yields in the limit
\begin{equation*}
 \limsup_{\varepsilon_j \to 0} \frac{1}{\varepsilon_j^2 |\log \varepsilon_j|^2} \int_{\Omega_{\varepsilon_j^{\alpha}}(\mu_{j})} \frac12 \mathcal{C} \eta^{j} : \eta^{j} \, dx \leq 
  \alpha |\Omega| \sum_{k=1}^M \lambda_k \psi(R^T \xi_k) = \alpha \int_{\Omega} \varphi(R,\xi) \, dx.
\end{equation*}
To establish \eqref{eq: Iepsilon1} we still have to show that 
\begin{equation}
 \frac{1}{\varepsilon_j^2 |\log \varepsilon_j|^2} \int_{\Omega_{\varepsilon_j^{\alpha}}(\mu_{j})} \sigma(\varepsilon_j |\log \varepsilon_j| \beta + \eta^{j} - \tilde{K}_{\mu_{j}}^{r_{\varepsilon_j}} + \tilde{\beta}_{j}) \, dx
 \rightarrow 0. \label{eq: sigmavanish}
\end{equation}
First, we observe that for $x \in \Omega_{\varepsilon_j^{\alpha}}(\mu_{j})$ it holds
\begin{gather*}
 |\eta^{j}(x)| \leq  \sup_{i=1, \dots, M_j} \varepsilon_j | \1_{B_{r_{\varepsilon_j}}(x_{i,j})} \eta^{j}_i | \leq C \varepsilon_j^{1-\alpha} \\
 \text{ and } |\tilde{K}_{\mu_{j}}^{r_{\varepsilon_j}}(x)| \leq \frac{\varepsilon_j}{r_{\varepsilon_j}^2}\sup_{i=1, \dots, M_j} | \1_{B_{r_{\varepsilon_j}}(x_{i,j})} \eta^{j}_i\, |x - x_{i,j}|^2 | \leq C \frac{\varepsilon_j}{r_{\varepsilon_j}}.
\end{gather*}
Hence, $\1_{\Omega_{\varepsilon_j^{\alpha}}(\mu_{j})}(\eta^{j} + \tilde{K}_{\mu_{j}}^{r_{\varepsilon_j}})$ converges uniformly to zero. 
To compensate the lack of uniform convergence of $\tilde{\beta}_{j}$ and $\varepsilon_j |\log \varepsilon_j| \beta$, fix $L>0$ and define the set
\begin{equation*}
U^L_{\varepsilon_j} = \left\{ x \in \Omega_{\varepsilon_j^{\alpha}(\mu_{j})}: |\tilde{\beta}_{j}(x)| \leq | \eta^{j}(x) + \tilde{K}_{\mu_{j}}^{r_{\varepsilon_j}}(x)| \text{ and } |\beta(x)| \leq L \right\}.
\end{equation*}
Then $\1_{U^L_{\varepsilon_j}}(\varepsilon_j |\log \varepsilon_j| \beta + \eta^{j} + \tilde{K}_{\mu_{j}}^{r_{\varepsilon_j}} + \tilde{\beta}_{\varepsilon_j})$ converges uniformly to zero.
Set $\omega(t) = \sup_{|F| \leq t} |\sigma(F)|$ and notice that $\frac{\omega(t)}{t^2} \to 0$ as $t \to 0$.
By definition of $\omega$ it holds
\begin{align}
 &\frac{1}{\varepsilon_j^2 |\log \varepsilon_j|^2} \left| \int_{U^L_{\varepsilon_j}} \sigma(\varepsilon_j |\log \varepsilon_j| \beta + \eta^{j} - \tilde{K}_{\mu_{j}}^{r_{\varepsilon_j}} + \tilde{\beta}_{j}) \, dx\right| \nonumber \\
 \leq &\int_{U_{\varepsilon_j}^L} \frac{\omega(\varepsilon_j |\log \varepsilon_j| \beta + \eta^{j} - \tilde{K}_{\mu_{j}}^{r_{\varepsilon_j}} + \tilde{\beta}_{j})}{\left|\varepsilon_j |\log \varepsilon_j| \beta + \eta^{j} - \tilde{K}_{\mu_{j}}^{r_{\varepsilon_j}} + \tilde{\beta}_{j}\right|^2}
 \frac{\left|\varepsilon_j |\log \varepsilon_j| \beta + \eta^{j} - \tilde{K}_{\mu_{j}}^{r_{\varepsilon_j}} + \tilde{\beta}_{j}\right|^2}{\varepsilon_j^2 |\log \varepsilon_j|^2} \, dx \rightarrow 0 \label{eq: uepsilonvanish}
\end{align}
as the first term converges to zero uniformly and the second is bounded in $L^1$ by \ref{item: etaweak}.-- \ref{item: tildebetastrong}. \\
For the integral on $\Omega_{\varepsilon_j^{\alpha}}(\mu_{j}) \setminus U^L_{\varepsilon_j}$, we notice:
\begin{equation*}
 \omega(t) = \sup_{|F| \leq t} |\sigma(F)| =  \sup_{|F| \leq t} \left| W(Id + F) - \frac12 \mathcal{C} F: F \right| \leq C \sup_{|F| \leq t} |F|^2 \leq C t^2.
\end{equation*}
Thus,
\begin{align*}
 &\frac{1}{\varepsilon_j^2 |\log \varepsilon_j|^2} \left|\int_{\Omega_{\varepsilon_j^{\alpha}}(\mu_{j}) \setminus U_{\varepsilon_j}^L} \sigma(\varepsilon_j |\log \varepsilon_j| \beta + \eta^{j} - \tilde{K}_{\mu_{j}}^{r_{\varepsilon_j}} + \tilde{\beta}_{j}) \, dx \right|\\
 \leq & C \int_{\Omega_{\varepsilon_j^{\alpha}}(\mu_{j}) \setminus U^L_{\varepsilon_j}} \frac{\left| \varepsilon_j |\log \varepsilon_j| \beta + \eta^{j} - \tilde{K}_{\mu_{j}}^{r_{\varepsilon_j}} + \tilde{\beta}_{j} \right|^2}{\varepsilon_j^2 |\log \varepsilon_j|^2} \, dx \\
 \leq & C \int_{\{ |\beta| >L \}} \frac{|\tilde{\beta}_{j}|^2}{\varepsilon_j^2 |\log \varepsilon_j|^2} + |\beta|^2 \, dx \stackrel{j\to \infty}{\longrightarrow} \int_{\{|\beta| > L\}} |\beta|^2 \,dx \stackrel{L \to \infty}{\longrightarrow} 0,
\end{align*}
where we used \ref{item: tildebetastrong}.~for the convergence in $j$. 
Hence, we proved \eqref{eq: sigmavanish} which in turn finishes the proof of \eqref{eq: Iepsilon1}. \\ \newline
\noindent Next, we control $I_2^{\varepsilon_j}$ from \eqref{eq: split}.
Notice that
\begin{align*}
 &\frac{1}{\varepsilon_j^2 |\log \varepsilon_j|^2} \sum_{i=1}^{M_j} \int_{B_{\varepsilon_j^{\alpha}}(x_{i,j})} W(\beta_{j}) \,dx \\
 \leq &C \frac{1}{\varepsilon_j^2 |\log \varepsilon_j|^2} \sum_{i=1}^{M_j} \int_{B_{\varepsilon_j^{\alpha}}(x_{i,j})} \operatorname{dist}(\beta_{j},SO(2))^2 \wedge \operatorname{dist}(\beta_{j},SO(2))^p \,dx \\
 \leq &C \frac{1}{\varepsilon_j^2 |\log \varepsilon_j|^2} \sum_{i=1}^{M_j} \int_{B_{\varepsilon_j^{\alpha}}(x_{i,j})} \varepsilon_j^2 |\log \varepsilon_j|^2 |\beta|^2 + |\eta^{j}|^2 \wedge |\eta^{j}|^p  + |\tilde{K}_{\mu_{j}}^{r_{\varepsilon_j}}|^2 + |\tilde{\beta}_{j}|^2 \,dx.
\end{align*}
Due to \ref{item: Kstrong}.~and \ref{item: tildebetastrong}., the terms involving $\tilde{K}_{\mu_{j}}^{r_{\varepsilon_j}}$ and $\tilde{\beta}_{j}$ vanish in the limit.
Moreover, as 
\begin{equation*}
 \mathcal{L}^2\left(\bigcup_{i=1}^{M_j} B_{\varepsilon_j^{\alpha}}(x_{i,j})\right) = M_j \pi \varepsilon_j^{2{\alpha}} \leq C |\log \varepsilon_j| \varepsilon_j^{2{\alpha}} \to 0,
\end{equation*}
also the term involving $\beta$ vanishes in the limit.
Eventually, we estimate
\begin{align*}
 &\frac{1}{\varepsilon_j^2 |\log \varepsilon_j|^2} \sum_{i=1}^{M_j} \int_{B_{\varepsilon_j^{\alpha}}(x_{i,j})} |\eta^{j}|^2 \wedge |\eta^{j}|^p \, dx \\
 \leq &\frac{C}{\varepsilon_j^{2} |\log \varepsilon_j|^2} \sum_{i=1}^{M_j} \left(\int_{B_{\varepsilon_j^{\alpha}}(x_{i,j}) \setminus B_{\varepsilon_j}(x_{i,j})} |\eta^{j}|^2 \, dx
 + \int_{B_{\varepsilon_j}(x_{i,j})} |\eta^{j}|^p \, dx\right) \\
 \leq &\frac{C M_j}{|\log \varepsilon_j|^2} \int_{\varepsilon_j}^{\varepsilon_j^{\alpha}} r^{-1} \, dr
 + \frac{C M_j}{\varepsilon_j^{2-p} |\log \varepsilon_j|^2} \int_{0}^{\varepsilon_j} r^{1-p} \, dr \\
 \leq &C (1-{\alpha}) + C |\log \varepsilon_j|^{-1}.
\end{align*}
Hence, $\limsup_{\varepsilon_j \to 0} I_{\varepsilon_j}^2 \leq C(1-{\alpha})$.
Together with \eqref{eq: Iepsilon1}, this implies 
\begin{equation*}
 \limsup_{\varepsilon_j \to 0} E_{\varepsilon_j}(\beta_{j}, \mu_{j}) \leq \int_{\Omega} \frac12 \mathcal{C} \beta : \beta \,dx + \alpha \int_{\Omega} \varphi\left(R, \frac{d \mu}{d |\mu|}\right) + C(1-\alpha).
\end{equation*}
Letting $\alpha \to 1$ finishes step 1. \\
For step 2, it is useful to notice here that $\eta^j = \tilde{K}_{\mu_{j}}^{r_{\varepsilon_j}} = 0$  on $\partial \Omega$ and therefore we find by \eqref{eq: lemma11} and \ref{item: tildebetastrong}.~that (cf.~\cite[Theorem 2]{DaLi88})
\begin{align} \label{eq: tracevanishes}
 \left( \frac{R^T \beta_{j} - Id}{\varepsilon_j |\log \varepsilon_j|} - \beta \right) \cdot \tau &= 
 \left( \frac{\eta^{j} - \tilde{K}_{\mu_{j}}^{r_{\varepsilon_j}} + \tilde{\beta}_{j}}{\varepsilon_j |\log \varepsilon_j|}  \right) \cdot \tau \\ &=  \frac{\tilde{\beta}_{j}}{\varepsilon_j |\log \varepsilon_j|}   \cdot \tau\rightarrow 0 \text{ strongly in } H^{-\frac12}(\partial\Omega;\R^2), \nonumber
\end{align}
where $\tau$ denotes the unit tangent to $\partial \Omega$. \\ \\
\noindent \textbf{Step 2. }\emph{$\mu = \sum_{l=1}^L \xi^l \,d\mathcal{L}^2_{|\Omega^l}$ where $\xi^l \in \R^2$ and $\Omega^l \subseteq \Omega$ are pairwise disjoint Lipschitz-domains such that $\mathcal{L}^2\left( \Omega \setminus \bigcup_{l=1}^L \Omega^l \right) = 0$.} \\
We make use of the recovery sequence of step 1 on each $\Omega^l$. 
For this, we define $\beta^l = \beta \1_{\Omega^l}$ and $\mu^l= \mu_{|\Omega^l}$. 
For each $l=1, \dots , L$ let $(\mu^l_{j},\beta_{j}^l)_j$ be the recovery sequence from step 1 for $( \mu^l,\beta^l, R)$ on $\Omega^l$.
Now, define 
\begin{equation*}
 \tilde{\beta}_{j} = \sum_{l=1}^L \beta_{j}^l \1_{\Omega^l}.
\end{equation*}
Then
\begin{equation*}
 \operatorname{curl} \tilde{\beta}_{j} = \sum_{l=1}^L \mu^l_{j} - (\beta_{j}^l  \cdot \tau_{\partial \Omega^l}) \, d\mathcal{H}^1_{|\del \Omega^l \cap \Omega} \text{ in } \mathcal{D}'(\Omega),
\end{equation*}
where $\tau_{\partial \Omega^l}$ is the positively oriented unit tangent to $\partial \Omega^l$.
Note that for two neighboring regions $\Omega^l$ the corresponding tangents have opposite signs.
By \eqref{eq: tracevanishes}, we find
\begin{align*}
 &\left\|\frac{\operatorname{curl} \tilde{\beta}_{j} - \sum_{l=1}^L \mu^l_{j}}{\varepsilon_j |\log \varepsilon_j|}\right\|_{H^{-1}(\Omega;\R^2)} \\ =
& \left\| \sum_{l=1}^L  \left(\frac{\beta_{j}^l - R}{\varepsilon_j |\log \varepsilon_j|} - R\beta\right)  \cdot \tau_{\partial \Omega^l \cap \Omega} \, d\mathcal{H}^1_{|\del \Omega^l} \right\|_{H^{-1}(\Omega;\R^2)} \\ 
 \leq &C \sum_{l=1}^L \left\| R \left(\frac{R^T\beta_{j}^l - Id}{\varepsilon_j |\log \varepsilon_j|} - \beta\right)  \cdot \tau_{\partial \Omega^l} \right\|_{H^{-\frac12}(\del \Omega^l;\R^2)} \longrightarrow 0 \text{ as } j \to \infty.
\end{align*}
For the last inequality we used that for a Lipschitz domain $U$ the trace space of $H^1(U;\R^2)$ is $H^{\frac12}(\partial U;\R^2)$.
By this estimate in $H^{-1}$, we can find a sequence of functions $f_{j} \in L^2(\Omega;\R^{2\times2})$ such that $\operatorname{curl} f_{j} = \operatorname{curl} \tilde{\beta}_{j} - \sum_{l=1}^L \mu^l_{j}$ and 
$\frac{1}{\varepsilon_j |\log \varepsilon_j|} \| f_{j} \|_{L^2} \rightarrow 0$.
Now, define the recovery sequence as
\begin{equation*}
 \beta_{j} = \tilde{\beta}_{j} - f_{j} \text{ and } \mu_{j} = \sum_{l=1}^L \mu^l_{j}.
\end{equation*}
Then $\mu_{j} \in X_{\varepsilon_j}$ and $\beta_{j} \in \mathcal{A}\mathcal{S}_{\varepsilon_j}(\mu_{j})$.
From the construction of the $\mu_j^l$ in step 1 it follows that $\frac{\mu_j}{ \varepsilon_j |\log \varepsilon_j|} \stackrel{*}{\rightharpoonup} \mu$ in $\mathcal{M}(\Omega;\R^2)$.
Moreover, in the proof of step 1 it can be seen that although we subtract the vanishing sequence $f_{j}$ it still holds for all $l = 1,\dots ,L$
\begin{equation}
 \limsup_{\varepsilon_j \to 0} \frac{1}{\varepsilon_j^2 |\log \varepsilon_j|^2} \int_{\Omega^l} W(\beta_{j})\, dx  \leq \int_{\Omega^l} \mathcal{C} \beta: \beta \, dx + \int_{\Omega^l} \varphi\left(R, \frac{d\mu^l}{d|\mu^l|}\right) \, d|\mu^l|. \label{eq: localizedrec}
\end{equation}
Summing over \eqref{eq: localizedrec} finishes step 2.\\ \\
\noindent \textbf{Step 3. }\emph{$\mu \in H^{-1}(\Omega;\R^2) \cap \mathcal{M}(\Omega;\R^2)$.}\\
As our limit energy is the same, we can argue as in \cite[Theorem 12, step 3]{GaLePo10} to reduce the general case to step 2. 
Let us shortly sketch the argument for the sake of completeness. By reflection arguments and mollification, the authors show that there exists a sequence of smooth functions $\beta_j$ such that $\beta_j \rightarrow \beta$ in $L^2(\Omega;\R^{2\times 2})$, $\operatorname{curl }\beta_j \stackrel{*}{\rightharpoonup} \operatorname{curl }\beta$ in $\mathcal{M}(\Omega;\R^2)$, and $| \operatorname{curl }\beta_j|(\Omega) \rightarrow | \operatorname{curl }\beta|(\Omega)$.
The energy $E^{crit}$ is continuous with respect to this convergence.
Then, the authors carefully approximate $ \operatorname{curl }\beta$ by piecewise constant functions and correct the corresponding error in the $\operatorname{curl}$ by a vanishing sequence in $L^2(\Omega;\R^{2\times2})$.
\end{proof}

\begin{remark}
In \cite{MuScZe15}, the authors construct a recovery sequence $\beta_j$ which fulfills $\operatorname{det } \beta_j > 0$. 
This construction could also be used in our case. 
Most computations in the proof would remain the same.
Using this construction, we could weaken our assumptions on $W$ in the sense that we would need the upper bound $W(F) \leq C \operatorname{dist}(F,SO(2))^2 \wedge \operatorname{dist}(F,SO(2))^p$ only for $F$ such that $\operatorname{det}(F) > 0$.
\end{remark}

\section*{Acknowledgments}
The author is very grateful to Stefan M\"uller and Sergio Conti for bringing the problem to his attention and many fruitful discussions.  

\bibliographystyle{abbrv}
\bibliography{mixed-growth-arxiv}
\end{document}